\def \2{\^{a}}
\def \1{\^{i}}
\newcommand{\C}{{\mathbb  C}}
\numberwithin{equation}{section}
\newtheorem{thm}{\bf Theorem}[section]
\newtheorem{lem}[thm]{\bf Lemma}
\newtheorem{cor}[thm]{\bf Corollary}
\newtheorem{defn}{\bf Definition}[section]
\theoremstyle{remark}
\newtheorem{rem}{\bf Remark}[section]
\newtheorem{exmp}{\bf Example}[section]
\begin{document}

\pagenumbering{arabic}

\author{Dan Com\u anescu\\
{\small Department of Mathematics, West University of Timi\c soara}\\
{\small Bd. V. P\^ arvan, No 4, 300223 Timi\c soara, Rom\^ ania}\\
{\small E-mail address: dan.comanescu@e-uvt.ro}}
\title{\textbf {Linear matrix equations with parameters forming a commuting set of diagonalizable matrices}}
\date{}
\maketitle

\begin{abstract}
 We prove that the following statements are equivalent: a linear matrix equation with parameters forming a commuting set of diagonalizable matrices is consistent, a certain matrix constructed with the Drazin inverse is a solution of this matrix equation, the attached standard linear matrix equation is consistent. The number of zero components of a given matrix (the relevant matrix) gives us the dimension of the affine space of solutions. We apply this theoretical results to the Sylvester equation, the Stein equation, and the Lyapunov equation. We present a description of the set of the diagonalizing matrices for a commuting sequence of diagonalizable matrices.
 \end{abstract}

\noindent {\bf Keywords:} linear matrix equation, Sylvester equation, Stein equation, Lyapunov equation, diagonalizable matrix, normal matrix, diagonalizing matrix, Drazin inverse, Moore-Penrose inverse \\
{\bf MSC Subject Classification 2020:} 15A06, 15A09, 15A10, 15A20, 15A24, 15A27.

\section{Introduction}

A significant number of real-world phenomena can be represented as mathematical models whose study includes the determination of solutions of a linear matrix equation. In the second half of the 20th century, significant efforts were made to study the continuous Lyapunov equation and the discrete Lyapunov equation. These appear with the use of Lyapunov's second method in control theory (see \cite{parks}, \cite{gajic}). Sylvester's equation generalizes the continuous Lyapunov's equation. This is named after the mathematician J. J. Sylvester who at the end of the 19th century, in the paper \cite{sylvester}, studied the homogeneous case. The discrete Lyapunov equation is a special case of the Stein equation. Due to the ever-increasing dimensions of the matrices involved in the matrix equations, it was necessary to find some numerical algorithms that would allow the determination or approximation of the solutions of these equations. The presentation of such numerical methods can be found in the papers \cite{golub}, \cite{simoncini}, \cite{jbilou}, \cite{zhang}. 

All matrix equations shown above (with square matrices) are particular cases of a linear matrix equation of the following form
\begin{equation}\label{linear-matrix-equation}
\sum_{j=1}^kA_jXB_j=C,
\end{equation}
with the unknown matrix $X\in \mathcal{M}_n(\C)$ and the parameters $k\in \mathbb{N}^*$ and $A_1, \dots, A_k$, $B_1, \dots, B_k$, $C\in \mathcal{M}_n(\C)$. 

Although in particular situations solutions have been presented, see \cite{lancaster}, in the general case solving this equation is extremely difficult.

In this paper we study the matrix equation \eqref{linear-matrix-equation} in the situation where $\mathcal{N}=\{A_1, \dots, A_k, B_1, \dots, B_k, C\}$ is a commuting set of diagonalizable matrices. Our study starts from the observation that $\mathcal{N}$ is a simultaneously diagonalizable set. It is obvious that, in this case, the set $\mathfrak{S}(\mathcal{N})$ of all diagonalizing matrices for $\mathcal{N}$ is important.  

Section \ref{main-section} presents the main theoretical results. Using the eigenvalues of the matrices $A_1$, ..., $A_k$, $B_1$, ..., $B_k$ we build $\Gamma^S$, the so-called relevant matrix with respect to $S\in\mathfrak{S}(\mathcal{N})$, and we prove that any other relevant matrix permutes the rows and the columns of $\Gamma^S$ in the same way. The number of components equal to zero of the relevant matrix gives us the dimension of the affine space of the solutions of \eqref{linear-matrix-equation}. When we know the matrix $S\in \mathfrak{S}(\mathcal{N})$ we can easily calculate the relevant matrix. Since specifying a matrix $S\in \mathfrak{S}(N)$ can be difficult, it is important to study the situations where we can compute a relevant matrix without a prior computation of the matrix $S$. 
The consistency of the linear matrix equation \eqref{linear-matrix-equation} is equivalent to the consistency of the attached standard linear matrix equation \eqref{standard-equation}. 
It is also equivalent with the fact that the matrix constructed in \eqref{X-hat-99}, using the Drazin inverse of a matrix, is a solution of the equation  \eqref{linear-matrix-equation} or of the equation \eqref{standard-equation}. At the end of this section we consider the special case where all matrices in $\mathcal{N}$ are normal matrices.  

In Section \ref{section-Sylvester}  are adapted the results from Section \ref{main-section} to the particular cases of Sylvester equation respectively continuous Lyapunov equation. Section \ref{section-Stein} applies the results of  Section \ref{main-section} to the Stein equation, respectively the discrete Lyapunov equation. 

In Section \ref{diagonalizing-matrices-654} is made a study of the diagonalizing matrices for a commuting sequence of diagonalizable matrices $\mathcal{M}$. 
At the end of this paper we present some examples and we mention some notions and results about matrix theory. 

The importance of linear matrix equations led to the study of some of their generalizations. The paper \cite{sari} study the discrete Lyapunov equation in a semiring. A generalization of the above linear matrix equation to a tensor equation is shown in \cite{xu}. The paper \cite{gil} is devoted to the discrete Lyapunov equation in a Hilbert space.

\section{Main results}\label{main-section}

In this section we study the solutions of the linear matrix equation \eqref{linear-matrix-equation} when $\mathcal{N}=\{A_1, \dots, A_k, B_1, \dots, B_k, C\}$ is a commuting set of diagonalizable matrices; for all $j,l\in \{1,\dots, k\}$ we have $A_jB_l=B_lA_j$, $A_jC=CA_j$, $B_lC=CB_l$.

First, we present some notions and results about the diagonalizable matrices.  $M\in \mathcal{M}_n(\C)$ is a diagonalizable matrix if there exists\footnote{$\mathcal{M}^{\text{inv}}_n(\C)$ is the subset of $\mathcal{M}_n(\C)$ formed with the invertible matrices.} $S\in \mathcal{M}^{\text{inv}}_n(\C)$  such that $S^{-1}MS$ is a diagonal matrix. The matrix $S$ is called a diagonalizing matrix for $M$. The set of all diagonalizing matrices for $M$ is noted by $\mathfrak{S}(M)$. For the diagonalizable matrix $M\in \mathcal{M}_n(\C)$ we consider the function $\Delta_M:\mathfrak{S}(M)\to \C^n$ given by\footnote{If ${\bf m}=\text{vec}(m_1, \dots, m_n)\in \C^n$, then $\text{diag}({\bf m})\in \mathcal{M}_n(\C)$ has $m_1,\dots, m_n$ on the main diagonal and the entries outside the main diagonal are all zero.} 
$\Delta_M(S)={\bf m}$, where $\text{diag}({\bf m})=S^{-1}MS$.
We say that the vector ${\bf m}$ is induced by the diagonalizable matrix $M$.  
In what follows we will use the following notations. 
\begin{defn} Let be ${\bf m}\in \C^n$ and the permutation $\sigma\in \mathcal{S}_n$.
\begin{enumerate}[(i)]
\item If ${\bf m}=\emph{vec}(m_1,\dots,m_n)$, then  ${\bf m}_{\sigma}=\emph{vec}(m_{\sigma(1)},\dots, m_{\sigma(n)})\in \C^n$.

\item ${\bf m}$ is a star vector if it has the form\footnote{${\bf 1}_m=(\stackrel{m\,-\text{times}}{\overbrace{1,1,\dots,1}})$.} ${\bf m}=\emph{vec}(\lambda_1 {\bf 1}_{k_1}, \dots, \lambda_d {\bf 1}_{k_d})$, where $\lambda_1,\dots,\lambda_d$ are different complex numbers and $k_1+ \dots +k_d=n$. 
\end{enumerate}
\end{defn}
\noindent Using these notations we have $\text{diag}({\bf m}_{\sigma})=P_{\sigma}^{-1}\text{diag}({\bf m})P_{\sigma}$, where 
$P_{\sigma}$ is the permutation matrix generated by the permutation $\sigma\in \mathcal{S}_n$ (see Appendix \ref{permutation-matrix-22}). We present some useful results related to the above notions.

\begin{lem}\label{U-P-234}
Let $M\in \mathcal{M}_n(\C)$ be a diagonalizable matrix and ${\bf m}=\Delta_M(S)$ with $S\in \mathfrak{S}(M)$. The following statements hold.
\begin{enumerate}[(i)] 
\item $SP_{\sigma}\in \mathfrak{S}(M)$ and $\Delta_M(SP_{\sigma})={\bf m}_{\sigma}$, where $\sigma\in \mathcal{S}_n$.
\item $\Delta_M(\mathfrak{S}(M))=\{{\bf m}_{\sigma}\,|\,\sigma\in \mathcal{S}_n\}.$
\item If $\lambda_1, \dots, \lambda_d$ are the distinct eigenvalues of $M$ and $k_1, \dots, k_d$ are their algebraic multiplicity, then ${\bf m}=\emph{vec}(\lambda_1 {\bf 1}_{k_1}, \dots, \lambda_d {\bf 1}_{k_d})\in \C^n$ is a star vector induced by $M$.
\end{enumerate}
\end{lem}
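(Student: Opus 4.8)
The plan is to treat the three items in turn, with (i) doing almost all of the work and (ii)--(iii) then following by short arguments. For (i), I would simply compute, using the displayed identity $\text{diag}({\bf m}_{\sigma})=P_{\sigma}^{-1}\text{diag}({\bf m})P_{\sigma}$ recorded just before the lemma, that
\[
(SP_{\sigma})^{-1}M(SP_{\sigma})=P_{\sigma}^{-1}\bigl(S^{-1}MS\bigr)P_{\sigma}=P_{\sigma}^{-1}\text{diag}({\bf m})P_{\sigma}=\text{diag}({\bf m}_{\sigma}).
\]
Since $SP_{\sigma}$ is invertible (a product of invertible matrices) and the right-hand side is diagonal, this shows $SP_{\sigma}\in\mathfrak{S}(M)$ and, reading off the definition of $\Delta_M$, that $\Delta_M(SP_{\sigma})={\bf m}_{\sigma}$.

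For (ii), the inclusion $\{{\bf m}_{\sigma}\mid\sigma\in\mathcal{S}_n\}\subseteq\Delta_M(\mathfrak{S}(M))$ is immediate from (i). For the reverse inclusion, let $T\in\mathfrak{S}(M)$ and put ${\bf t}=\Delta_M(T)$, so $\text{diag}({\bf t})=T^{-1}MT$. The key observation is that $\text{diag}({\bf t})$ and $\text{diag}({\bf m})$ are both similar to $M$, hence to each other, so they share the same characteristic polynomial; since for a diagonal matrix $\text{diag}({\bf v})$ with ${\bf v}=\text{vec}(v_1,\dots,v_n)$ this polynomial is $\prod_{i=1}^{n}(x-v_i)$, the multiset of entries of ${\bf t}$ coincides with the multiset of entries of ${\bf m}$. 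Therefore there exists $\sigma\in\mathcal{S}_n$ with $t_i=m_{\sigma(i)}$ for all $i$, i.e. ${\bf t}={\bf m}_{\sigma}$, which proves the claim.

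For (iii), note first that ${\bf m}=\text{vec}(\lambda_1{\bf 1}_{k_1},\dots,\lambda_d{\bf 1}_{k_d})$ is a star vector by definition, because the $\lambda_j$ are pairwise distinct and, $M$ being diagonalizable, the algebraic multiplicities satisfy $k_1+\dots+k_d=n$. To see that it is induced by $M$, start from any $T\in\mathfrak{S}(M)$ (which exists precisely because $M$ is diagonalizable): by the characteristic-polynomial argument used in (ii), the entries of $\Delta_M(T)$ are exactly the eigenvalues of $M$ counted with algebraic multiplicity, so some permutation $\sigma$ rearranges them into the grouped form, giving $\Delta_M(T)_{\sigma}={\bf m}$; by (i) this equals $\Delta_M(TP_{\sigma})$, so ${\bf m}$ is induced by $M$.

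There is no genuine obstacle here; the only substantive ingredient is the invariance of the characteristic polynomial under similarity, together with its explicit factored form for a diagonal matrix, and everything else is bookkeeping with the permutation-matrix identity from the lines preceding the lemma. The two points that deserve a little care are: in (ii), phrasing the equality of eigenvalue data as an equality of \emph{multisets} (not merely of sets), so that the sought permutation $\sigma$ actually exists; and in (iii), invoking diagonalizability exactly where it is needed, namely to guarantee $\mathfrak{S}(M)\neq\emptyset$ and $\sum_{j=1}^{d}k_j=n$.
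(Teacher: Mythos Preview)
Your proof is correct and follows essentially the same approach as the paper. The paper's own proof of (i) is the identical one-line computation, and for (ii) and (iii) it simply records the observation that every vector induced by $M$ has the eigenvalues of $M$ (with multiplicity) as its components; you have spelled out precisely this observation via the similarity/characteristic-polynomial argument and been careful about the multiset point, which is exactly what is needed.
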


\begin{proof} $(i)$ We have 
$P_{\sigma}^{-1}S^{-1}MSP_{\sigma}=P_{\sigma}^{-1}\text{diag}({\bf m})P_{\sigma}=\text{diag}({\bf m}_{\sigma})$.
For $(ii)$ and $(iii)$ we observe that all vectors that are induced by the diagonalizable matrix $M$ have the eigenvalues of $M$ as components. 
\end{proof}
The structure of $\mathfrak{S}(M)$ can be specified starting with the choice of an induced star vector, see Lemma \ref{horn-1-3-27}.

If $\mathcal{M}$ is a set of diagonalizable matrices, then the set of all diagonalizing matrices for $\mathcal{M}$ is 
$\mathfrak{S}(\mathcal{M})=\bigcap_{M\in \mathcal{M}}\mathfrak{S}(M)$.
$\mathfrak{S}(\mathcal{M})$ is a non-empty set if and only if $\mathcal{M}$ is a commuting set of diagonalizable matrices (\cite{horn}, Theorem 1.3.21).
If $\mathcal{M}:=(M_1, M_2,\dots, M_q)$ is a commuting sequence of diagonalizable matrices, then a sequence of induced vectors by $\mathcal{M}$ has the form $(\Delta_{M_1}(S), \Delta_{M_2}(S), \dots, \Delta_{M_q}(S))\in (\C^n)^q,$ where $S\in \mathfrak{S}(\mathcal{M})$. The structure of $\mathfrak{S}(\mathcal{M})$ is presented in Theorem \ref{description-s(M)-321}.

The standard linear matrix equation attached to \eqref{linear-matrix-equation} is
\begin{equation}\label{standard-equation}
\Big(\sum_{j=1}^kA_jB_j\Big)X=C.
\end{equation}
The above equation has the form \eqref{linear-matrix-equation} and the matrices $C$, $\sum_{j=1}^kA_jB_j$ are commuting diagonalizable matrices.

Returning to the study of equation \eqref{linear-matrix-equation}, from our hypotheses, the matrices in $\mathcal{N}$ are simultaneously diagonalizable. For $S\in \mathfrak{S}(\mathcal{N})$ and $j\in \{1, \dots, k\}$ we make the following notations:
${\bf a}^j=\Delta_{A_j}(S),\,\,{\bf b}^j=\Delta_{B_j}(S),\,\,{\bf c}=\Delta_{C}(S)$.
The components
 $a_1^{j}, \dots, a_n^{j}$ of ${\bf a}^j$ are the eigenvalues of $A_j$, the components $b_1^{j}, \dots, b_n^{j}$ of ${\bf b}^j$ are the eigenvalues of $B_j$, and the components $c_1, \dots, c_n$ of ${\bf c}$ are the eigenvalues of $C$. 

Making the substitution
$Y=S^{-1}XS$
the equation \eqref{linear-matrix-equation} becomes
\begin{equation}\label{linear-matrix-redus}
\sum_{j=1}^k\text{diag}({\bf a}^j)Y\text{diag}({\bf b}^j)=\text{diag}({\bf c}).
\end{equation}
The above equation is a linear matrix equation of the form \eqref{linear-matrix-equation}. The set $\mathcal{N}_d=\{\text{diag}({\bf a}^1), \dots, \text{diag}({\bf a}^k), \text{diag}({\bf b}^1), \dots, \text{diag}({\bf b}^k), \text{diag}({\bf c})\}$ is a commuting set of diagonal matrices.
Using the components of $Y$ the equation \eqref{linear-matrix-redus} becomes\footnote{We use the $\delta$-Kronecker symbol; $\delta_{rs}=\begin{cases} 1 & \text{if}\,\,r=s \\ 0 & \text{otherwise}\end{cases}$.}
\begin{equation}\label{linear-matrix-redus-components}
\Gamma^S_{rs} y_{rs}=c_r\delta_{rs}, \,\,\,r,s\in \{1,\dots,n\},
\end{equation}
where $\Gamma^S_{rs}:=\sum_{j=1}^k a_r^{j}b_s^{j}$ and they are the components of the relevant matrix of \eqref{linear-matrix-equation} with respect to $S$\footnote{${\bf 1}_{n\times n}\in \mathcal{M}_n(\C)$ is the $n\times n$ all-ones matrix.}: 
\begin{equation}\label{Gamma-def-11}
\Gamma^S=\sum_{j=1}^k{\bf a}^j ({\bf b}^j)^T=\sum_{j=1}^k\text{diag}({\bf a}^j){\bf 1}_{n\times n}\text{diag}({\bf b}^j).
\end{equation}

\begin{lem}
If $V\in \mathfrak{S}(\mathcal{N})$, then there is $\sigma\in \mathcal{S}_n$ such that $\Gamma^V=P_{\sigma}^{-1}\Gamma^SP_{\sigma}$.
\end{lem}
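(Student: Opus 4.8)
The plan is to show that any $V\in\mathfrak{S}(\mathcal{N})$ reorders the induced eigenvalue vectors of \emph{all} the matrices of $\mathcal{N}$ by one and the same permutation: there is $\sigma\in\mathcal{S}_n$ with $\Delta_M(V)=\big(\Delta_M(S)\big)_\sigma$ for every $M\in\mathcal{N}$, in particular for $M=A_j$ and $M=B_j$, $j\in\{1,\dots,k\}$. Granting this, the assertion follows from a short computation with permutation matrices: by \eqref{Gamma-def-11}, the identity $\text{diag}({\bf m}_\sigma)=P_\sigma^{-1}\text{diag}({\bf m})P_\sigma$, and the equality $P_\sigma^{-1}{\bf 1}_{n\times n}P_\sigma={\bf 1}_{n\times n}$ (a permutation matrix fixes the all-ones matrix under conjugation), one gets
\begin{align*}
\Gamma^V&=\sum_{j=1}^k\text{diag}\big((\Delta_{A_j}(S))_\sigma\big){\bf 1}_{n\times n}\text{diag}\big((\Delta_{B_j}(S))_\sigma\big)\\
&=P_\sigma^{-1}\Big(\sum_{j=1}^k\text{diag}(\Delta_{A_j}(S)){\bf 1}_{n\times n}\text{diag}(\Delta_{B_j}(S))\Big)P_\sigma=P_\sigma^{-1}\Gamma^SP_\sigma.
\end{align*}

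To obtain the common permutation, I would associate to each column index $r$ of a diagonalizing matrix $S\in\mathfrak{S}(\mathcal{N})$ the tuple of eigenvalues it carries, $\tau_S(r):=\big(\Delta_M(S)_r\big)_{M\in\mathcal{N}}$; the $r$-th column $s_r$ of $S$ is then a common eigenvector lying in the joint eigenspace $E_{\tau_S(r)}:=\bigcap_{M\in\mathcal{N}}\ker\big(M-\Delta_M(S)_r I\big)$. Since $\mathcal{N}$ is a finite commuting set of diagonalizable matrices, $\C^n$ is the direct sum of the finitely many distinct nonzero joint eigenspaces $E_\tau$; as $\{s_1,\dots,s_n\}$ is a basis of $\C^n$ with $s_r\in E_{\tau_S(r)}$, the columns sharing a fixed tuple $\tau$ form a basis of $E_\tau$, so the number of $r$ with $\tau_S(r)=\tau$ equals $\dim E_\tau$ — a quantity not depending on the choice of $S$. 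Consequently the multiset $\{\tau_S(1),\dots,\tau_S(n)\}$ is the same for every $S\in\mathfrak{S}(\mathcal{N})$; matching it for $S$ and for $V$ produces $\sigma\in\mathcal{S}_n$ with $\tau_V(r)=\tau_S(\sigma(r))$ for all $r$, that is $\Delta_M(V)_r=\Delta_M(S)_{\sigma(r)}$ for every $M\in\mathcal{N}$ and every $r$, which is exactly $\Delta_M(V)=\big(\Delta_M(S)\big)_\sigma$.

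I expect the only delicate point to be this bookkeeping with joint eigenspaces, namely that a basis of $\C^n$ each of whose vectors lies in a single summand of a direct-sum decomposition splits into bases of the summands, so that the column-count of a fixed eigenvalue tuple is the intrinsic dimension $\dim E_\tau$. (Equivalently, and perhaps more in the spirit of the preceding material, one can choose generic scalars $t_M$ so that the eigenvalues of $N:=\sum_{M\in\mathcal{N}}t_M M$ separate the distinct tuples $\tau$; then $S,V\in\mathfrak{S}(N)$, Lemma \ref{U-P-234}$(ii)$ gives $\sigma$ with $\Delta_N(V)=(\Delta_N(S))_\sigma$, and each $\Delta_M(\cdot)_r$ is a fixed function of $\Delta_N(\cdot)_r$, so the \emph{same} $\sigma$ works for all $M\in\mathcal{N}$.) Everything after that — transporting $\sigma$ through the rank-one-sum formula \eqref{Gamma-def-11} — is routine.
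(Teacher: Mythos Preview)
Your argument is correct, and the final permutation-matrix computation matches the paper's verbatim. The difference lies in how the common permutation $\sigma$ is obtained. The paper simply invokes Theorem~\ref{permutation-vectors-221} (proved later, in Section~\ref{diagonalizing-matrices-654}, via the machinery of star sequences of induced vectors and the structure of $\mathfrak{S}(\mathcal{M})$), whereas you give a self-contained argument: either by counting columns in each joint eigenspace $E_\tau$ to see that the multiset of eigenvalue-tuples is intrinsic, or by passing to a generic linear combination $N=\sum t_M M$ whose eigenvalues separate the tuples and applying Lemma~\ref{U-P-234}. Your route is more elementary and avoids the forward reference; the paper's route, by contrast, develops Theorem~\ref{permutation-vectors-221} as part of a broader description of $\mathfrak{S}(\mathcal{M})$ that is reused elsewhere (e.g.\ in Theorems~\ref{Sylvester-thm-88} and~\ref{Stein-thm-878}). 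Both arrive at the same $\sigma$ and the same one-line conjugation identity for $\Gamma^V$.
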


\begin{proof} Let be ${\bf a}^{jV}=\Delta_{A_j}(V)$  and ${\bf b}^{jV}=\Delta_{B_j}(V)$. 
From Theorem \ref{permutation-vectors-221} we have a permutation $\sigma\in \mathcal{S}_n$ such that ${\bf a}^{jV}={\bf a}^j_{\sigma}$ and ${\bf b}^{jV}={\bf b}^j_{\sigma}$, $\forall j\in \{1,\dots, k\}$. Using \eqref{Gamma-def-11} and the  equality $P_{\sigma}{\bf 1}_{n\times n}P_{\sigma}^{-1}={\bf 1}_{n\times n}$ we can write:

\noindent $\Gamma^V  = \sum\limits_{j=1}^k\text{diag}({\bf a}^{jV}){\bf 1}_{n\times n}\text{diag}({\bf b}^{jV}) 
 = \sum\limits_{j=1}^kP_{\sigma}^{-1}\text{diag}({\bf a}^j)P_{\sigma}{\bf 1}_{n\times n}P_{\sigma}^{-1}\text{diag}({\bf b}^j)P_{\sigma}$
$= P_{\sigma}^{-1}\Gamma^SP_{\sigma}.$
\end{proof}
An elementary analysis of the equation \eqref{linear-matrix-redus-components} lead us to the following result in the homogeneous case.

\begin{lem}\label{homogeneous-solutions-44}
We assume that $C=O_n$, $\mathcal{N}$ is a commuting set of diagonalizable matrices, and $S\in \mathfrak{S}(\mathcal{N})$. The following statements hold.
\begin{enumerate}[(i)]
\item The vectorial space
$\mathcal{X}_{h}=S\{Y=[y_{rs}]\in \mathcal{M}_n(\C)\,|\,y_{rs}=0\,\,\text{if}\,\,\Gamma^S_{rs}\neq 0\}S^{-1}\subset  \mathcal{M}_n(\C)$ is the set of solutions of \eqref{linear-matrix-equation} in the homogeneous case.
Its dimension is equal to the number of zeros of the relevant matrix.
\item If all off-diagonal elements of the relevant matrix are non-zero, then $\mathcal{N}\cup \mathcal{X}_h$ is a commuting set of diagonalizable matrices and all matrices in $\mathcal{X}_{h}$ are solutions of the standard linear matrix equation \eqref{standard-equation}.
\end{enumerate}
\end{lem}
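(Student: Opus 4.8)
The plan is to derive both parts directly from the reduced componentwise equation \eqref{linear-matrix-redus-components}, which becomes available after the substitution $Y = S^{-1}XS$.

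For part (i), I would note that $C = O_n$ has induced vector $\mathbf{c} = \mathbf{0}$, so \eqref{linear-matrix-redus-components} reduces to the decoupled system $\Gamma^S_{rs}y_{rs} = 0$ for all $r,s$. Hence $Y = [y_{rs}]$ solves \eqref{linear-matrix-redus} if and only if $y_{rs} = 0$ for every $(r,s)$ with $\Gamma^S_{rs}\neq 0$; that is, the solution set of \eqref{linear-matrix-redus} is the coordinate subspace $\mathcal{Y}_h := \{Y\,|\,y_{rs}=0 \text{ if } \Gamma^S_{rs}\neq 0\}$, of dimension equal to the number of zero entries of $\Gamma^S$. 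Since $X\mapsto S^{-1}XS$ is a linear isomorphism of $\mathcal{M}_n(\C)$ carrying solutions of \eqref{linear-matrix-equation} (with $C=O_n$) bijectively onto solutions of \eqref{linear-matrix-redus}, its inverse sends $\mathcal{Y}_h$ onto $\mathcal{X}_h$, which is therefore a subspace of $\mathcal{M}_n(\C)$ of the same dimension. This step is pure definition-chasing, so I expect no obstacle.

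For part (ii), the point is that the hypothesis ``all off-diagonal entries of $\Gamma^S$ are nonzero'' forces every $Y\in\mathcal{Y}_h$ to have $y_{rs}=0$ for all $r\neq s$, i.e.\ to be diagonal. Therefore each $X\in\mathcal{X}_h$ is of the form $SYS^{-1}$ with $Y$ diagonal, so $S\in\mathfrak{S}(X)$; together with $S\in\mathfrak{S}(\mathcal{N})$ this shows every matrix of $\mathcal{N}\cup\mathcal{X}_h$ is diagonalized by the single invertible matrix $S$. Such a family pairwise commutes and consists of diagonalizable matrices, hence $\mathcal{N}\cup\mathcal{X}_h$ is a commuting set of diagonalizable matrices (indeed simultaneously diagonalizable, cf.\ (\cite{horn}, Theorem 1.3.21)).

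It remains to check that each $X\in\mathcal{X}_h$ solves \eqref{standard-equation}. Conjugation by $S$ turns $\sum_{j=1}^k A_jB_j$ into $\sum_{j=1}^k\text{diag}(\mathbf{a}^j)\text{diag}(\mathbf{b}^j) = \text{diag}(\Gamma^S_{11},\dots,\Gamma^S_{nn})$ by \eqref{Gamma-def-11}, so for $X = SYS^{-1}$ (with $Y$ diagonal) we get $\big(\sum_j A_jB_j\big)X = S\,\text{diag}(\Gamma^S_{11},\dots,\Gamma^S_{nn})\,Y\,S^{-1}$, a matrix whose off-diagonal entries vanish and whose $(r,r)$-entry is $\Gamma^S_{rr}y_{rr}$; since $Y\in\mathcal{Y}_h$ forces $y_{rr}=0$ whenever $\Gamma^S_{rr}\neq 0$, every such entry is $0$, giving $\big(\sum_j A_jB_j\big)X = O_n = C$. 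I do not expect any genuine obstacle; the one thing to be careful about is to invoke the off-diagonal hypothesis correctly — it is what collapses $Y$ to diagonal form and thereby reduces the standard-equation verification to the diagonal entries $\Gamma^S_{rr}$, which are exactly the ones controlled by the defining condition of $\mathcal{Y}_h$.
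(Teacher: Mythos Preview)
Your proposal is correct and follows essentially the same approach as the paper: part (i) is exactly the ``elementary analysis of \eqref{linear-matrix-redus-components}'' the paper alludes to, and for part (ii) the paper's one-line proof simply records that $S\in\mathfrak{S}(\mathcal{N}\cup\mathcal{X}_h)$, which is precisely your observation that the off-diagonal hypothesis forces each $Y\in\mathcal{Y}_h$ to be diagonal. The only minor difference is that you verify the standard-equation claim by a direct diagonal computation, whereas the paper leaves it implicit (once $X$ commutes with every $A_j,B_j$, one has $\sum_j A_jXB_j=(\sum_j A_jB_j)X$, and the left side is $O_n$ since $X\in\mathcal{X}_h$); both routes are fine.
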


\begin{proof}
$(ii)$ We observe that $S\in \mathfrak{S}(\mathcal{N}\cup \mathcal{X}_h)$.
\end{proof}

To study the non-homogeneous case, using the Drazin inverse of a matrix (see Appendix \ref{Moore-Penrose-prop}), we introduce the matrix 
 \begin{equation}\label{X-hat-99}
\widehat{X}:=\Big(\sum_{j=1}^kA_jB_j\Big)^{\bf D}C.
\end{equation} 
\begin{lem}\label{w-X-commute-22}
If $\mathcal{N}$ is a commuting set of diagonalizable matrices, then $\widehat{X}$ is diagonalizable matrix that commutes with all matrices in $\mathcal{N}$.
\end{lem}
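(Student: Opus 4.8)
The plan is to exploit the simultaneous diagonalizability of $\mathcal{N}$ and to show that $\widehat{X}$ is diagonalized by the very same matrix $S\in\mathfrak{S}(\mathcal{N})$ that diagonalizes every member of $\mathcal{N}$. Fix $S\in\mathfrak{S}(\mathcal{N})$, which is non-empty by \cite{horn}, Theorem 1.3.21, since $\mathcal{N}$ is a commuting set of diagonalizable matrices. Using the notation already introduced, write ${\bf a}^j=\Delta_{A_j}(S)$, ${\bf b}^j=\Delta_{B_j}(S)$, and ${\bf c}=\Delta_C(S)$, so that $S^{-1}A_jS=\text{diag}({\bf a}^j)$, $S^{-1}B_jS=\text{diag}({\bf b}^j)$, and $S^{-1}CS=\text{diag}({\bf c})$.

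First I would conjugate the matrix $W:=\sum_{j=1}^kA_jB_j$ by $S$. Since the $A_j$ and $B_j$ commute and are simultaneously diagonalized by $S$, we get $S^{-1}WS=\sum_{j=1}^k\text{diag}({\bf a}^j)\text{diag}({\bf b}^j)=\text{diag}\big(\text{vec}(\Gamma^S_{11},\dots,\Gamma^S_{nn})\big)$, a diagonal matrix whose diagonal entries are exactly the diagonal entries $\Gamma^S_{rr}=\sum_{j=1}^ka^j_rb^j_r$ of the relevant matrix. The key step is then to use the fact that the Drazin inverse commutes with similarity transformations and is computed entrywise on diagonal matrices: $(S^{-1}WS)^{\bf D}=S^{-1}W^{\bf D}S$, and for a diagonal matrix the Drazin inverse simply inverts the nonzero diagonal entries and leaves the zero ones as zero. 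Hence $S^{-1}W^{\bf D}S=\text{diag}\big(\text{vec}(\gamma_1,\dots,\gamma_n)\big)$ where $\gamma_r=1/\Gamma^S_{rr}$ if $\Gamma^S_{rr}\neq0$ and $\gamma_r=0$ otherwise. These facts about the Drazin inverse are the ones I would cite from Appendix \ref{Moore-Penrose-prop}.

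It then follows immediately that $S^{-1}\widehat{X}S=S^{-1}W^{\bf D}S\cdot S^{-1}CS=\text{diag}\big(\text{vec}(\gamma_1,\dots,\gamma_n)\big)\,\text{diag}({\bf c})=\text{diag}\big(\text{vec}(\gamma_1c_1,\dots,\gamma_nc_n)\big)$, which is diagonal. This shows at once that $\widehat{X}$ is diagonalizable with $S$ as a diagonalizing matrix, i.e. $S\in\mathfrak{S}(\widehat{X})$. Finally, to see that $\widehat{X}$ commutes with every $M\in\mathcal{N}$, note that both $S^{-1}\widehat{X}S$ and $S^{-1}MS$ are diagonal, so they commute; conjugating back by $S$ gives $\widehat{X}M=M\widehat{X}$.

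I do not expect a serious obstacle here; the only point requiring a little care is the justification that the Drazin inverse is compatible with similarity and acts entrywise on diagonal matrices — both are standard and should be available from the appendix referenced as Appendix \ref{Moore-Penrose-prop}. One could alternatively avoid the appendix by observing that $W$ itself is diagonalizable (being a polynomial-free sum of commuting diagonalizable matrices, all diagonalized by $S$), so that $W^{\bf D}$ coincides with the group inverse and can be written as a polynomial in $W$; then $W^{\bf D}$ automatically commutes with anything that commutes with $W$, and in particular with every matrix in $\mathcal{N}$ and with $C$, giving the commutation of $\widehat{X}=W^{\bf D}C$ with $\mathcal{N}$ directly, while diagonalizability of $\widehat{X}$ follows from the explicit diagonal form above.
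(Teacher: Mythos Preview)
Your proposal is correct and follows essentially the same argument as the paper: both show that $S^{-1}\widehat{X}S$ is the diagonal matrix $\text{diag}\big((\Gamma^S_{11})^{\boldsymbol{\dagger}}c_1,\dots,(\Gamma^S_{nn})^{\boldsymbol{\dagger}}c_n\big)$ by using the compatibility of the Drazin inverse with similarity and its entrywise action on diagonal matrices (Appendix \ref{Moore-Penrose-prop}), from which diagonalizability and commutation with $\mathcal{N}$ are immediate. Your additional remark about the group inverse being a polynomial in $W$ is a nice aside but not needed for the argument.
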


\begin{proof}
Using the properties of Drazin inverse (see Appendix \ref{Moore-Penrose-prop}), we have\footnote{If ${\bf a}=\text{vec}(a_1, \dots, a_n)\in \C^n$ and ${\bf b}=\text{vec}(b_1, \dots, b_n)\in \C^n$, then the Hadamard product of ${\bf a}$ and ${\bf b}$ is ${\bf a}\circ {\bf b}=\text{vec}(a_1b_1, \dots, a_nb_n)\in \C^n$.

If $a\in \C$, then
$a^{\boldsymbol{\dagger}}=\begin{cases}
\frac{1}{a} & \text{if}\,\,a\neq 0 \\
0 & \text{if}\,\,a=0.
\end{cases}$  is its generalized inverse.
}:

$\widehat{X} = \big(S\big(\sum\limits_{j=1}^k\text{diag}({\bf a}^j)\text{diag}({\bf b}^j)\big)S^{-1}\big)^{\bf D}S\text{diag}({\bf c})S^{-1}$

\noindent $= S \big(\big(\text{diag}\big(\sum\limits_{j=1}^k{\bf a}^j\circ {\bf b}^j\big)\big)^{\bf D}\text{diag}({\bf c})\big)S^{-1} 
= S \text{diag}\big(\big(\Gamma^S_{11}\big)^{\boldsymbol{\dagger}}c_1, \dots, \big(\Gamma^S_{nn}\big)^{\boldsymbol{\dagger}}c_n\big) S^{-1}.$
\end{proof}

From \eqref{linear-matrix-redus-components} we obtain the following necessary and sufficient conditions for the consistency of the linear matrix equation \eqref{linear-matrix-equation}.
\begin{thm}\label{case-consistent-54}
If $\mathcal{N}$ is a commuting set of diagonalizable matrices, then the following statements are equivalent.
\begin{enumerate}[(i)]
\item The linear matrix equation \eqref{linear-matrix-equation} is consistent. 
\item For all $r\in \{1,\dots, n\}$ we have $\Gamma^S_{rr}\neq 0$ or $c_r=0$. 
 \item $\widehat{X}$ is a solution of the linear matrix equation \eqref{linear-matrix-equation}.
 \item The standard linear matrix equation \eqref{standard-equation} is consistent.
 \item $\widehat{X}$ is a solution of the standard linear matrix equation \eqref{standard-equation}.
\end{enumerate}
\end{thm}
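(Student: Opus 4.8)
The plan is to reduce the whole statement to an entrywise analysis of the scalar equations \eqref{linear-matrix-redus-components}, and then to obtain the clauses concerning \eqref{standard-equation} for free, by observing that \eqref{standard-equation} is itself an instance of \eqref{linear-matrix-equation} with commuting diagonalizable parameters.

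First I would prove (i) $\Leftrightarrow$ (ii) $\Leftrightarrow$ (iii). The map $X\mapsto S^{-1}XS$ is a linear bijection of $\mathcal{M}_n(\C)$ sending the solutions of \eqref{linear-matrix-equation} onto the solutions of \eqref{linear-matrix-redus}, and the latter equation is equivalent, entry by entry, to the scalar equations $\Gamma^S_{rs}y_{rs}=c_r\delta_{rs}$. Hence \eqref{linear-matrix-equation} is consistent iff each of these scalar equations is solvable: for $r\ne s$ it reads $\Gamma^S_{rs}y_{rs}=0$, which is always solvable, while for $r=s$ the equation $\Gamma^S_{rr}y_{rr}=c_r$ is solvable precisely when $\Gamma^S_{rr}\ne 0$ or $c_r=0$. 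This is (i) $\Leftrightarrow$ (ii). For (ii) $\Rightarrow$ (iii) I would use the explicit formula obtained in Lemma \ref{w-X-commute-22}, namely $S^{-1}\widehat{X}S=\text{diag}\big((\Gamma^S_{11})^{\boldsymbol{\dagger}}c_1,\dots,(\Gamma^S_{nn})^{\boldsymbol{\dagger}}c_n\big)$, and verify that this diagonal matrix satisfies \eqref{linear-matrix-redus-components}: the off-diagonal relations hold because the candidate is diagonal, and the $r$-th diagonal relation $\Gamma^S_{rr}(\Gamma^S_{rr})^{\boldsymbol{\dagger}}c_r=c_r$ holds when $\Gamma^S_{rr}\ne 0$ and reduces to $0=c_r$ — which is true by (ii) — when $\Gamma^S_{rr}=0$. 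Finally (iii) $\Rightarrow$ (i) is immediate, since (iii) exhibits a solution.

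It remains to bring in (iv) and (v). Equation \eqref{standard-equation} has the form \eqref{linear-matrix-equation} with a single summand, left parameter $\sum_{j=1}^kA_jB_j$, right parameter $I_n$, and free term $C$; as noted just after \eqref{standard-equation} these matrices are commuting and diagonalizable, so everything above applies verbatim to \eqref{standard-equation}. Since $S^{-1}\big(\sum_{j=1}^kA_jB_j\big)S=\text{diag}\big(\sum_{j=1}^k{\bf a}^j\circ{\bf b}^j\big)=\text{diag}(\Gamma^S_{11},\dots,\Gamma^S_{nn})$, the relevant matrix of \eqref{standard-equation} with respect to $S$ is $\text{vec}(\Gamma^S_{11},\dots,\Gamma^S_{nn})\,{\bf 1}_n^T$, whose diagonal entries are precisely $\Gamma^S_{11},\dots,\Gamma^S_{nn}$; moreover the matrix \eqref{X-hat-99} built for \eqref{standard-equation} is $\big(\sum_{j=1}^kA_jB_j\big)^{\bf D}C=\widehat{X}$. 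Applying the equivalence (i) $\Leftrightarrow$ (ii) to \eqref{standard-equation} therefore yields (iv) $\Leftrightarrow$ (ii), and applying (ii) $\Leftrightarrow$ (iii) to \eqref{standard-equation} yields (iv) $\Leftrightarrow$ (v). Chaining the equivalences closes the argument.

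The only genuinely delicate point is the diagonal check in (ii) $\Rightarrow$ (iii): one must notice that the scalar identity $a\,a^{\boldsymbol{\dagger}}c=c$ invoked there is exactly the content of hypothesis (ii), since it fails when $a=0$ and $c\ne 0$. Everything else — the bijectivity of $X\mapsto S^{-1}XS$, the passage to \eqref{linear-matrix-redus-components}, and the observation that \eqref{standard-equation} falls under the same hypotheses with relevant matrix having the same diagonal — is routine bookkeeping.
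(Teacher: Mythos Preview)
Your proof is correct and follows essentially the same approach as the paper: reduce \eqref{linear-matrix-equation} to the scalar system \eqref{linear-matrix-redus-components} for (i)$\Leftrightarrow$(ii), use the diagonal formula from Lemma~\ref{w-X-commute-22} for (ii)$\Rightarrow$(iii), and exploit that the relevant matrix of \eqref{standard-equation} has the same diagonal $\Gamma^S_{rr}$ to tie in (iv) and (v). The only organisational difference is that the paper gets (iii)$\Leftrightarrow$(v) in one stroke from the commutativity of $\widehat{X}$ with the parameters (Lemma~\ref{w-X-commute-22}), whereas you route both through (ii) by re-applying the framework to \eqref{standard-equation}; the content is the same.
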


\begin{proof}
An elementary analysis of \eqref{linear-matrix-redus-components} proves the equivalence $(i)\Leftrightarrow (ii)$.
The equivalence $(iii)\Leftrightarrow (v)$ is a consequence of Lemma \ref{w-X-commute-22}. The implications $(iii)\Rightarrow (i)$ and $(v)\Rightarrow (iv)$ are trivial.

$(ii)\Rightarrow (iii)$. From \eqref{linear-matrix-redus-components} we observe that $\widehat{Y}=\text{diag}((\Gamma^S_{11})^{\boldsymbol{\dagger}}c_1,\dots, (\Gamma^S_{nn})^{\boldsymbol{\dagger}}c_n)$
is a solution of \eqref{linear-matrix-redus}. We deduce that $\widehat{X}=S\widehat{Y}S^{-1}$ is a solution of \eqref{linear-matrix-equation}.

$(iv)\Rightarrow (ii)$. The relevant matrix for the standard matrix equation \eqref{standard-equation} has the components $\Gamma_{rs}^{\text{st},\,S}=\sum_{j=1}^k a_r^{j}b_r^{j}$. Because \eqref{standard-equation} is consistent we deduce that for all $r\in \{1,\dots, n\}$ we have $\Gamma^{\text{st},\,S}_{rr}\neq 0$ or $c_r=0$. But, $\Gamma^{\text{st},\,S}_{rr}=\Gamma^{S}_{rr}$.
\end{proof}

\begin{rem}
If the assumptions of the above theorem are not met, then the conclusions are not necessarily true. 
Example \ref{exmp-stein-1} shows a consistent equation \eqref{linear-matrix-equation} where $\mathcal{N}$ is  a commuting set  containing an element which is not a diagonalizable matrix. Example \ref{exmp-stein-2} shows a consistent equation where $\mathcal{N}$ contains only diagonalizable matrices but it is not a commuting set. In both cases the matrix $\widehat{X}$ is not a solution of \eqref{linear-matrix-equation}.
\end{rem}

The next results present some properties about the structure of the solutions of the linear matrix equation \eqref{linear-matrix-equation}.

\begin{thm}\label{uniqueness-general-76}
We assume that $\mathcal{N}$ is a commuting set of diagonalizable matrices and \eqref{linear-matrix-equation} is consistent. The following statements are valid.
\begin{enumerate}[(i)]
\item The solutions of \eqref{linear-matrix-equation} form the affine set $\mathcal{X}$ obtained by translating of the subspace  $\mathcal{X}_{h}\subset\mathcal{M}_n(\C)$ with the matrix $\widehat{X}$ (see Lemma \ref{homogeneous-solutions-44}). 
\item $\widehat{X}$ is the unique solution of \eqref{linear-matrix-equation} if and only if all entries of the relevant matrix are non-zero. 
\item If the standard linear matrix equation \eqref{standard-equation} has an infinite number of solutions, then \eqref{linear-matrix-equation} has an infinite number of solutions.
\item If all off-diagonal elements of the relevant matrix are non-zero, then $\mathcal{N}\cup \mathcal{X}$ is a commuting set of diagonalizable matrices and all matrices in $\mathcal{X}$ are solutions of the standard linear matrix equation \eqref{standard-equation}.
\end{enumerate}
\end{thm}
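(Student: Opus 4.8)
The plan is to obtain the four assertions from the reduction to the diagonal equation \eqref{linear-matrix-redus-components}, together with Theorem \ref{case-consistent-54}, Lemma \ref{homogeneous-solutions-44}, and Lemma \ref{w-X-commute-22}. For $(i)$ I would use only linearity: the map $X\mapsto\sum_{j=1}^kA_jXB_j$ is a linear endomorphism of $\mathcal{M}_n(\C)$, so its fibre over $C$ --- nonempty by the consistency hypothesis --- is a translate of its kernel; the kernel is precisely $\mathcal{X}_h$ by Lemma \ref{homogeneous-solutions-44}$(i)$, and $\widehat{X}$ belongs to the fibre by Theorem \ref{case-consistent-54}$(iii)$, whence $\mathcal{X}=\widehat{X}+\mathcal{X}_h$. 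One small point to record: $\mathcal{X}_h$ and $\Gamma^S$ depend only on $A_1,\dots,A_k,B_1,\dots,B_k$ and not on $C$, so invoking Lemma \ref{homogeneous-solutions-44} (stated with $C=O_n$) in the non-homogeneous setting is harmless.

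Assertion $(ii)$ is then immediate from $(i)$ and the dimension count in Lemma \ref{homogeneous-solutions-44}$(i)$: $\widehat{X}$ is the unique solution iff $\mathcal{X}_h=\{O_n\}$, i.e. iff the relevant matrix $\Gamma^S$ has no zero entry. For $(iii)$ I would pass to the reduced form of the standard equation: under $Y=S^{-1}XS$, equation \eqref{standard-equation} becomes $\text{diag}\big(\Gamma^S_{11},\dots,\Gamma^S_{nn}\big)\,Y=\text{diag}(c_1,\dots,c_n)$, equivalently $\Gamma^S_{rr}\,y_{rs}=c_r\delta_{rs}$ for all $r,s$. Hence the homogeneous solution space of \eqref{standard-equation} has dimension $n\cdot\#\{r:\Gamma^S_{rr}=0\}$; if \eqref{standard-equation} has infinitely many solutions this number is positive, so $\Gamma^S$ has at least one (diagonal) zero entry, so $\mathcal{X}_h\neq\{O_n\}$ and \eqref{linear-matrix-equation} has infinitely many solutions by $(i)$.

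For $(iv)$, suppose all off-diagonal entries of $\Gamma^S$ are non-zero. Then \eqref{linear-matrix-redus-components} forces $y_{rs}=0$ for $r\neq s$, i.e. $Y=S^{-1}XS$ is diagonal for every solution $X$ of \eqref{linear-matrix-equation}; thus every matrix of $\mathcal{N}\cup\mathcal{X}$ is diagonalized by $S$, so $\mathcal{N}\cup\mathcal{X}$ is simultaneously diagonalizable and hence a commuting set of diagonalizable matrices. For the last claim, take $X\in\mathcal{X}$ with $Y=S^{-1}XS=\text{diag}(y_{11},\dots,y_{nn})$; since $S^{-1}\big(\sum_{j=1}^kA_jB_j\big)S=\text{diag}\big(\Gamma^S_{11},\dots,\Gamma^S_{nn}\big)$, we get $S^{-1}\big(\sum_{j=1}^kA_jB_j\big)XS=\text{diag}\big(\Gamma^S_{11}y_{11},\dots,\Gamma^S_{nn}y_{nn}\big)=\text{diag}(c_1,\dots,c_n)=S^{-1}CS$ by \eqref{linear-matrix-redus-components}, so $X$ solves \eqref{standard-equation}. (Alternatively one can write $X=\widehat{X}+H$ with $H\in\mathcal{X}_h$ and combine Theorem \ref{case-consistent-54}$(v)$ with Lemma \ref{homogeneous-solutions-44}$(ii)$.)

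I do not expect a genuine obstacle: the proof is essentially an organised reading of already-established facts. The places that call for a little care are the correct form of the standard equation's relevant matrix (its entries $\Gamma^{\text{st},\,S}_{rs}=\Gamma^S_{rr}$ are constant along rows) used in $(iii)$, and the observation --- needed in $(i)$ and $(iv)$ --- that $\mathcal{X}_h$ and $\Gamma^S$ do not depend on $C$, so that the $C=O_n$ hypothesis in Lemma \ref{homogeneous-solutions-44} does not restrict its applicability.
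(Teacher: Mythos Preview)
Your proposal is correct and follows essentially the same approach as the paper. The paper's proof is much terser --- it leaves $(i)$ and $(ii)$ unstated as obvious, proves $(iii)$ by the same observation that $\Gamma^{\text{st},S}_{rs}=\Gamma^S_{rr}$, and for $(iv)$ uses the decomposition $X=\widehat{X}+X_h$ together with Lemma~\ref{homogeneous-solutions-44}$(ii)$ and Lemma~\ref{w-X-commute-22} (the alternative you mention in parentheses) rather than your direct diagonal computation, but the underlying ideas are identical.
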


\begin{proof}
$(iii)$ There is $(r,s)\in \{1,\dots,n\}^2$ such that $\Gamma^{\text{st},S}_{rs}=0$. We have that $\sum_{i=1}^ka^j_rb^j_r=0=\Gamma^S_{rr}$. We apply $(i)$. 

$(iv)$ If $X$ is a solution of \eqref{linear-matrix-equation}, then $X=\widehat{X}+X_h$ where $X_h\in \mathcal{X}_h$. From Lemma \ref{homogeneous-solutions-44} there exists $S\in\mathfrak{S}(\mathcal{N}\cup \mathcal{X}_h)$. Using  Lemma \ref{w-X-commute-22} we obtain that $S\in \mathfrak{S}(\widehat{X})$. Consequently, $S\in \mathfrak{S}(X)$.
\end{proof}

\begin{thm}\label{invertible-general-52}
If $\mathcal{N}$ is a commuting set of diagonalizable matrices, then the following statements hold.
\begin{enumerate}[(i)]
\item If $C\in\mathcal{M}_n^{\text{inv}}(\C)$, then \eqref{linear-matrix-equation} is consistent if and only if $\sum\limits_{j=1}^kA_jB_j\in\mathcal{M}_n^{\text{inv}}(\C)$.
\item If $\sum\limits_{j=1}^kA_jB_j\in\mathcal{M}_n^{\text{inv}}(\C)$, then $\widehat{X}=\big(\sum\limits_{j=1}^kA_jB_j\big)^{-1}C$ is a solution of \eqref{linear-matrix-equation}. In this case, if $X\neq\widehat{X}$ is a solution of \eqref{linear-matrix-equation}, then $\{{A_1}, \dots, {A_k}, X\}$ and $\{{B_1}, \dots,  {B_k}, X\}$ are not commuting sets.
\end{enumerate}
\end{thm}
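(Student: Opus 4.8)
The plan is to push everything through the diagonalized picture \eqref{linear-matrix-redus-components} and invoke Theorems \ref{case-consistent-54} and \ref{uniqueness-general-76}. Fix $S\in\mathfrak{S}(\mathcal{N})$ and recall that $S^{-1}\big(\sum_{j=1}^kA_jB_j\big)S=\text{diag}(\Gamma^S_{11},\dots,\Gamma^S_{nn})$ and $S^{-1}CS=\text{diag}(c_1,\dots,c_n)$; hence $\sum_{j=1}^kA_jB_j$ is invertible if and only if $\Gamma^S_{rr}\neq 0$ for every $r$, and $C$ is invertible if and only if $c_r\neq 0$ for every $r$. For $(i)$: when $C$ is invertible every $c_r\neq 0$, so condition $(ii)$ of Theorem \ref{case-consistent-54} (``$\Gamma^S_{rr}\neq 0$ or $c_r=0$'') reduces exactly to ``$\Gamma^S_{rr}\neq 0$ for all $r$'', i.e.\ to the invertibility of $\sum_{j=1}^kA_jB_j$; this proves $(i)$. (Alternatively one may use the equivalence $(i)\Leftrightarrow(iv)$ of that theorem: $\big(\sum_{j=1}^kA_jB_j\big)X=C$ with $C$ of rank $n$ forces $\sum_{j=1}^kA_jB_j$ to have rank $n$, while conversely $\big(\sum_{j=1}^kA_jB_j\big)^{-1}C$ solves it.)

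For the first assertion of $(ii)$: if $\sum_{j=1}^kA_jB_j$ is invertible, its Drazin inverse coincides with its ordinary inverse, so $\widehat{X}=\big(\sum_{j=1}^kA_jB_j\big)^{-1}C$; moreover all $\Gamma^S_{rr}\neq 0$, so condition $(ii)$ of Theorem \ref{case-consistent-54} holds and its condition $(iii)$ gives that $\widehat{X}$ is a solution of \eqref{linear-matrix-equation}.

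For the second assertion of $(ii)$: let $X\neq\widehat{X}$ be a solution and set $Y=S^{-1}XS$, $X_h=X-\widehat{X}$. By Theorem \ref{uniqueness-general-76}$(i)$ we have $X_h\in\mathcal{X}_h$ with $X_h\neq O_n$; since $S^{-1}\widehat{X}S$ is diagonal (Lemma \ref{w-X-commute-22}) while $S^{-1}X_hS$ vanishes wherever $\Gamma^S_{rs}\neq 0$, there is a pair $(r,s)$ with $y_{rs}\neq 0$ and $\Gamma^S_{rs}=0$; because every $\Gamma^S_{rr}\neq 0$, necessarily $r\neq s$. Suppose, for contradiction, that $\{A_1,\dots,A_k,X\}$ is a commuting set; conjugating by $S$, the matrix $Y$ commutes with each $\text{diag}({\bf a}^j)$, so the $(r,s)$ entry of the commutator yields $y_{rs}(a_s^{j}-a_r^{j})=0$, hence $a_r^{j}=a_s^{j}$ for all $j$ and therefore $\Gamma^S_{rs}=\sum_{j=1}^ka_r^{j}b_s^{j}=\sum_{j=1}^ka_s^{j}b_s^{j}=\Gamma^S_{ss}\neq 0$, a contradiction. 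The same argument applied to the matrices $\text{diag}({\bf b}^j)$ forces $b_r^{j}=b_s^{j}$ for all $j$ and hence $\Gamma^S_{rs}=\Gamma^S_{rr}\neq 0$, again a contradiction, so $\{B_1,\dots,B_k,X\}$ is not a commuting set either.

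The identifications of the diagonalized forms are routine; the one point needing care is extracting from $X\neq\widehat{X}$ a genuinely \emph{off-diagonal} zero of the relevant matrix, which is exactly where the invertibility hypothesis on $\sum_{j=1}^kA_jB_j$ (equivalently $\Gamma^S_{rr}\neq 0$ for all $r$) is used; once that is in hand, the commutator identity $[\,Y,\text{diag}({\bf a}^j)\,]_{rs}=y_{rs}(a_s^{j}-a_r^{j})$ closes the argument.
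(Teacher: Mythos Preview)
Your proof is correct. For $(i)$ and the first assertion of $(ii)$ you argue essentially as the paper does (you appeal to condition $(ii)$ of Theorem~\ref{case-consistent-54}, the paper appeals to the equivalent condition $(iv)$, but this is the same idea).

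For the second assertion of $(ii)$ you take a genuinely different route. The paper's argument is a one-liner at the level of the original equation: if $X$ commutes with every $A_j$, then $\sum_j A_jXB_j = X\sum_j A_jB_j$, so \eqref{linear-matrix-equation} reads $X\big(\sum_j A_jB_j\big)=C$, whence $X=C\big(\sum_j A_jB_j\big)^{-1}=\big(\sum_j A_jB_j\big)^{-1}C=\widehat{X}$ (the last equality because $C$ commutes with $\sum_j A_jB_j$ and hence with its inverse); the $B_j$ case is symmetric. Your argument instead passes to the diagonalized picture, locates an off-diagonal entry $y_{rs}\neq 0$ with $\Gamma^S_{rs}=0$, and uses the commutator identity $[Y,\text{diag}({\bf a}^j)]_{rs}=y_{rs}(a_s^{j}-a_r^{j})$ to force $a_r^{j}=a_s^{j}$ and derive $\Gamma^S_{rs}=\Gamma^S_{ss}\neq 0$. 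This is perfectly valid and makes transparent exactly where the invertibility hypothesis (all $\Gamma^S_{rr}\neq 0$) is consumed; the paper's argument is shorter and avoids diagonalizing altogether for this step, but yours gives more structural information about \emph{which} entry of $Y$ witnesses the failure of commutativity.
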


\begin{proof}
$(i)$ The standard linear matrix equation \eqref{standard-equation} is consistent if and only if  $\sum_{j=1}^kA_jB_j$ is invertible. We use Theorem \ref{case-consistent-54}. 

$(ii)$ The standard linear matrix equation \eqref{standard-equation} has $\widehat{X}:=\big(\sum_{j=1}^kA_jB_j\big)^{-1}C$ as the unique solution. Let $X$ be a solution of \eqref{linear-matrix-equation}. If $\{{A_1}, \dots, {A_k}, X\}$ is a commuting set, then $X=C(\sum_{j=1}^kA_jB_j)^{-1}=\widehat{X}$. 
\end{proof}

\subsection{Linear matrix equations with parameters that form a commuting set of normal matrices}\label{normal-results-22}

In this section we study the linear matrix equation \eqref{linear-matrix-equation} when $\mathcal{N}$ is a commuting set of normal matrices; more precisely, $\forall j,l\in \{1,\dots, k\}$, we have\footnote{If $M\in \mathcal{M}_{m\times n}(\C)$, then $M^{\star}$ is its conjugate transpose.}  $A_jB_l=B_lA_j$, $A_jC=CA_j$, $B_lC=CB_l$, $A_j^{\star}A_j=A_jA_j^{\star}$, $B_l^{\star}B_l=B_lB_l^{\star}$, and $C^{\star}C=CC^{\star}$. 
The set $\mathcal{N}$ is a simultaneously unitarily diagonalizable set (see \cite{horn}, Theorem 2.5.5). 
From Lemma \ref{prop-normal-Moore} we have that $C$ and $\sum_{j=1}^kA_jB_j$ are commuting normal matrices; the parameters of the standard linear matrix equation \eqref{standard-equation} form a commuting set of normal matrices.

All the properties presented in the general case are valid in this case. In what follows, we present some specific properties for the case where $\mathcal{N}$ is a commuting set of normal matrices.
Using the Moore-Penrose inverse (see Appendix \ref{Moore-Penrose-prop}) and Lemma \ref{normal-Drazin-Moore-Penrose}, the matrix defined in \eqref{X-hat-99} can be written
\begin{equation}
\widehat{X}=\Big(\sum_{j=1}^kA_jB_j\Big)^{\bf D}C=\Big(\sum_{j=1}^kA_jB_j\Big)^{\boldsymbol{\dagger}}C.
\end{equation}
$\widehat{X}$ is a normal matrix that commutes with all matrices in $\mathcal{N}$. 

\begin{thm}\label{normal-off-diagonal9}
If $\mathcal{N}$ is a commuting set of normal matrices, then the following statements are equivalent. 
\begin{enumerate}[(i)]
\item All off-diagonal elements of the relevant matrix are non-zero.
\item All solutions of \eqref{linear-matrix-equation} are normal matrices.  
\end{enumerate}
\end{thm}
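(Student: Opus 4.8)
The plan is to fix once and for all a \emph{unitary} diagonalizing matrix $U\in\mathfrak{S}(\mathcal{N})$ — one exists because a commuting set of normal matrices is simultaneously unitarily diagonalizable — and to phrase everything in terms of the relevant matrix $\Gamma:=\Gamma^{U}$. The single fact I will use repeatedly is that conjugation by $U$ preserves normality: for every $Z\in\mathcal{M}_{n}(\C)$ one has $(UZU^{\star})(UZU^{\star})^{\star}=UZZ^{\star}U^{\star}$ and $(UZU^{\star})^{\star}(UZU^{\star})=UZ^{\star}ZU^{\star}$, so $UZU^{\star}$ is normal if and only if $Z$ is; in particular $UDU^{\star}$ is normal for every diagonal $D$. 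I argue under the standing hypothesis that \eqref{linear-matrix-equation} is consistent (this is implicit in $(ii)$; by Theorem~\ref{case-consistent-54} consistency is governed by the \emph{diagonal} entries of $\Gamma$, whereas $(i)$ concerns the off-diagonal ones).

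For the implication $(i)\Rightarrow(ii)$ I would take an arbitrary solution $X$ and pass to $Y=U^{-1}XU$, which by \eqref{linear-matrix-redus-components} satisfies $\Gamma_{rs}y_{rs}=c_{r}\delta_{rs}$. If every off-diagonal entry of $\Gamma$ is non-zero, then for $r\neq s$ the equation $\Gamma_{rs}y_{rs}=0$ forces $y_{rs}=0$, so $Y$ is diagonal and $X=UYU^{\star}$ is normal by the remark above. This is the ``diagonalizable'' conclusion of Theorem~\ref{uniqueness-general-76}$(iv)$ sharpened to ``normal'' precisely because $U$ is unitary.

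For $(ii)\Rightarrow(i)$ I would argue by contraposition: assuming $\Gamma_{rs}=0$ for some pair $r\neq s$, I would exhibit a non-normal solution. Since the equation is consistent, $\widehat{X}$ is a solution (Theorem~\ref{case-consistent-54}); and the computation in the proof of Lemma~\ref{w-X-commute-22} with $S=U$ shows $\widehat{X}=U\widehat{Y}U^{\star}$ with $\widehat{Y}=\text{diag}(d_{1},\dots,d_{n})$ diagonal, $d_{i}=(\Gamma_{ii})^{\boldsymbol{\dagger}}c_{i}$. Let $E_{rs}$ denote the matrix unit whose only non-zero entry is a $1$ in position $(r,s)$. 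Because $\Gamma_{rs}=0$, the matrix $E_{rs}$ belongs to $\{Y=[y_{pq}]\mid y_{pq}=0\ \text{whenever}\ \Gamma_{pq}\neq0\}$, so $UE_{rs}U^{\star}\in\mathcal{X}_{h}$ and, by Theorem~\ref{uniqueness-general-76}$(i)$, $X:=\widehat{X}+UE_{rs}U^{\star}=U(\widehat{Y}+E_{rs})U^{\star}$ is a solution of \eqref{linear-matrix-equation}. It remains to check that $\widehat{Y}+E_{rs}$ is not normal, and a short direct computation gives
\[
(\widehat{Y}+E_{rs})(\widehat{Y}+E_{rs})^{\star}-(\widehat{Y}+E_{rs})^{\star}(\widehat{Y}+E_{rs})=(\overline{d_{s}}-\overline{d_{r}})E_{rs}+(d_{s}-d_{r})E_{sr}+E_{rr}-E_{ss},
\]
which is non-zero because $E_{rr}\neq E_{ss}$ for $r\neq s$ and that summand cannot be cancelled by the two off-diagonal ones. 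Hence $X$ is a non-normal solution and $(ii)$ fails.

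The argument is mostly bookkeeping; the step that genuinely needs care is the last one — recognizing that adjoining a single off-diagonal matrix unit to a diagonal matrix always destroys normality — together with keeping track that every conjugation in sight is by the \emph{unitary} $U$, which is exactly what lets me read ``diagonal after conjugation by $U$'' as ``normal''. I would also remark at the outset that property $(i)$ is independent of the chosen $U$: any other relevant matrix equals $P_{\sigma}^{-1}\Gamma^{U}P_{\sigma}$ for some $\sigma\in\mathcal{S}_{n}$, and a simultaneous permutation of rows and columns carries off-diagonal entries to off-diagonal entries, so ``all off-diagonal entries non-zero'' is well defined.
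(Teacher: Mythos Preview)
Your proof is correct and follows essentially the same approach as the paper: fix a unitary $U\in\mathfrak{S}(\mathcal{N})$, reduce to the componentwise system, and for the contrapositive direction use a matrix unit $E_{rs}$ at a zero off-diagonal entry of $\Gamma$. The only difference is that for $(ii)\Rightarrow(i)$ the paper exhibits $UE_{rs}U^{\star}\in\mathcal{X}_h$ itself as non-normal, whereas you add it to $\widehat{X}$ and check that $\widehat{Y}+E_{rs}$ is non-normal; your version is slightly more careful, since in the inhomogeneous case it is the elements of $\widehat{X}+\mathcal{X}_h$, not of $\mathcal{X}_h$, that must be tested.
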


\begin{proof} Let be\footnote{$\mathcal{U}(n)$ is the set of unitary matrices.} $U\in \mathfrak{S}(\mathcal{N})\cap \mathcal{U}(n)$. 

$(i)\Rightarrow (ii)$. If $X\in \mathcal{X}$, then $X=\widehat{X}+X_h$ with $X_h\in \mathcal{X}_h$. We have $\widehat{X}=U\text{diag}(\widehat{\bf x})U^{\star}$ and, using Lemma \ref{homogeneous-solutions-44}, we can write $X_h=U\text{diag}({\bf x}_h)U^{\star}$.  We deduce that $X$ is a normal matrix.

$(ii)\Rightarrow (i)$. Suppose there exists $(r,s)\in \{1,\dots,n\}^2$, with $r\neq s$, such that $\Gamma^U_{rs}=0$. From Lemma \ref{homogeneous-solutions-44} we have that\footnote{$\{{\bf e}_1,\dots,{\bf e}_n\}$ is the canonical basis in $\C^n$.} $X:=U{\bf e}_r\otimes {\bf e}_s^TU^{\star}$ is an element of $\mathcal{X}_{h}$. Because $XX^{\star}-X^{\star}X=U({\bf e}_r\otimes {\bf e}_s^T-{\bf e}_s\otimes {\bf e}_r^T)U^{\star}\neq O_n$ we deduce that $X$ is not a normal matrix. The obtained contradiction proves the announced result.
\end{proof}

\section{The Sylvester equation and the continuous Lyapunov equation}\label{section-Sylvester}

The Sylvester equation (for square matrices) is a particular case of \eqref{linear-matrix-equation} and it has the form
\begin{equation}\label{Sylvester}
AX+XB=C,
\end{equation}
where $A,B,C\in \mathcal{M}_n(\C)$. 

\begin{thm}\label{Sylvester-thm-88}
If $\{A,B,C\}$ is a commuting set of diagonalizable matrices, then the following statements hold.
\begin{enumerate}[(i)]
\item $\widehat{X}=(A+B)^{\bf D}C$ is a diagonalizable matrix that  commutes with all the matrices in the set $\{A,B,C\}$. If $A,B,C$ are normal matrices, then $\widehat{X}=(A+B)^{\boldsymbol{\dagger}}C$ and this is a normal matrix. 
\item Any of the following conditions is a necessary and sufficient condition for the consistency of \eqref{Sylvester}.
\begin{enumerate}[(a)]
\item $\widehat{X}$ is a solution of the Sylvester equation \eqref{Sylvester}. 
\item The standard linear equation $(A+B)X=C$ is consistent.
\item $\widehat{X}$ is a solution of the standard linear equation $(A+B)X=C$. 
\end{enumerate}
\item If \eqref{Sylvester} is consistent,  $({\bf a}=\emph{vec}(a_1,\dots,a_n), {\bf b}=\emph{vec}(b_1,\dots,b_n))$ is a sequence of induced vectors by $(A,B)$, then 
\begin{enumerate}
\item the dimension of the affine set of solutions of \eqref{Sylvester} is equal to the cardinality of $\{(r,s)\in \{1,\dots,n\}^2\,|\,a_r+b_s=0\}$;
\item $a_r+b_s\neq 0$ when $r\neq s$ is a sufficient condition such that all solutions of \eqref{Sylvester} are diagonalizable matrices.
\end{enumerate}
\item If $A+B$ is invertible, then $\widehat{X}=(A+B)^{-1}C$ is a solution of \eqref{Sylvester}. In addition, if $X\neq \widehat{X}$  is a solution of \eqref{Sylvester}, then $AX\neq XA$ and $BX\neq XB$. 
\item If \eqref{Sylvester} is consistent and $A+B$ is not invertible, then \eqref{Sylvester} has an infinite number of solutions.
\end{enumerate}
\end{thm}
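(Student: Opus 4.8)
The plan is to read the Sylvester equation \eqref{Sylvester} as the instance of \eqref{linear-matrix-equation} with $k=2$, $A_1=A$, $B_1=I_n$, $A_2=I_n$, $B_2=B$, and right-hand side $C$. Under this identification $\mathcal N=\{A,I_n,I_n,B,C\}=\{A,B,C,I_n\}$; since the identity commutes with every matrix and is diagonalized by every invertible matrix, $\mathcal N$ is a commuting set of diagonalizable matrices exactly when $\{A,B,C\}$ is, and $\mathfrak S(\mathcal N)=\mathfrak S(\{A,B,C\})$. Moreover $\sum_{j=1}^2A_jB_j=A+B$, so the matrix \eqref{X-hat-99} becomes $\widehat X=(A+B)^{\bf D}C$, the standard equation \eqref{standard-equation} becomes $(A+B)X=C$, and for $S\in\mathfrak S(\mathcal N)$ with ${\bf a}=\Delta_A(S)$, ${\bf b}=\Delta_B(S)$ we get ${\bf a}^1={\bf a}$, ${\bf b}^1={\bf 1}_n$, ${\bf a}^2={\bf 1}_n$, ${\bf b}^2={\bf b}$, hence by \eqref{Gamma-def-11} the relevant matrix has entries $\Gamma^S_{rs}=a_r+b_s$. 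With this dictionary fixed, each assertion becomes a transcription of a result from Section~\ref{main-section}.

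Specifically, part~(i) is Lemma~\ref{w-X-commute-22} applied to $\mathcal N$, together with the identity $\widehat X=(A+B)^{\bf D}C=(A+B)^{\boldsymbol\dagger}C$ and the normality of $\widehat X$ recorded at the start of Subsection~\ref{normal-results-22} for the case of normal parameters. Part~(ii) is the list of equivalences in Theorem~\ref{case-consistent-54}: its statements (iii), (iv), (v) are precisely (a), (b), (c) here, because $\sum A_jB_j=A+B$. Part~(iv) is Theorem~\ref{invertible-general-52}(ii); for its second half one only has to observe that $\{A_1,A_2,X\}=\{A,I_n,X\}$ is a commuting set iff $AX=XA$, and $\{B_1,B_2,X\}=\{I_n,B,X\}$ is a commuting set iff $BX=XB$. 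Part~(iii)(b) is Theorem~\ref{uniqueness-general-76}(iv): the off-diagonal entries of $\Gamma^S$ are exactly the numbers $a_r+b_s$ with $r\neq s$, so their non-vanishing forces $\mathcal N\cup\mathcal X$ to be a commuting set of diagonalizable matrices, whence every solution is diagonalizable.

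The one point that needs a genuine argument is part~(iii)(a), because the induced vectors $({\bf a},{\bf b})$ are assumed induced only by $(A,B)$, not by $(A,B,C)$. By Theorem~\ref{uniqueness-general-76}(i) and Lemma~\ref{homogeneous-solutions-44}(i), $\dim\mathcal X=\dim\mathcal X_h$ equals the number of zero entries of $\Gamma^S$ for any $S\in\mathfrak S(\mathcal N)$, i.e.\ the number of pairs $(r,s)$ with $a_r+b_s=0$ where $a_r,b_s$ are the components of $\Delta_A(S),\Delta_B(S)$. Since $\mathfrak S(\mathcal N)\subset\mathfrak S(\{A,B\})$, this is the count for one particular sequence of induced vectors by $(A,B)$; to get it for an arbitrary such sequence, I would use that any two sequences of induced vectors by $(A,B)$ differ by a common permutation $\sigma\in\mathcal S_n$ (Theorem~\ref{permutation-vectors-221}), which replaces $[a_r+b_s]$ by $P_\sigma^{-1}[a_r+b_s]P_\sigma$ and therefore leaves the number of zero entries unchanged. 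This is the only real obstacle; everything else is mechanical. Finally, for part~(v): if \eqref{Sylvester} is consistent then by Theorem~\ref{case-consistent-54} the standard equation $(A+B)X=C$ is consistent, and if $A+B$ is not invertible it has nontrivial kernel, hence infinitely many solutions, so Theorem~\ref{uniqueness-general-76}(iii) yields infinitely many solutions of \eqref{Sylvester}. (Alternatively, $A+B$ singular means some eigenvalue $a_r+b_r=\Gamma^S_{rr}$ vanishes, so $\Gamma^S$ has a zero entry and $\dim\mathcal X_h\ge 1$.)
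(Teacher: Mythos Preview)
Your proof is correct and follows essentially the same route as the paper: the same identification $A_1=A$, $B_1=I_n$, $A_2=I_n$, $B_2=B$, and then reduction of each item to the corresponding result in Section~\ref{main-section}. The only cosmetic difference is in (iii)(a): the paper, instead of arguing that the zero-count of $[a_r+b_s]$ is permutation-invariant, uses Theorem~\ref{permutation-vectors-221} to find $\sigma$ with $({\bf a},{\bf b})=(\widetilde{\bf a}_\sigma,\widetilde{\bf b}_\sigma)$ for some $S\in\mathfrak S(\{A,B,C\})$ and then observes (via Theorem~\ref{description-s(M)-321}, or simply Lemma~\ref{U-P-234}(i)) that $V=SP_\sigma\in\mathfrak S(\{A,B,C\})$ already realizes $\Gamma^V_{rs}=a_r+b_s$---the same idea from the other side. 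Your treatment of (v) is in fact more explicit than the paper's terse reference to Theorem~\ref{invertible-general-52}.
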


\begin{proof}
For $(i)$ we use Lemma \ref{w-X-commute-22} and the observation from Section \ref{normal-results-22}. The affirmations from $(ii)$ are consequences of Theorem \ref{case-consistent-54}. 

$(iii)$ Let be $S\in \mathfrak{S}(\{A,B,C\})$. $(\widetilde{\bf a}=\Delta_A(S),\widetilde{\bf b}=\Delta_B(S))$ is a sequence of induced vectors by $(A,B)$. From Theorem \ref{permutation-vectors-221} we obtain the existence of $\sigma\in \mathcal{S}_n$ such that ${\bf a}=\widetilde{\bf a}_{\sigma}$ and ${\bf b}=\widetilde{\bf b}_{\sigma}$. From Theorem \ref{description-s(M)-321} we have that $V=SP_{\sigma}\in \mathfrak{S}(\{A,B,C\})$. The equality $\Gamma^V_{rs}=a_r+b_s$ is obtained from \eqref{Gamma-def-11}. We apply Theorem \ref{uniqueness-general-76} and Lemma \ref{homogeneous-solutions-44}. 

To prove $(iv)$ and $(v)$ we use Theorem \ref{invertible-general-52}.
\end{proof}

\begin{rem}
The sequence of induced vectors $({\bf a}, {\bf b})$ can be obtained if we know a matrix $S\in \mathfrak{S}(\{A,B,C\})$ or it can be obtained from the eigenvalues of $A$ and $B$ by applying the algorithm presented in Section \ref{Gamma-without-S}. 
\end{rem}\medskip

A particular case of Sylvester equation is the continuous Lyapunov equation
\begin{equation}\label{continuous-Lyapunov}
A^*X+XA=C,
\end{equation}
with $C$ a Hermitian matrix ($C=C^{\star}$).
The matrices $A$ and $A^{\star}$ commute if and only if $A$ and $A^{\star}$ are normal matrices. $C$ is a normal matrix because it is Hermitian matrix. 

\begin{thm}\label{continuous-Lyapunov-thm-67}
Let be $A$ a normal
 matrix having the eigenvalues $a_1$, ..., $a_n$ and $C$ a Hermitian matrix such that $AC=CA$. The following statements hold.
\begin{enumerate}[(i)]
\item $\widehat{X}=(A+A^{\star})^{\bf D}C=(A+A^{\star})^{\boldsymbol{\boldsymbol{\dagger}}}C$ is a Hermitian matrix that commutes with all the matrices in the set $\{A, A^{\star}, C\}$.
\item Any of the following conditions is a necessary and sufficient condition for the consistency of the continuous Lyapunov equation \eqref{continuous-Lyapunov}.
\begin{enumerate}[(a)]
\item $\widehat{X}$ is a solution of \eqref{continuous-Lyapunov}. 
\item The standard linear equation $(A+A^{\star})X=C$ is consistent.
\item $\widehat{X}$ is a solution of the standard linear equation $(A+A^{\star})X=C$. 
\end{enumerate}
\item If \eqref{continuous-Lyapunov} is consistent, then the dimension of the affine set of the solutions is equal to the cardinality of $\{(r,s)\in \{1,\dots, n\}^2\,|\,\overline{a}_r+a_s= 0\}$. 
\item If \eqref{continuous-Lyapunov} is consistent, then the following statements are equivalent.
\begin{enumerate}[(a)]
\item All solutions of \eqref{continuous-Lyapunov} are normal matrices.
\item If $r,s\in \{1,\dots,n\}$ and $r\neq s$, then $\overline{a}_r+a_s\neq 0$.
\end{enumerate}
\item If $A+A^{\star}$ is invertible, then $\widehat{X}=(A+A^{\star})^{-1}C$ is a solution of \eqref{continuous-Lyapunov}. In addition, if $X\neq \widehat{X}$  is a solution of \eqref{continuous-Lyapunov}, then $AX\neq XA$, $A^{\star}X\neq XA^{\star}$. 

\item If \eqref{continuous-Lyapunov} is consistent and $A+A^{\star}$ is not invertible, then \eqref{continuous-Lyapunov} has an infinite number of solutions.
\end{enumerate}
\end{thm}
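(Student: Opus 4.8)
The plan is to obtain Theorem~\ref{continuous-Lyapunov-thm-67} as a specialization of the Sylvester results of Section~\ref{section-Sylvester} (Theorem~\ref{Sylvester-thm-88}, and through it Theorems~\ref{case-consistent-54}, \ref{uniqueness-general-76}, \ref{invertible-general-52}) together with the normal-matrix result Theorem~\ref{normal-off-diagonal9}, by reading \eqref{continuous-Lyapunov} as the Sylvester equation \eqref{Sylvester} with $A$ replaced by $A^{\star}$ and $B$ replaced by $A$. The first step is to verify that the hypotheses transfer. Since $A$ is normal, $A$ and $A^{\star}$ are normal and $AA^{\star}=A^{\star}A$; since $C$ is Hermitian it is normal; and conjugate-transposing $AC=CA$ gives $CA^{\star}=A^{\star}C$. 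Hence $\mathcal{N}=\{A^{\star},A,C\}$ is a commuting set of normal matrices, so every statement of Section~\ref{main-section} and Section~\ref{normal-results-22} applies, with $\sum_{j}A_jB_j=A^{\star}+A=A+A^{\star}$ playing the role of $A+B$.

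Parts $(ii)$, $(v)$ and $(vi)$ are then immediate transcriptions of Theorem~\ref{Sylvester-thm-88}$(ii)$, $(iv)$, $(v)$ with $A+A^{\star}$ in place of $A+B$; in particular the non-commutation conclusion in $(v)$ is exactly the statement $A^{\star}X\neq XA^{\star}$, $AX\neq XA$. For $(i)$, Theorem~\ref{Sylvester-thm-88}$(i)$ (equivalently Lemma~\ref{w-X-commute-22} with the observation of Section~\ref{normal-results-22}) already gives that $\widehat{X}=(A+A^{\star})^{\bf D}C=(A+A^{\star})^{\boldsymbol{\dagger}}C$ is a normal matrix commuting with $A^{\star}$, $A$, $C$. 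The one genuinely new point is that $\widehat{X}$ is Hermitian: $A+A^{\star}$ is Hermitian, so its Moore-Penrose inverse $(A+A^{\star})^{\boldsymbol{\dagger}}$ is Hermitian (Appendix~\ref{Moore-Penrose-prop}); it commutes with $C$ because $C$ commutes with $A+A^{\star}$; and the product of two commuting Hermitian matrices is Hermitian.

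For $(iii)$ and $(iv)$ the relevant matrix has to be computed explicitly. Choose a unitary $U$ with $U^{\star}AU=\text{diag}(a_1,\dots,a_n)$; then $U^{\star}A^{\star}U=\text{diag}(\overline{a}_1,\dots,\overline{a}_n)$, so $\Delta_{A^{\star}}(U)=\text{vec}(\overline{a}_1,\dots,\overline{a}_n)$ and $\Delta_{A}(U)=\text{vec}(a_1,\dots,a_n)$, and \eqref{Gamma-def-11} gives the relevant matrix with respect to $U$ as $\Gamma^{U}_{rs}=\overline{a}_r+a_s$. Part $(iii)$ now follows from Theorem~\ref{uniqueness-general-76}$(i)$ and Lemma~\ref{homogeneous-solutions-44}, since the dimension of the affine set of solutions equals the number of zero entries of $\Gamma^{U}$, i.e.\ the cardinality of $\{(r,s)\,|\,\overline{a}_r+a_s=0\}$. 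Part $(iv)$ is Theorem~\ref{normal-off-diagonal9}: all solutions of \eqref{continuous-Lyapunov} are normal if and only if every off-diagonal entry $\Gamma^{U}_{rs}$ ($r\neq s$) is non-zero, i.e.\ $\overline{a}_r+a_s\neq 0$ for $r\neq s$.

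The main obstacle is the Hermitian assertion in $(i)$, which needs the facts that the Moore-Penrose inverse of a Hermitian matrix is Hermitian and that commuting Hermitian matrices have a Hermitian product; everything else is routine once $\{A^{\star},A,C\}$ has been recognized as a commuting set of normal matrices and the conjugates in the induced vectors $(\text{vec}(\overline{a}_1,\dots,\overline{a}_n),\text{vec}(a_1,\dots,a_n))$ for the pair $(A^{\star},A)$ have been placed correctly.
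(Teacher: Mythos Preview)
Your proposal is correct and follows essentially the same route as the paper: verify that $\{A,A^{\star},C\}$ is a commuting set of normal matrices, then read off $(ii)$, $(v)$, $(vi)$ from Theorem~\ref{Sylvester-thm-88}, obtain $(iv)$ from Theorem~\ref{normal-off-diagonal9}, and get $(iii)$ by identifying the induced vectors $(\overline{\mathbf a},\mathbf a)$ so that $\Gamma_{rs}=\overline a_r+a_s$. The only cosmetic difference is in $(i)$: the paper shows $\widehat X^{\star}=\widehat X$ by the one-line chain $((A+A^{\star})^{\boldsymbol\dagger}C)^{\star}=C(A+A^{\star})^{\boldsymbol\dagger}=(A+A^{\star})^{\boldsymbol\dagger}C$, which is exactly your ``commuting Hermitian matrices'' argument unpacked; one small tightening---when you pick the unitary $U$, take it in $\mathfrak S(\{A^{\star},A,C\})\cap\mathcal U(n)$ (which exists since the family is commuting normal) so that $\Gamma^{U}$ is literally the relevant matrix used in Lemma~\ref{homogeneous-solutions-44} and Theorem~\ref{normal-off-diagonal9}.
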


\begin{proof} 
Because $A^{\star}C=A^{\star}C^{\star}=(CA)^{\star}=(AC)^{\star}=C^{\star}A^{\star}=CA^{\star}$ we obtain that $\{A,A^{\star},C\}$ is a commuting set of normal matrices.

$(i)$ We prove that $\widehat{X}$ is a Hermitian matrix.
Using the properties of the Moore-Penrose inverse (Appendix \ref{Moore-Penrose-prop}) we have
\begin{align*}
\widehat{X}^{\star} & =((A+A^{\star})^{\boldsymbol{\dagger}}C)^{\star}=C^{\star}((A+A^{\star})^{\boldsymbol{\dagger}})^{\star}=C(A+A^{\star})^{\boldsymbol{\dagger}}=(A+A^{\star})^{\boldsymbol{\dagger}}C=\widehat{X}.
\end{align*}
$(iii)$ If ${\bf a}=\text{vec}(a_1,\dots,a_n)$, then $\overline{\bf a}=\text{vec}(\overline{a_1},\dots,\overline{a_n})$ and $({\bf a},\overline{\bf a})$ is a sequence of induced vectors by $(A,A^{\star})$.

For the other announced results, we apply Theorem \ref{Sylvester-thm-88} and Theorem \ref{normal-off-diagonal9}.
\end{proof}

\begin{rem}
Example \ref{continuous-Lyapinov-not-normal-23} presents a homogeneous continuous Lyapunov equation with a parameter $A$, that is not a normal matrix, for which the result given in Theorem \ref{continuous-Lyapunov}-$(iii)$ is not true. 
\end{rem}

\section{The Stein equation and the discrete Lyapunov equation}\label{section-Stein}

Another important particular case of \eqref{linear-matrix-equation} is the Stein equation:
\begin{equation}\label{Stein}
AXB-X=C,
\end{equation}
with $A,B,C\in \mathcal{M}_n(\mathbb{C})$.
Similar arguments to those in the proof of Theorem \ref{Sylvester-thm-88} lead us to the following results.
\begin{thm}\label{Stein-thm-878}
If $\{A,B,C\}$ is a commuting set of diagonalizable matrices, then the following statements hold.
\begin{enumerate}[(i)]
\item $\widehat{X}=(AB-I_n)^{\bf D}C$ is a diagonalizable matrix that  commutes with all the matrices in the set $\{A,B,C\}$. If $A,B,C$ are normal matrices, then $\widehat{X}=(AB-I_n)^{\boldsymbol{\dagger}}C$ and this is a normal matrix.
\item Any of the following conditions is a necessary and sufficient condition for the consistency of the Stein equation \eqref{Stein}.
\begin{enumerate}[(a)]
\item $\widehat{X}$ is a solution of the Stein equation \eqref{Stein}. 
\item The standard linear equation $(AB-I_n)X=C$ is consistent.
\item $\widehat{X}$ is a solution of the standard linear equation $(AB-I_n)X=C$. 
\end{enumerate}
\item If \eqref{Stein} is consistent and $({\bf a}=\emph{vec}(a_1,\dots,a_n), {\bf b}=\emph{vec}(b_1,\dots,b_n))$ is a sequence of induced vectors by $(A,B)$, then 
\begin{enumerate}
\item the dimension of the affine set of solutions of \eqref{Stein} is the cardinality of $\{(r,s)\in \{1,\dots,n\}^2\,|\,a_rb_s=1\}$;
\item $a_rb_s\neq 1$ when $r\neq s$ is a sufficient condition such that all solutions of \eqref{Stein} are diagonalizable matrices.
\end{enumerate}
\item If $AB-I_n$ is invertible, then $\widehat{X}=(AB-I_n)^{-1}C$. In addition, if $X\neq \widehat{X}$  is a solution of \eqref{Stein}, then $AX\neq XA$ and $BX\neq XB$. 
\item If \eqref{Stein} is consistent and $AB-I_n$ is not invertible, then \eqref{Stein} has an infinite number of solutions.
\end{enumerate}
\end{thm}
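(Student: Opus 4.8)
The plan is to derive Theorem \ref{Stein-thm-878} as a direct specialization of the general results in Section \ref{main-section}, exactly as was done for the Sylvester equation in Theorem \ref{Sylvester-thm-88}. The Stein equation \eqref{Stein} has the form \eqref{linear-matrix-equation} with $k=2$, $A_1=A$, $B_1=B$, $A_2=-I_n$, $B_2=I_n$ (or equivalently $A_2=I_n$, $B_2=-I_n$), and $C$ unchanged. First I would check that $\mathcal{N}=\{A,B,-I_n,I_n,C\}=\{A,B,C\}\cup\{\pm I_n\}$ is a commuting set of diagonalizable matrices whenever $\{A,B,C\}$ is: the identity commutes with everything and is diagonal, so $\mathfrak{S}(\mathcal{N})=\mathfrak{S}(\{A,B,C\})$ is nonempty. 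With this identification, $\sum_{j=1}^k A_jB_j = AB - I_n$, so the matrix $\widehat{X}$ of \eqref{X-hat-99} becomes exactly $(AB-I_n)^{\bf D}C$, and the attached standard equation \eqref{standard-equation} becomes $(AB-I_n)X=C$.

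With the dictionary in place, part $(i)$ follows from Lemma \ref{w-X-commute-22} together with the observation in Section \ref{normal-results-22} that in the normal case $\widehat{X}=(\sum_j A_jB_j)^{\bf D}C=(\sum_j A_jB_j)^{\boldsymbol{\dagger}}C$; one just notes that if $A,B,C$ are normal then so are $\pm I_n$ and the whole set $\mathcal{N}$. Part $(ii)$ is Theorem \ref{case-consistent-54} transcribed through the dictionary: conditions $(a)$, $(b)$, $(c)$ are precisely items $(iii)$, $(iv)$, $(v)$ of that theorem. Part $(iv)$ and part $(v)$ come from Theorem \ref{invertible-general-52} applied with $\sum_j A_jB_j = AB-I_n$, noting that the sets $\{A_1,\dots,A_k,X\}=\{A,-I_n,X\}$ and $\{A,X\}$ commute together, and likewise for $B$, so the non-commutativity conclusion reduces to $AX\neq XA$ and $BX\neq XB$.

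The one place that needs genuine care is part $(iii)$, the dimension count and the diagonalizability criterion, because it requires identifying the relevant matrix. Here I would mimic the proof of Theorem \ref{Sylvester-thm-88}$(iii)$: pick $S\in\mathfrak{S}(\{A,B,C\})$, let $(\widetilde{\bf a}=\Delta_A(S),\widetilde{\bf b}=\Delta_B(S))$ be the corresponding induced vectors, use Theorem \ref{permutation-vectors-221} to find $\sigma\in\mathcal{S}_n$ with ${\bf a}=\widetilde{\bf a}_\sigma$, ${\bf b}=\widetilde{\bf b}_\sigma$, and then by Theorem \ref{description-s(M)-321} the matrix $V=SP_\sigma$ also lies in $\mathfrak{S}(\{A,B,C\})=\mathfrak{S}(\mathcal{N})$. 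The key computation is that from \eqref{Gamma-def-11}, with the four-term expansion $A_1=A,B_1=B,A_2=-I_n,B_2=I_n$, the relevant matrix entry is $\Gamma^V_{rs}=a_rb_s + (-1)(1) = a_rb_s-1$. Therefore $\Gamma^V_{rs}=0 \iff a_rb_s=1$, and Theorem \ref{uniqueness-general-76}$(i)$ (with Lemma \ref{homogeneous-solutions-44}) gives that the dimension of the affine solution set equals the number of zero entries of $\Gamma^V$, i.e. the cardinality of $\{(r,s)\,|\,a_rb_s=1\}$; similarly Theorem \ref{uniqueness-general-76}$(iv)$ gives the diagonalizability statement once all off-diagonal entries $a_rb_s-1$ are nonzero, i.e. $a_rb_s\neq 1$ for $r\neq s$.

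I expect the main (and only real) obstacle to be bookkeeping: making sure the sign convention in the pair $(A_2,B_2)$ is consistent throughout so that $\sum_j A_jB_j$, the standard equation, the matrix $\widehat{X}$, and the relevant-matrix entry $a_rb_s-1$ all come out with the right signs, and verifying that adjoining $\pm I_n$ to $\mathcal{N}$ genuinely does not change $\mathfrak{S}(\mathcal{N})$ or disturb the normality hypothesis. No step requires a new idea beyond what is already used for Sylvester; the proof is essentially a translation, and I would write it in the same compressed style, citing Theorems \ref{case-consistent-54}, \ref{uniqueness-general-76}, \ref{invertible-general-52}, Lemmas \ref{homogeneous-solutions-44}, \ref{w-X-commute-22}, and Theorems \ref{permutation-vectors-221}, \ref{description-s(M)-321} at the appropriate points.
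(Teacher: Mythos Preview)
Your proposal is correct and follows exactly the route the paper takes: the paper's proof consists of the single sentence ``Similar arguments to those in the proof of Theorem \ref{Sylvester-thm-88} lead us to the following results,'' and your detailed translation via $A_1=A$, $B_1=B$, $A_2=-I_n$, $B_2=I_n$ (giving $\sum_j A_jB_j=AB-I_n$ and $\Gamma^V_{rs}=a_rb_s-1$) is precisely what that sentence is pointing to.
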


\begin{rem}
The sequence of induced vectors $({\bf a}, {\bf b})$ can be obtained applying the algorithm presented in Section \ref{Gamma-without-S}. 
\end{rem}\medskip

A special case of Stein equation is the discrete Lyapunov equation:  
\begin{equation}\label{discrete-Lyapunov}
A^*XA-X=C
\end{equation}
with $C$ a Hermitian matrix ($C=C^{\star}$).
The matrices $A$ and $A^{\star}$ commute if and only if $A$ and $A^{\star}$ are normal matrices. $C$ is a normal matrix because it is Hermitian matrix. 

\begin{thm}
Let be $A$ a normal
 matrix having the eigenvalues $a_1$, ..., $a_n$ and $C$ a Hermitian matrix such that $AC=CA$. The following statements hold.
\begin{enumerate}[(i)]
\item $\widehat{X}=(AA^{\star}-I_n)^{\boldsymbol{\boldsymbol{\dagger}}}C$ is a Hermitian matrix that commutes with all the matrices in $\{A, A^{\star}, C\}$.
\item Any of the following conditions is a necessary and sufficient condition for the consistency of the discrete Lyapunov equation \eqref{discrete-Lyapunov}.
\begin{enumerate}[(a)]
\item $\widehat{X}$ is a solution of \eqref{discrete-Lyapunov}. 
\item The standard linear equation $(AA^{\star}-I_n)X=C$ is consistent.
\item $\widehat{X}$ is a solution of the standard linear equation $(AA^{\star}-I_n)X=C$. 
\end{enumerate}
\item If \eqref{discrete-Lyapunov} is consistent, then the dimension of the affine set of the solutions is equal to the cardinality of $\{(r,s)\in \{1,\dots, n\}^2\,|\,\overline{a}_r a_s= 1\}$. 
\item If \eqref{discrete-Lyapunov} is consistent, then the following statements are equivalent.
\begin{enumerate}[(a)]
\item All solutions of \eqref{discrete-Lyapunov} are normal matrices.
\item If $r,s\in \{1,\dots,n\}$ and $r\neq s$, then $\overline{a}_ra_s\neq 1$.
\end{enumerate}
\item If $AA^{\star}-I_n$ is invertible, then $\widehat{X}=(AA^{\star}-I_n)^{-1}C$. In addition, if $X\neq \widehat{X}$  is a solution of \eqref{continuous-Lyapunov}, then $AX\neq XA$, $A^{\star}X\neq XA^{\star}$. 
\end{enumerate} 
\end{thm}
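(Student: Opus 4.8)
The plan is to derive every item from the Stein machinery of Theorem~\ref{Stein-thm-878} and Theorem~\ref{normal-off-diagonal9}, specialized to the case $B=A^{\star}$, exactly as Theorem~\ref{continuous-Lyapunov-thm-67} was derived from Theorem~\ref{Sylvester-thm-88}. First I would record that $\{A,A^{\star},C\}$ is a commuting set of normal matrices: $A$ is normal, so $AA^{\star}=A^{\star}A$; $C$ is Hermitian, hence normal; and conjugate-transposing $AC=CA$ and using $C=C^{\star}$ gives $A^{\star}C=CA^{\star}$. Thus \eqref{discrete-Lyapunov} is a Stein equation $A^{\star}XA-X=C$ whose parameter set $\mathcal{N}=\{A^{\star},-I_n,A,I_n,C\}$ is a commuting set of normal matrices, and in the notation of Section~\ref{section-Stein} the matrix $AB-I_n$ becomes $AA^{\star}-I_n$, which is moreover Hermitian and commutes with $C$ (since $AA^{\star}$ does).

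For $(i)$ I would invoke Lemma~\ref{w-X-commute-22} together with the observation of Section~\ref{normal-results-22}, which already give that $\widehat X=(AA^{\star}-I_n)^{\bf D}C=(AA^{\star}-I_n)^{\boldsymbol{\dagger}}C$ is a normal matrix commuting with every element of $\mathcal{N}$, in particular with $A$, $A^{\star}$ and $C$. That $\widehat X$ is Hermitian is then checked verbatim as in the proof of Theorem~\ref{continuous-Lyapunov-thm-67}$(i)$, using that the Moore--Penrose inverse of the Hermitian matrix $AA^{\star}-I_n$ is Hermitian and commutes with $C$, so that $\widehat X^{\star}=C^{\star}\big((AA^{\star}-I_n)^{\boldsymbol{\dagger}}\big)^{\star}=C(AA^{\star}-I_n)^{\boldsymbol{\dagger}}=(AA^{\star}-I_n)^{\boldsymbol{\dagger}}C=\widehat X$. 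The three conditions in $(ii)$ are then a direct transcription of Theorem~\ref{Stein-thm-878}$(ii)$.

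For $(iii)$, $(iv)$ and $(v)$ I would fix $U\in\mathfrak{S}(\{A,A^{\star},C\})\cap\mathcal{U}(n)$ and set $\Delta_A(U)={\bf a}=\text{vec}(a_1,\dots,a_n)$; then $\Delta_{A^{\star}}(U)=\overline{\bf a}$, so $({\bf a},\overline{\bf a})$ is a sequence of induced vectors by $(A,A^{\star})$ and, by \eqref{Gamma-def-11} applied to \eqref{discrete-Lyapunov} in the form $A^{\star}XA-I_nXI_n=C$, the relevant matrix with respect to $U$ has entries $\Gamma^U_{rs}=\overline{a_r}a_s-1$. Then $(iii)$ is Theorem~\ref{Stein-thm-878}$(iii)$: when \eqref{discrete-Lyapunov} is consistent, the dimension of its affine solution set equals the number of vanishing entries of $\Gamma^U$, i.e. the cardinality of $\{(r,s)\in\{1,\dots,n\}^2\,|\,\overline{a_r}a_s=1\}$; the equivalence in $(iv)$ is Theorem~\ref{normal-off-diagonal9}, since ``all off-diagonal entries of $\Gamma^U$ are non-zero'' says precisely ``$\overline{a_r}a_s\neq1$ whenever $r\neq s$''; and $(v)$ is Theorem~\ref{Stein-thm-878}$(iv)$ (equivalently Theorem~\ref{invertible-general-52}$(ii)$) with $B=A^{\star}$, giving $\widehat X=(AA^{\star}-I_n)^{-1}C$ and showing that any solution other than $\widehat X$ commutes with neither $A$ nor $A^{\star}$.

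Since the whole argument is a specialization of the general theory, I do not expect a serious obstacle. The two points that need a little care are the complex-conjugation bookkeeping --- making sure $A^{\star}$ contributes the eigenvalues $\overline{a_r}$, so that the relevant matrix is $\overline{a_r}a_s-1$ and not $a_r\overline{a_s}-1$ (the two have the same zero set, but this should be noted) --- and the verification that $AA^{\star}-I_n$ is Hermitian and commutes with $C$, which is exactly what forces $\widehat X$ to be Hermitian.
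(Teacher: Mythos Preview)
Your proposal is correct and follows essentially the same approach as the paper: establish that $\{A,A^{\star},C\}$ is a commuting set of normal matrices, verify the Hermitian identity $\widehat X^{\star}=\widehat X$ via the Moore--Penrose properties, and then read off all remaining items from Theorem~\ref{Stein-thm-878} and Theorem~\ref{normal-off-diagonal9}. You supply more detail than the paper (the explicit computation of $\Gamma^U_{rs}=\overline{a_r}a_s-1$ and the conjugation bookkeeping), but the strategy is identical.
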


\begin{proof}  
Analogous to the proof from Theorem \ref{continuous-Lyapunov-thm-67} we obtain that $\{A,A^{\star},C\}$ is a commuting set of normal matrices.
Using the properties of the Moore-Penrose inverse (see Appendix \ref{Moore-Penrose-prop}) we have
\begin{align*}
\widehat{X}^{\star} & =((AA^{\star}-I_n)^{\boldsymbol{\dagger}}C)^{\star}=C^{\star}((AA^{\star}-I_n)^{\boldsymbol{\dagger}})^{\star}=C(AA^{\star}-I_n)^{\boldsymbol{\dagger}} \\
& =(AA^{\star}-I_n)^{\boldsymbol{\dagger}}C=\widehat{X}.
\end{align*}
\
For the other announced results, we apply Theorem \ref{Stein-thm-878} and Theorem \ref{normal-off-diagonal9}.
\end{proof}

\begin{rem}
Example \ref{discrete-Lyapunov-not-normal-45} presents a homogeneous discrete Lyapunov equation with a parameter $A$, that is not a normal matrix, for which the result given in Theorem \ref{discrete-Lyapunov}-$(iii)$ is not true. 
\end{rem}

\section{The diagonalizing matrices for a commuting set of diagonalizable matrices}\label{diagonalizing-matrices-654}

First, we present a study of the matrices that commute with a  diagonalizable matrix.
If $M\in \mathcal{M}_n(\C)$, then the set $\mathfrak{C}(M)$ of the matrices that commute with $M$ are the solutions of the homogeneous linear matrix equation
\begin{equation}\label{commuting-set-A-22}
MX-XM=O_n.
\end{equation}
This equation is a particular case of the Sylvester equation \eqref{Sylvester}. 

We assume that $M$ is a diagonalizable matrix and we note by $\lambda_1, \dots, \lambda_d$ the distinct eigenvalues of $M$ and $k_1, \dots, k_d$ their algebraic multiplicity.
From Lemma \ref{U-P-234} we have that ${\bf m}=\text{vec}(\lambda_1 {\bf 1}_{k_1}, \dots, \lambda_d {\bf 1}_{k_d})\in \C^n$ is a star vector induced by $M$.
We choose $S\in \mathfrak{S}(M)$ such that $\Delta_M(S)={\bf m}$.
The relevant matrix with respect to $S$ is:\footnote{${\bf 1}_{p\times q}\in \mathcal{M}_{p\times q}(\C)$ is the all-ones matrix.} 
$$\Gamma^S=\begin{pmatrix}
O_{k_1} & (\lambda_1-\lambda_2){\bf 1}_{k_1\times k_2} & \dots & (\lambda_1-\lambda_d){\bf 1}_{k_1\times k_d}\\
(\lambda_2-\lambda_1){\bf 1}_{k_2\times k_1} & O_{k_2} & \dots & (\lambda_2-\lambda_d){\bf 1}_{k_2\times k_d}\\
\vdots & \vdots & \ddots & \vdots \\
(\lambda_d-\lambda_1){\bf 1}_{k_d\times k_1} &  (\lambda_d-\lambda_2){\bf 1}_{k_d\times k_2} & \dots & O_{k_d} 
\end{pmatrix}.$$

From Lemma \ref{homogeneous-solutions-44} we obtain the following result.

\begin{thm}\label{commutator-A-22}
If the above hypotheses are satisfied, then\footnote{The direct sum of the matrices $Y_1\in \mathcal{M}_{k_1}(\C)$,  $Y_2\in \mathcal{M}_{k_2}(\C)$, ...,  $Y_d\in \mathcal{M}_{k_d}(\C)$ is
\begin{equation}\label{direct-sum-90}
Y_1\oplus Y_2 \oplus \dots \oplus Y_d:=\begin{pmatrix} Y_1 & O_{k_1\times k_2} & \dots & O_{k_1\times k_d} \\
 O_{k_2\times k_1} & Y_2 & \dots &  O_{k_2\times k_d} \\
\vdots & \vdots & \ddots & \vdots \\
 O_{k_d\times k_1} &  O_{k_d\times k_2} & \dots & Y_d
\end{pmatrix}
\in \mathcal{M}_n(\C),
\end{equation}
where $O_{p\times q}\in \mathcal{M}_{p\times q}(\C)$ is the zero matrix and $n=k_1+\dots +k_d$.}  
 $$\mathfrak{C}(M)=\{S (Y_1 \oplus \dots \oplus Y_d)S^{-1}\,|\,Y_1\in \mathcal{M}_{k_1}(\C), ..., Y_d\in \mathcal{M}_{k_d}(\C)\}$$ and it is a vectorial subspace of $\mathcal{M}_n(\C)$ with the dimension $k_1^2+k_2^2+\dots+ k_d^2\geq n$.
\end{thm}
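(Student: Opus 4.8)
The plan is to apply Lemma~\ref{homogeneous-solutions-44} to the homogeneous equation \eqref{commuting-set-A-22}, regarded as the Sylvester equation \eqref{Sylvester} with parameters $A=M$, $B=-M$, $C=O_n$. These matrices are polynomials in $M$, hence they form a commuting set of diagonalizable matrices, and the chosen $S\in\mathfrak{S}(M)$ with $\Delta_M(S)={\bf m}$ simultaneously diagonalizes them, since $S^{-1}(-M)S=-\text{diag}({\bf m})=\text{diag}(-{\bf m})$ and $S^{-1}O_nS=O_n$ are diagonal. So Lemma~\ref{homogeneous-solutions-44}$(i)$ applies with this $S$, and the relevant matrix with respect to $S$ is exactly the block matrix $\Gamma^S$ displayed just before the theorem (equivalently, $\Gamma^S_{rs}=m_r-m_s$ by \eqref{Gamma-def-11}).

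First I would read off the zero pattern of $\Gamma^S$: for $r$ in the $i$-th block and $s$ in the $j$-th block one has $\Gamma^S_{rs}=\lambda_i-\lambda_j$, which vanishes if and only if $i=j$, because $\lambda_1,\dots,\lambda_d$ are pairwise distinct. Hence $\Gamma^S_{rs}\neq 0$ precisely when the indices $r$ and $s$ lie in different diagonal blocks. Next I would translate the description of $\mathcal{X}_h$ in Lemma~\ref{homogeneous-solutions-44}$(i)$: a matrix $Y=[y_{rs}]$ satisfies ``$y_{rs}=0$ whenever $\Gamma^S_{rs}\neq 0$'' exactly when $y_{rs}=0$ for all $r,s$ in different blocks, i.e.\ exactly when $Y=Y_1\oplus\dots\oplus Y_d$ with $Y_i\in\mathcal{M}_{k_i}(\C)$ in the notation \eqref{direct-sum-90}. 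Since $\mathfrak{C}(M)$ is by definition the solution set of \eqref{commuting-set-A-22} in the homogeneous case, Lemma~\ref{homogeneous-solutions-44}$(i)$ gives $\mathfrak{C}(M)=\mathcal{X}_h=\{S(Y_1\oplus\dots\oplus Y_d)S^{-1}\mid Y_i\in\mathcal{M}_{k_i}(\C)\}$, a vectorial subspace of $\mathcal{M}_n(\C)$.

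Finally, for the dimension I would invoke the last assertion of Lemma~\ref{homogeneous-solutions-44}$(i)$: $\dim\mathfrak{C}(M)$ equals the number of zero entries of $\Gamma^S$, and these are precisely the $k_i^2$ entries inside the $i$-th diagonal block for $i=1,\dots,d$, so $\dim\mathfrak{C}(M)=k_1^2+\dots+k_d^2$; one may also see this directly, since $Y\mapsto SYS^{-1}$ is a linear isomorphism of $\mathcal{M}_n(\C)$ carrying the space of block-diagonal matrices of type $(k_1,\dots,k_d)$ onto $\mathfrak{C}(M)$. The inequality $k_1^2+\dots+k_d^2\geq n$ follows from $k_i^2\geq k_i$ (valid since $k_i\geq 1$) together with $k_1+\dots+k_d=n$. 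There is no genuine obstacle here; the only point needing care is that the specific ordering of the eigenvalues producing the star vector ${\bf m}$ is exactly what makes the free-entry pattern of \eqref{linear-matrix-redus-components} coincide with the block-diagonal pattern.
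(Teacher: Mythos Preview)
Your proof is correct and follows exactly the approach the paper intends: the paper's entire argument is the single sentence ``From Lemma~\ref{homogeneous-solutions-44} we obtain the following result,'' and you have simply spelled out the details of that application---identifying the relevant matrix $\Gamma^S$ displayed before the theorem, reading off its zero pattern, and translating Lemma~\ref{homogeneous-solutions-44}$(i)$ into the block-diagonal description. Your added verification of the inequality $k_1^2+\dots+k_d^2\geq n$ (via $k_i^2\geq k_i$) is also correct and fills a small gap the paper leaves implicit.
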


\begin{cor}\label{corolar-1234}
If ${\bf m}=\emph{vec}(\lambda_1 {\bf 1}_{k_1}, \dots, \lambda_d {\bf 1}_{k_d})\in \C^n$ with $\lambda_1, \dots, \lambda_d$ distinct numbers, then
$\mathfrak{C}(\emph{diag}({\bf m}))=\{Y_1 \oplus \dots \oplus Y_d\,|\,Y_1\in \mathcal{M}_{k_1}(\C), ..., Y_d\in \mathcal{M}_{k_d}(\C)\}.$
\end{cor}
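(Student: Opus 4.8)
The plan is to derive Corollary \ref{corolar-1234} as the special case of Theorem \ref{commutator-A-22} in which the diagonalizable matrix is already in diagonal form, so that one may take the diagonalizing matrix to be the identity. First I would observe that $M := \text{diag}({\bf m})$ with ${\bf m}=\text{vec}(\lambda_1 {\bf 1}_{k_1},\dots,\lambda_d {\bf 1}_{k_d})$ is diagonalizable, that $\lambda_1,\dots,\lambda_d$ are precisely its distinct eigenvalues with algebraic multiplicities $k_1,\dots,k_d$, and that ${\bf m}$ is exactly the star vector induced by $M$ in the sense of Lemma \ref{U-P-234}(iii). Hence $S=I_n\in \mathfrak{S}(M)$ satisfies $\Delta_M(S)={\bf m}$, i.e. $S$ is an admissible choice of diagonalizing matrix for the construction preceding Theorem \ref{commutator-A-22}.

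Next I would simply invoke Theorem \ref{commutator-A-22} with this choice $S=I_n$. Its conclusion reads
$$\mathfrak{C}(M)=\{S (Y_1 \oplus \dots \oplus Y_d)S^{-1}\,|\,Y_1\in \mathcal{M}_{k_1}(\C), \dots, Y_d\in \mathcal{M}_{k_d}(\C)\},$$
and substituting $S=S^{-1}=I_n$ collapses the conjugation, giving
$$\mathfrak{C}(\text{diag}({\bf m}))=\{Y_1 \oplus \dots \oplus Y_d\,|\,Y_1\in \mathcal{M}_{k_1}(\C), \dots, Y_d\in \mathcal{M}_{k_d}(\C)\},$$
which is the assertion of the corollary. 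I would also note in passing that the dimension statement of Theorem \ref{commutator-A-22} transfers verbatim, although the corollary as stated only records the set equality.

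There is essentially no obstacle here: the only thing to check is that $I_n$ is a legitimate choice of $S$ with $\Delta_M(I_n)={\bf m}$, which is immediate since $I_n^{-1} M I_n = M = \text{diag}({\bf m})$ and ${\bf m}$ has the prescribed star form. If one wanted a more self-contained argument avoiding the reference to the particular ordering of eigenvalues, one could instead verify the corollary directly: writing $X=[X_{pq}]$ in block form matching the partition $n=k_1+\dots+k_d$, the commutation relation $\text{diag}({\bf m})X=X\text{diag}({\bf m})$ reads $\lambda_p X_{pq}=\lambda_q X_{pq}$ for all blocks, which, since $\lambda_p\neq \lambda_q$ for $p\neq q$, forces $X_{pq}=O$ off the diagonal and leaves the diagonal blocks $X_{pp}\in \mathcal{M}_{k_p}(\C)$ arbitrary — exactly a direct sum $Y_1\oplus\dots\oplus Y_d$. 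Either route is short; I would present the one-line deduction from Theorem \ref{commutator-A-22} as the main proof, perhaps appending the direct block computation as a remark for transparency.
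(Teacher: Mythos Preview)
Your proposal is correct and is precisely the intended deduction: the paper states the result as a corollary of Theorem \ref{commutator-A-22} without separate proof, and your specialization $S=I_n$ (with $\Delta_M(I_n)={\bf m}$) is exactly how that corollary follows. The optional direct block computation you sketch is also valid and matches the relevant-matrix computation preceding Theorem \ref{commutator-A-22}.
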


The above results allow detailing the structure of the set $\mathfrak{S}(M)$ (we recover Theorem 1.3.27 from \cite{horn}).
\begin{lem}\label{horn-1-3-27} Let be $M\in \mathcal{M}_n(\C)$ a diagonalizable matrix. If $\lambda_1, \dots, \lambda_d$ are the distinct eigenvalues of $M$, $k_1, \dots, k_d$ are their algebraic multiplicity, and $S\in \mathfrak{S}(M)$ such that $\Delta_M(S)={\bf m}=\emph{vec}(\lambda_1 {\bf 1}_{k_1}, \dots, \lambda_d {\bf 1}_{k_d})\in \C^n$, then the following statements hold.
\begin{enumerate}[(i)]
\item $V\in \mathfrak{S}(M)$ and $\Delta_M(V)={\bf m}$ if and only if $V=S (Y_1 \oplus \dots \oplus Y_d)$ in which each $Y_j\in \mathcal{M}_{k_j}(\C)$ is nonsingular. 

\item $\mathfrak{S}(M)=\{S (Y_1 \oplus \dots \oplus Y_d)P_{\sigma}\,|\,\sigma\in \mathcal{S}_n,\,Y_j\in \mathcal{M}^{\emph{inv}}_{k_j}(\C)\}$. Also, we have $\Delta_M(S (Y_1 \oplus \dots \oplus Y_d)P_{\sigma})={\bf m}_{\sigma}$.
\end{enumerate}
\end{lem}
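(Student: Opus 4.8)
\textbf{Proof proposal for Lemma \ref{horn-1-3-27}.}

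The plan is to deduce both statements from Corollary \ref{corolar-1234} together with the elementary change-of-variables $V \mapsto S^{-1}V$. First I would observe that an invertible matrix $V$ belongs to $\mathfrak{S}(M)$ with $\Delta_M(V)={\bf m}$ precisely when $V^{-1}MV = \text{diag}({\bf m})$, i.e.\ when $MV = V\,\text{diag}({\bf m})$. Writing $V = SW$ with $W := S^{-1}V \in \mathcal{M}_n^{\text{inv}}(\C)$ and using $S^{-1}MS = \text{diag}({\bf m})$, this is equivalent to $\text{diag}({\bf m})\,W = W\,\text{diag}({\bf m})$, that is, $W$ commutes with $\text{diag}({\bf m})$. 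By Corollary \ref{corolar-1234}, the matrices commuting with $\text{diag}({\bf m})$ are exactly the block-diagonal matrices $Y_1 \oplus \dots \oplus Y_d$ with $Y_j \in \mathcal{M}_{k_j}(\C)$; such a $W$ is invertible if and only if each diagonal block $Y_j$ is nonsingular (the determinant of a block-diagonal matrix is the product of the determinants of its blocks). This yields part $(i)$: $V \in \mathfrak{S}(M)$ with $\Delta_M(V)={\bf m}$ iff $V = S(Y_1 \oplus \dots \oplus Y_d)$ with each $Y_j$ nonsingular.

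For part $(ii)$ I would combine $(i)$ with Lemma \ref{U-P-234}$(i)$--$(ii)$, which tells us that $\Delta_M(\mathfrak{S}(M)) = \{{\bf m}_\sigma \mid \sigma \in \mathcal{S}_n\}$ and that $\Delta_M(VP_\sigma) = (\Delta_M(V))_\sigma$ for any $V \in \mathfrak{S}(M)$. Given an arbitrary $V' \in \mathfrak{S}(M)$, there is $\sigma \in \mathcal{S}_n$ with $\Delta_M(V') = {\bf m}_\sigma$; then $V'P_{\sigma}^{-1} = V'P_{\sigma^{-1}}$ satisfies $\Delta_M(V'P_{\sigma}^{-1}) = ({\bf m}_\sigma)_{\sigma^{-1}} = {\bf m}$, so by part $(i)$ we have $V'P_{\sigma}^{-1} = S(Y_1 \oplus \dots \oplus Y_d)$ for some invertible blocks $Y_j$, hence $V' = S(Y_1 \oplus \dots \oplus Y_d)P_\sigma$. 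Conversely every matrix of this form lies in $\mathfrak{S}(M)$ by part $(i)$ and Lemma \ref{U-P-234}$(i)$, and the identity $\Delta_M(S(Y_1 \oplus \dots \oplus Y_d)P_\sigma) = {\bf m}_\sigma$ follows from $\Delta_M(S(Y_1 \oplus \dots \oplus Y_d)) = {\bf m}$ and Lemma \ref{U-P-234}$(i)$.

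The only point requiring a little care is the claim in $(i)$ that $W = Y_1 \oplus \dots \oplus Y_d$ is invertible exactly when each $Y_j$ is; this is immediate from $\det(Y_1 \oplus \dots \oplus Y_d) = \prod_{j=1}^d \det Y_j$, which follows by expanding along the block structure \eqref{direct-sum-90}. Beyond that, the argument is a routine translation of the commutation condition into block-diagonal form, so I do not anticipate a genuine obstacle; the main thing to get right is bookkeeping with the permutation $\sigma$ versus $\sigma^{-1}$ when moving the permutation matrix from one side of the equation to the other.
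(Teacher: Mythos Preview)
Your proof is correct and follows essentially the same route as the paper: part $(i)$ reduces to characterizing invertible matrices commuting with $\text{diag}({\bf m})$ (the paper phrases this as $VS^{-1}\in\mathfrak{C}(M)$ and invokes Theorem \ref{commutator-A-22}, while you equivalently phrase it as $S^{-1}V\in\mathfrak{C}(\text{diag}({\bf m}))$ and invoke Corollary \ref{corolar-1234}), and part $(ii)$ is handled identically via Lemma \ref{U-P-234}.
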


\begin{proof}
$(i)$ We assume that $V\in \mathfrak{S}(M)$ and $\Delta_M(V)={\bf m}$.
Because $S^{-1}MS=V^{-1}MV$ we obtain that $VS^{-1}\in \mathfrak{C}(M)$. From Theorem \ref{commutator-A-22} we deduce that $V=S (Y_1 \oplus \dots \oplus Y_d)$ with $Y_1\in \mathcal{M}_{k_1}(\C)$, ..., and $Y_d\in \mathcal{M}_{k_d}(\C)$. The fact that $V$ is invertible implies that $Y_1$, ..., $Y_d$ are invertible matrices.  

If $V=S (Y_1 \oplus \dots \oplus Y_d)$ with $Y_1\in \mathcal{M}^{\text{inv}}_{k_1}(\C)$, ..., and $Y_d\in \mathcal{M}^{\text{inv}}_{k_d}(\C)$, then
\begin{align*}
V^{-1}MV & = (Y_1^{-1} \oplus \dots \oplus Y_d^{-1})S^{-1}MS(Y_1 \oplus \dots \oplus Y_d) \\
 & =(Y_1^{-1} \oplus \dots \oplus Y_d^{-1})\text{diag}({\bf m})(Y_1 \oplus \dots \oplus Y_d) \\
& =(Y_1^{-1} \oplus \dots \oplus Y_d^{-1})(\lambda_1I_{k_1}\oplus \dots \oplus \lambda_dI_{k_d})(Y_1 \oplus \dots \oplus Y_d) \\
& =\lambda_1I_{k_1}\oplus \dots \oplus \lambda_dI_{k_d}=\text{diag}({\bf m}).
\end{align*}
$(ii)$ Using $(i)$ and Lemma \ref{U-P-234} we obtain that the matrix $S (Y_1 \oplus \dots \oplus Y_d)P_{\sigma}$, with  $Y_j\in \mathcal{M}^{\text{inv}}_{k_j}(\C)$, is an element of $\mathfrak{S}(M)$.
If $W\in \mathfrak{S}(M)$, then, from Lemma \ref{U-P-234}, there is $\sigma\in \mathcal{S}_n$ such that $\Delta_M(W)={\bf m}_{\sigma}$. We deduce that $\Delta_M(WP_{\sigma^{-1}})={\bf m}$. Consequently, $W=S (Y_1 \oplus \dots \oplus Y_d)P_{\sigma}$ with $Y_j\in \mathcal{M}^{\text{inv}}_{k_j}(\C)$.
\end{proof}

\begin{rem}
If the distinct complex numbers $\lambda_1, \dots, \lambda_n$ are the eigenvalues of $M\in \mathcal{M}_n(\C)$, then $M$ is a diagonalizable matrix (see \cite{horn}, Theorem 1.3.9). There is $S\in \mathfrak{S}(M)$ such that $\Delta_M(S)={\bf m}:=\text{vec}(\lambda_1, \dots, \lambda_n)$. 
$V\in \mathfrak{S}(M)$ with $\Delta_M(V)={\bf m}$  
if and only if $V=S\text{diag}({\bf y})$ with ${\bf y}\in (\C^*)^n$. 
Also, we have
$\mathfrak{S}(M)=\{S\text{diag}({\bf y})P_{\sigma}\,|\,{\bf y}\in (\C^*)^n, \sigma\in \mathcal{S}_n\}$. 
\end{rem}

In what follows we present a generalization of the above result.

\begin{defn} 
$(I_1, \dots, I_q)\subset \mathbb{N}^*\times \dots \times (\mathbb{N}^*)^q$ is a star sequence of indices if $I_1=\{\alpha\in \mathbb{N}^*\,|\,\alpha\leq \mathfrak{c}(1)\}, \,\mathfrak{c}(1)\in \mathbb{N}^*$, and for $j\in \{1,\dots,q-1\}$, $\boldsymbol{\alpha}\in I_j$ there is $\delta_{\boldsymbol{\alpha}}\in \mathbb{N}^*$ such that $I_{j+1}=\{(\boldsymbol{\alpha}, i)\,|\,\boldsymbol{\alpha}\in I_j\,,1\leq i\leq \delta_{\boldsymbol{\alpha}}\}$. 

We note by $\mathfrak{c}(j)$ the number of elements of $I_j$ and, using the lexicographic order on $\mathbb{N}^j$, we have $I_j=\{\boldsymbol{\alpha}^j_1, \dots, \boldsymbol{\alpha}^j_{\mathfrak{c}(j)}\}$ with $\boldsymbol{\alpha}^j_1< \dots< \boldsymbol{\alpha}^j_{\mathfrak{c}(j)}$. 
\end{defn}

\begin{defn}\label{definition-principal-vector88}
$({\bf m}^1,\dots, {\bf m}^q)$ is a star sequence of vectors from $\C^n$ if there is a star sequence of indices $(I_1, \dots, I_q)$ such that we have:

$\bullet\,\,\,{\bf m}^1=\emph{vec}(\lambda_1 {\bf 1}_{k_1}, \dots, \lambda_{\mathfrak{c}(1)} {\bf 1}_{k_{\mathfrak{c}(1)}})$, where $\lambda_1, \dots, \lambda_{\mathfrak{c}(1)}$ are different complex numbers and $k_1+\dots+k_{\mathfrak{c}(1)}=n$;

$\bullet\,\,\,{\bf m}^j=\emph{vec}(\lambda_{\boldsymbol{\alpha}^j_1} {\bf 1}_{k_{\boldsymbol{\alpha}^j_1}}, \dots,  \lambda_{\boldsymbol{\alpha}^j_{\mathfrak{c}(j)}} {\bf 1}_{k_{\boldsymbol{\alpha}^j_{\mathfrak{c}(j)}}}),\,j>1,$
where for $\boldsymbol{\beta}\in I_{j-1}$, the complex numbers $\lambda_{(\boldsymbol{\beta},1)}$, ..., $\lambda_{(\boldsymbol{\beta},\delta_{\beta})}$ are different and $\sum_{i=1}^{\delta_{\boldsymbol{\beta}}}k_{(\boldsymbol{\beta},i)}=k_{\boldsymbol{\beta}}$. 
\end{defn}
A star sequence of vectors has the following properties:
\begin{enumerate}[(i)]
 \item ${\bf m}^1$  is a star vector;
\item $\sum_{\boldsymbol{\beta}\in I_{j}}k_{\boldsymbol{\beta}}=n$;
\item ${\bf m}^j=({\bf m}^{j,\boldsymbol{\alpha}^{j-1}_1}, \dots, {\bf m}^{j,\boldsymbol{\alpha}^{j-1}_{\mathfrak{c}(j-1)}})$, where,  for all $\boldsymbol{\beta}\in I_{j-1}$,  the vector
${\bf m}^{j,\boldsymbol{\beta}}=\text{vec}(\lambda_{(\boldsymbol{\beta},1)} {\bf 1}_{k_{(\boldsymbol{\beta},1)}}, \dots, \lambda_{(\boldsymbol{\beta},\delta_{\beta})} {\bf 1}_{k_{(\boldsymbol{\beta},\delta_{\beta})}})$
is a star vector in $\C^{k_{\boldsymbol{\beta}}}$. 
\end{enumerate}

In the following theorem we use the above notations.

\begin{thm}\label{commute-set-1001}
If $\mathcal{M}=(M_1,\dots,M_q)$ is a sequence of commuting diagonalizable matrices, $({\bf m}^1=\Delta_{M_1}(S), \dots, {\bf m}^q=\Delta_{M_q}(S))$ is a star sequence of induced vectors, $S\in \mathfrak{S}(\mathcal{M})$, then the following statements hold.

\begin{enumerate}[(i)]
\item $V\in \mathcal{M}^\emph{inv}_n(\C)$ satisfies $V\in \mathfrak{S}(\mathcal{M})$, ${\bf m}^1=\Delta_{M_1}(V)$, ..., and ${\bf m}^q=\Delta_{M_q}(V)$ if and only if $V=S(Y_{\boldsymbol{\alpha}^q_1}\oplus \dots \oplus Y_{\boldsymbol{\alpha}^q_{\mathfrak{c}(q)}})$, where $Y_{\boldsymbol{\alpha}^q_p} \in \mathcal{M}^{\emph{inv}}_{k_{{\boldsymbol{\alpha}^q_p}}}(\C)$. 
\item If $\sigma_1\in \mathcal{S}_{k_{\boldsymbol{\alpha}^q_1}}$, ..., $\sigma_{\mathfrak{c}(q)}\in \mathcal{S}_{k_{\boldsymbol{\alpha}^q_{\mathfrak{c}(q)}}}$ and $\sigma:=\sigma_1\oplus\dots\oplus \sigma_{\mathfrak{c}(q)}\in \mathcal{S}_n$ (Appendix \ref{permutation-matrix-22}), then $\Delta_{M_1}(SP_{\sigma})=\Delta_{M_1}(S)$, ..., $\Delta_{M_q}(SP_{\sigma})=\Delta_{M_q}(S)$. 
\item Let be $\mathcal{N}=(M_1,\dots,M_q, N)$ a sequence of commuting diagonalizable matrices, $\Delta_N(S)=\emph{vec}(\widehat{\bf n}^{\boldsymbol{\alpha}^q_1}, \dots, \widehat{\bf n}^{\boldsymbol{\alpha}^q_{\mathfrak{c}(q)}})\in \C^{k_{\boldsymbol{\alpha}^q_1}}\times \dots\times \C^{k_{\boldsymbol{\alpha}^q_{\mathfrak{c}(q)}}}$, $({\bf m}^1, \dots, {\bf m}^q, {\bf n})$ is a sequence of induced vectors by $\mathcal{N}$ with ${\bf n}=\emph{vec}({\bf n}^{\boldsymbol{\alpha}^q_1}, \dots, {\bf n}^{\boldsymbol{\alpha}^q_{\mathfrak{c}(q)}})\in \C^{k_{\boldsymbol{\alpha}^q_1}}\times \dots\times \C^{k_{\boldsymbol{\alpha}^q_{\mathfrak{c}(q)}}}$, then the vectors   
 ${\bf n}^{\boldsymbol{\alpha}^q_i}$ and $\widehat{\bf n}^{\boldsymbol{\alpha}^q_i}$, $i\in \{1,\dots, \mathfrak{c}(q)\}$, have the same components (possibly in a different order).
\end{enumerate}
\end{thm}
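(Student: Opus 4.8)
The plan is to first isolate a purely combinatorial fact about the star sequence of vectors $({\bf m}^1,\dots,{\bf m}^q)$ and its associated star sequence of indices $(I_1,\dots,I_q)$, and then read off (i), (ii), (iii) as block–diagonal matrix bookkeeping. The fact is: \emph{(a)} for every $j\in\{1,\dots,q\}$ the vector ${\bf m}^j$ is constant on each of the $\mathfrak{c}(q)$ consecutive blocks of lengths $k_{\boldsymbol{\alpha}^q_1},\dots,k_{\boldsymbol{\alpha}^q_{\mathfrak{c}(q)}}$ cut out by $I_q$; and \emph{(b)} two indices $r,s\in\{1,\dots,n\}$ satisfy $m^j_r=m^j_s$ for all $j$ if and only if they belong to the same $I_q$-block. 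Both are proved by induction on $q$ from Definition \ref{definition-principal-vector88} and the listed property ${\bf m}^j=({\bf m}^{j,\boldsymbol{\alpha}^{j-1}_1},\dots,{\bf m}^{j,\boldsymbol{\alpha}^{j-1}_{\mathfrak{c}(j-1)}})$ with each piece a star vector: the partition of $\{1,\dots,n\}$ induced by $I_j$ refines the one induced by $I_{j-1}$, and ${\bf m}^j$ is constant on the $I_{j-1}$-blocks, so by iteration ${\bf m}^1,\dots,{\bf m}^q$ are all constant on $I_q$-blocks, which is (a); conversely, inside a common $I_{j-1}$-block the vector ${\bf m}^j$ takes \emph{distinct} values on distinct $I_j$-subblocks, so indices in different $I_q$-blocks must disagree in some ${\bf m}^j$, which is (b).

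For (i), the ``if'' part is a direct computation: for $V=S\big(Y_{\boldsymbol{\alpha}^q_1}\oplus\dots\oplus Y_{\boldsymbol{\alpha}^q_{\mathfrak{c}(q)}}\big)$ with each $Y_{\boldsymbol{\alpha}^q_p}\in\mathcal{M}^{\text{inv}}_{k_{\boldsymbol{\alpha}^q_p}}(\C)$, the matrix $V$ is invertible and
\[
V^{-1}M_jV=\big(Y_{\boldsymbol{\alpha}^q_1}^{-1}\oplus\dots\big)\,\text{diag}({\bf m}^j)\,\big(Y_{\boldsymbol{\alpha}^q_1}\oplus\dots\big);
\]
by (a), $\text{diag}({\bf m}^j)$ is block-scalar for the $I_q$-partition, hence commutes with $Y_{\boldsymbol{\alpha}^q_1}\oplus\dots$, so the right-hand side collapses to $\text{diag}({\bf m}^j)$, i.e. $V\in\mathfrak{S}(M_j)$ and $\Delta_{M_j}(V)={\bf m}^j$ for every $j$. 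For the ``only if'' part, if $V\in\mathfrak{S}(\mathcal{M})$ with $\Delta_{M_j}(V)={\bf m}^j$, then $S^{-1}M_jS=\text{diag}({\bf m}^j)=V^{-1}M_jV$, so $Z:=S^{-1}V$ commutes with every $\text{diag}({\bf m}^j)$; the relation $\text{diag}({\bf m}^j)Z=Z\,\text{diag}({\bf m}^j)$ reads $Z_{rs}(m^j_r-m^j_s)=0$, and by (b) this forces $Z_{rs}=0$ whenever $r,s$ lie in different $I_q$-blocks, i.e. $Z=Y_{\boldsymbol{\alpha}^q_1}\oplus\dots\oplus Y_{\boldsymbol{\alpha}^q_{\mathfrak{c}(q)}}$; invertibility of $V$ (hence of $Z$) makes each $Y_{\boldsymbol{\alpha}^q_p}$ invertible, and $V=SZ$ has the claimed form. (This is the commutant computation behind Theorem \ref{commutator-A-22} and Corollary \ref{corolar-1234}, applied to the common refinement $I_q$.)

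Part (ii) is then immediate by taking $Y_{\boldsymbol{\alpha}^q_p}:=P_{\sigma_p}$ in (i): $P_{\sigma_1}\oplus\dots\oplus P_{\sigma_{\mathfrak{c}(q)}}=P_{\sigma}$ with $\sigma=\sigma_1\oplus\dots\oplus\sigma_{\mathfrak{c}(q)}$, so $SP_\sigma\in\mathfrak{S}(\mathcal{M})$ and $\Delta_{M_j}(SP_\sigma)={\bf m}^j=\Delta_{M_j}(S)$ for all $j$ (equivalently $\Delta_{M_j}(SP_\sigma)=({\bf m}^j)_\sigma={\bf m}^j$ by Lemma \ref{U-P-234} and (a), since $\sigma$ permutes only inside $I_q$-blocks). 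For (iii), the hypothesis that $({\bf m}^1,\dots,{\bf m}^q,{\bf n})$ is a sequence of induced vectors by $\mathcal{N}$ supplies $V\in\mathfrak{S}(\mathcal{N})\subseteq\mathfrak{S}(\mathcal{M})$ with $\Delta_{M_j}(V)={\bf m}^j$ and $\Delta_N(V)={\bf n}$, so by (i) $V=S\big(Y_{\boldsymbol{\alpha}^q_1}\oplus\dots\oplus Y_{\boldsymbol{\alpha}^q_{\mathfrak{c}(q)}}\big)$ with invertible blocks. Using $S^{-1}NS=\text{diag}(\Delta_N(S))=\text{diag}(\widehat{\bf n}^{\boldsymbol{\alpha}^q_1})\oplus\dots\oplus\text{diag}(\widehat{\bf n}^{\boldsymbol{\alpha}^q_{\mathfrak{c}(q)}})$ we obtain
\[
\text{diag}({\bf n})=V^{-1}NV=\bigoplus_{p=1}^{\mathfrak{c}(q)}Y_{\boldsymbol{\alpha}^q_p}^{-1}\,\text{diag}(\widehat{\bf n}^{\boldsymbol{\alpha}^q_p})\,Y_{\boldsymbol{\alpha}^q_p},
\]
while also $\text{diag}({\bf n})=\text{diag}({\bf n}^{\boldsymbol{\alpha}^q_1})\oplus\dots\oplus\text{diag}({\bf n}^{\boldsymbol{\alpha}^q_{\mathfrak{c}(q)}})$; comparing $p$-th diagonal blocks gives $\text{diag}({\bf n}^{\boldsymbol{\alpha}^q_p})=Y_{\boldsymbol{\alpha}^q_p}^{-1}\,\text{diag}(\widehat{\bf n}^{\boldsymbol{\alpha}^q_p})\,Y_{\boldsymbol{\alpha}^q_p}$, and two similar diagonal matrices have the same multiset of diagonal entries, so ${\bf n}^{\boldsymbol{\alpha}^q_p}$ and $\widehat{\bf n}^{\boldsymbol{\alpha}^q_p}$ agree up to a reordering of components, as claimed.

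The only genuinely delicate point is the combinatorial step (a)–(b): one must unwind Definition \ref{definition-principal-vector88} to verify that the $I_q$-partition of $\{1,\dots,n\}$ is exactly the partition into common level sets of ${\bf m}^1,\dots,{\bf m}^q$. Once that is in hand, (i)–(iii) reduce to routine block-diagonal algebra together with the elementary fact that similar diagonal matrices share their spectrum.
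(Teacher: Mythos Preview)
Your proof is correct, but the route differs from the paper's. One wording slip: in your argument for (a) you write ``${\bf m}^j$ is constant on the $I_{j-1}$-blocks,'' which is false in general (on each $I_{j-1}$-block, ${\bf m}^j$ is a star vector, not a constant). What you need, and clearly intend, is that ${\bf m}^j$ is constant on the $I_j$-blocks; since the $I_q$-partition refines every $I_j$-partition, all ${\bf m}^j$ are then constant on $I_q$-blocks. With that correction your combinatorial lemma (a)--(b) stands: the $I_q$-blocks are exactly the common level sets of $({\bf m}^1,\dots,{\bf m}^q)$.

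The paper proves (i) by induction on the level $r$: the base case is Lemma~\ref{horn-1-3-27} for $M_1$, and the inductive step passes from an $I_r$-block decomposition of $S^{-1}V$ to an $I_{r+1}$-block decomposition by observing that each $Y_{\boldsymbol{\alpha}^r_j}$ must commute with $\text{diag}({\bf m}^{r+1,\boldsymbol{\alpha}^r_j})$ and invoking Corollary~\ref{corolar-1234}. You instead isolate (a)--(b) once and for all and then read off the commutant of $\{\text{diag}({\bf m}^1),\dots,\text{diag}({\bf m}^q)\}$ in a single step via the entrywise relation $Z_{rs}(m^j_r-m^j_s)=0$. This is more direct and makes the role of the $I_q$-partition transparent; the paper's induction, on the other hand, stays closer to the recursive Definition~\ref{definition-principal-vector88} and reuses the single-matrix results (Lemma~\ref{horn-1-3-27}, Corollary~\ref{corolar-1234}) as black boxes. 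For (ii) and (iii) your arguments coincide with the paper's.
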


\begin{proof}
$(i)$ We assume that $V=S(Y_{\boldsymbol{\alpha}^q_1}\oplus \dots \oplus Y_{\boldsymbol{\alpha}^q_{\mathfrak{c}(q)}})$, where $Y_{\boldsymbol{\alpha}^q_p} \in \mathcal{M}^{\text{inv}}_{k_{{\boldsymbol{\alpha}^q_p}}}(\C)$.
For $r\in \{1,\dots,q\}$, because $({\bf m}^1, \dots, {\bf m}^q)$ is a star sequence of vectors, we can write $Y_{\boldsymbol{\alpha}^q_1}\oplus \dots \oplus Y_{\boldsymbol{\alpha}^q_{\mathfrak{c}(q)}}=Y_{\boldsymbol{\alpha}^r_1}\oplus \dots \oplus Y_{\boldsymbol{\alpha}^r_{\mathfrak{c}(r)}}$ with  $Y_{\boldsymbol{\alpha}^r_p} \in \mathcal{M}^{\text{inv}}_{k_{{\boldsymbol{\alpha}^r_p}}}(\C)$. We have
\begin{align*}
&V^{-1}M_{r}V
 = (Y^{-1}_{\boldsymbol{\alpha}^r_1}\oplus \dots \oplus Y^{-1}_{\boldsymbol{\alpha}^r_{\mathfrak{c}(r)}})S^{-1}M_{r}S(Y_{\boldsymbol{\alpha}^r_1}\oplus \dots \oplus Y_{\boldsymbol{\alpha}^r_{\mathfrak{c}(r)}}) \\
& =(Y^{-1}_{\boldsymbol{\alpha}^r_1}\oplus \dots \oplus Y^{-1}_{\boldsymbol{\alpha}^r_{\mathfrak{c}(r)}})\text{diag}({\bf m}^r)(Y_{\boldsymbol{\alpha}^r_1}\oplus \dots \oplus Y_{\boldsymbol{\alpha}^r_{\mathfrak{c}(r)}})\\
& = (Y^{-1}_{\boldsymbol{\alpha}^r_1}\oplus \dots \oplus Y^{-1}_{\boldsymbol{\alpha}^r_{\mathfrak{c}(r)}})
(\lambda_{\boldsymbol{\alpha}^r_1}I_{k_{{\boldsymbol{\alpha}^r_1}}}\oplus \dots \oplus \lambda_{\boldsymbol{\alpha}^r_{\mathfrak{c}(r)}}I_{k_{{\boldsymbol{\alpha}^r_{\mathfrak{c}(r)}}}})
(Y_{\boldsymbol{\alpha}^r_1}\oplus \dots \oplus Y_{\boldsymbol{\alpha}^r_{\mathfrak{c}(r)}})\\
& = (\lambda_{\boldsymbol{\alpha}^r_1}Y^{-1}_{\boldsymbol{\alpha}^r_1}Y_{\boldsymbol{\alpha}^r_1})\oplus \dots \oplus (\lambda_{\boldsymbol{\alpha}^r_{\mathfrak{c}(r)}}Y^{-1}_{\boldsymbol{\alpha}^r_{\mathfrak{c}(r)}}Y_{\boldsymbol{\alpha}^r_{\mathfrak{c}(r)}}) = \text{diag}({\bf m}^r).
\end{align*}

We assume that $V$ satisfies $V^{-1}M_jV=\text{diag}({\bf m}^j)$, for all $j\in \{1,\dots,q\}$. We prove by induction that for all $r\in \{1,\dots,q\}$ we have $V=S(Y_{\boldsymbol{\alpha}^r_1}\oplus \dots \oplus Y_{\boldsymbol{\alpha}^r_{\mathfrak{c}(r)}})$, where $Y_{\boldsymbol{\alpha}^r_p} \in \mathcal{M}^{\text{inv}}_{k_{{\boldsymbol{\alpha}^r_p}}}(\C)$. For $r=1$ the announced result is proved in Lemma \ref{horn-1-3-27}. We assume that we have $V=S(Y_{\boldsymbol{\alpha}^r_1}\oplus \dots \oplus Y_{\boldsymbol{\alpha}^r_{\mathfrak{c}(r)}})$ and we obtain 
\begin{align*}
\text{diag}({\bf m}^{r+1}) & =V^{-1}M_{r+1}V  \\
& = (Y^{-1}_{\boldsymbol{\alpha}^r_1}\oplus \dots \oplus Y^{-1}_{\boldsymbol{\alpha}^r_{\mathfrak{c}(r)}})S^{-1}M_{r+1}S(Y_{\boldsymbol{\alpha}^r_1}\oplus \dots \oplus Y_{\boldsymbol{\alpha}^r_{\mathfrak{c}(r)}}) \\
& =(Y^{-1}_{\boldsymbol{\alpha}^r_1}\oplus \dots \oplus Y^{-1}_{\boldsymbol{\alpha}^r_{\mathfrak{c}(r)}})\text{diag}({\bf m}^{r+1})(Y_{\boldsymbol{\alpha}^r_1}\oplus \dots \oplus Y_{\boldsymbol{\alpha}^r_{\mathfrak{c}(r)}}).
\end{align*}
We write
$\text{diag}({\bf m}^{r+1})=\text{diag}({\bf m}^{r+1,\boldsymbol{\alpha}^r_1})\oplus \dots \oplus \text{diag}({\bf m}^{r+1,\boldsymbol{\alpha}^r_{\mathfrak{c}(r)}}) $ and we obtain that  $Y_{\boldsymbol{\alpha}^r_j}\in \mathfrak{C}(\text{diag}({\bf m}^{r+1,\boldsymbol{\alpha}^r_j}))$, $j\in \{1, \dots, \mathfrak{c}(r)\}$.
Using Corollary \ref{corolar-1234} and the fact that
${\bf m}^{r+1,\boldsymbol{\alpha}^r_j}=\text{vec}(\lambda_{(\boldsymbol{\alpha}^r_j,1)} {\bf 1}_{k_{(\boldsymbol{\alpha}^r_j,1)}}, \dots, \lambda_{(\boldsymbol{\alpha}^r_j,\delta_{\boldsymbol{\alpha}^r_j})} {\bf 1}_{k_{(\boldsymbol{\alpha}^r_j,\delta_{\boldsymbol{\alpha}^r_j})}})$ we deduce that $Y_{\boldsymbol{\alpha}^r_j}=Y_{(\boldsymbol{\alpha}^r_j,1)}\oplus \dots \oplus Y_{(\boldsymbol{\alpha}^r_j,\delta_{\boldsymbol{\alpha}^r_j})}$. Consequently, $V=S(Y_{\boldsymbol{\alpha}^{r+1}_1}\oplus \dots \oplus Y_{\boldsymbol{\alpha}^{r+1}_{\mathfrak{c}(r+1)}})$.

$(ii)$ Because $P_{\sigma}=P_{\sigma_1}\oplus \dots \oplus P_{\sigma_{\mathfrak{c}(q)}}$, with $P_{\sigma_p} \in \mathcal{M}^{\text{inv}}_{k_{{\boldsymbol{\alpha}^q_p}}}(\C)$ we can apply $(i)$.

$(iii)$ There is $V\in \mathfrak{S}(\mathcal{N})$ such that ${\bf m}^j= \Delta_{M_j}(V)$, $\forall j\in\{1,\dots, q\}$, and ${\bf n}=\Delta_{N}(V)$.  From $(i)$ we have $V=S(Y_{\boldsymbol{\alpha}^q_1}\oplus \dots \oplus Y_{\boldsymbol{\alpha}^q_{\mathfrak{c}(q)}})$, where $Y_{\boldsymbol{\alpha}^q_p} \in \mathcal{M}^{\text{inv}}_{k_{{\boldsymbol{\alpha}^q_p}}}(\C)$.
\begin{align*}
& \text{diag}({\bf n}) =  \text{diag}({\bf n}^{\boldsymbol{\alpha}^q_1})\oplus \dots \oplus \text{diag}({\bf n}^{\boldsymbol{\alpha}^q_{\mathfrak{c}(q)}}) \\
& = (Y^{-1}_{\boldsymbol{\alpha}^q_1}\oplus \dots \oplus Y^{-1}_{\boldsymbol{\alpha}^q_{\mathfrak{c}(q)}})S^{-1}NS(Y_{\boldsymbol{\alpha}^q_1}\oplus \dots \oplus Y_{\boldsymbol{\alpha}^q_{\mathfrak{c}(q)}}) \\
& = (Y^{-1}_{\boldsymbol{\alpha}^q_1}\oplus \dots \oplus Y^{-1}_{\boldsymbol{\alpha}^q_{\mathfrak{c}(q)}})(\text{diag}(\widehat{\bf n}^{\boldsymbol{\alpha}^q_1})\oplus \dots \oplus \text{diag}(\widehat{\bf n}^{\boldsymbol{\alpha}^q_{\mathfrak{c}(q)}}))(Y_{\boldsymbol{\alpha}^q_1}\oplus \dots \oplus Y_{\boldsymbol{\alpha}^q_{\mathfrak{c}(q)}})\\
& = Y^{-1}_{\boldsymbol{\alpha}^q_1}\text{diag}(\widehat{\bf n}^{\boldsymbol{\alpha}^q_1})Y_{\boldsymbol{\alpha}^q_1}\oplus \dots\oplus Y^{-1}_{\boldsymbol{\alpha}^q_{\mathfrak{c}(q)}}\text{diag}(\widehat{\bf n}^{\boldsymbol{\alpha}^q_{\mathfrak{c}(q)}})Y_{\boldsymbol{\alpha}^q_{\mathfrak{c}(q)}}.
\end{align*}
Consequently, $\text{diag}({\bf n}^{\boldsymbol{\alpha}^q_i})=Y^{-1}_{\boldsymbol{\alpha}^q_i}\text{diag}(\widehat{\bf n}^{\boldsymbol{\alpha}^q_i})Y_{\boldsymbol{\alpha}^q_i}$ that proves the result. 
\end{proof}

\begin{thm}\label{description-s(M)-321}
If $\mathcal{M}=(M_1,\dots,M_q)$ is a sequence of commuting diagonalizable matrices, $({\bf m}^1=\Delta_{M_1}(S), \dots, {\bf m}^q=\Delta_{M_q}(S))$ is a star sequence of induced vectors, $S\in \mathfrak{S}(\mathcal{M})$, then the following statements hold.
\begin{enumerate}[(i)]
\item If $({\bf n}^1, \dots, {\bf n}^q)$ is a sequence of induced vectors, then there is $\sigma\in \mathcal{S}_n$ such that ${\bf n}^j={\bf m}^j_{\sigma}$, $\forall j\in \{1, \dots, q\}$.  
\item $\mathfrak{S}(\mathcal{M})=\{S(Y_{\boldsymbol{\alpha}^q_1}\oplus \dots \oplus Y_{\boldsymbol{\alpha}^q_{\mathfrak{c}(q)}})P_{\sigma}|\sigma\in \mathcal{S}_n,Y_{\boldsymbol{\alpha}^q_p} \in \mathcal{M}^{\emph{inv}}_{k_{{\boldsymbol{\alpha}^q_p}}}(\C)\}$. For all $j\in \{1, \dots, q\}$ we have $\Delta_{M_j}(S(Y_{\boldsymbol{\alpha}^q_1}\oplus \dots \oplus Y_{\boldsymbol{\alpha}^q_{\mathfrak{c}(q)}})P_{\sigma})={\bf m}^j_{\sigma}$.
\end{enumerate}
\end{thm}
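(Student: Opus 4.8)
The plan is to obtain both parts from Theorem~\ref{commute-set-1001}$(i)$ and Lemma~\ref{U-P-234}$(i)$, isolating part $(i)$ as the one genuinely new ingredient and then deducing part $(ii)$ formally. The key reduction is the observation that, for any $W\in\mathfrak{S}(\mathcal{M})$, knowing $\Delta_{M_j}(W)={\bf m}^j_{\sigma}$ for a common $\sigma$ is equivalent to $WP_{\sigma^{-1}}$ inducing exactly the star sequence $({\bf m}^1,\dots,{\bf m}^q)$, which is precisely the situation governed by Theorem~\ref{commute-set-1001}$(i)$.

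For part $(i)$, let $V\in\mathfrak{S}(\mathcal{M})$ realize the given induced sequence, i.e. $\Delta_{M_j}(V)={\bf n}^j$ for all $j$, and set $T:=S^{-1}V\in\mathcal{M}^{\text{inv}}_n(\C)$. Since $M_j=S\,\text{diag}({\bf m}^j)\,S^{-1}$, conjugation by $V$ gives $\text{diag}({\bf n}^j)=T^{-1}\text{diag}({\bf m}^j)T$, hence $\text{diag}({\bf m}^j)\,T=T\,\text{diag}({\bf n}^j)$ for every $j$. Comparing $(r,s)$ entries, $T_{rs}\neq 0$ forces $m^j_r=n^j_s$ for all $j$; therefore the $s$-th column of $T$ is supported only on those row indices $r$ whose tuple $(m^1_r,\dots,m^q_r)$ equals $\boldsymbol{\mu}(s):=(n^1_s,\dots,n^q_s)$, and $\boldsymbol{\mu}(s)$ is an actual joint-eigenvalue tuple of $\{\text{diag}({\bf m}^j)\}_j$ because $T$ is invertible (so the column is nonzero). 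For a fixed tuple $\boldsymbol{\mu}$, the columns of $T$ carrying that tuple are linearly independent and lie in the coordinate subspace indexed by $\{r:(m^1_r,\dots,m^q_r)=\boldsymbol{\mu}\}$, so there are at most $\#\{r:(m^1_r,\dots,m^q_r)=\boldsymbol{\mu}\}$ of them; summing over all tuples and using $n=\sum_{\boldsymbol{\mu}}\#\{r:(m^1_r,\dots,m^q_r)=\boldsymbol{\mu}\}$ forces equality for every $\boldsymbol{\mu}$. Thus the multisets $\{(n^1_s,\dots,n^q_s)\}_{s=1}^n$ and $\{(m^1_r,\dots,m^q_r)\}_{r=1}^n$ coincide, and any $\sigma\in\mathcal{S}_n$ pairing equal tuples satisfies ${\bf n}^j={\bf m}^j_{\sigma}$ for all $j$ simultaneously.

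Granting $(i)$, part $(ii)$ is formal. For $\supseteq$: put $V_0:=S(Y_{\boldsymbol{\alpha}^q_1}\oplus\dots\oplus Y_{\boldsymbol{\alpha}^q_{\mathfrak{c}(q)}})$; the ``if'' direction of Theorem~\ref{commute-set-1001}$(i)$ gives $V_0\in\mathfrak{S}(\mathcal{M})$ with $\Delta_{M_j}(V_0)={\bf m}^j$, and Lemma~\ref{U-P-234}$(i)$ applied to each $M_j$ then gives $V_0P_{\sigma}\in\bigcap_j\mathfrak{S}(M_j)=\mathfrak{S}(\mathcal{M})$ with $\Delta_{M_j}(V_0P_{\sigma})={\bf m}^j_{\sigma}$, which is also the displayed formula. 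For $\subseteq$: given $W\in\mathfrak{S}(\mathcal{M})$, apply $(i)$ to the induced sequence $(\Delta_{M_j}(W))_j$ to get $\sigma$ with $\Delta_{M_j}(W)={\bf m}^j_{\sigma}$; then $WP_{\sigma^{-1}}\in\mathfrak{S}(\mathcal{M})$ by Lemma~\ref{U-P-234}$(i)$, and $\Delta_{M_j}(WP_{\sigma^{-1}})=({\bf m}^j_{\sigma})_{\sigma^{-1}}={\bf m}^j$ for all $j$, so the ``only if'' direction of Theorem~\ref{commute-set-1001}$(i)$ yields $WP_{\sigma^{-1}}=S(Y_{\boldsymbol{\alpha}^q_1}\oplus\dots\oplus Y_{\boldsymbol{\alpha}^q_{\mathfrak{c}(q)}})$ with invertible blocks, i.e. $W=S(Y_{\boldsymbol{\alpha}^q_1}\oplus\dots\oplus Y_{\boldsymbol{\alpha}^q_{\mathfrak{c}(q)}})P_{\sigma}$.

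The main obstacle is part $(i)$: one must show that the intertwiner $T=S^{-1}V$, which only relates two families of diagonal matrices, is forced to be monomial relative to their joint-eigenvalue decompositions — equivalently, that the two families have the same multiset of joint-eigenvalue tuples — and it is exactly here that invertibility of $T$ enters, via the dimension count above. The star-sequence hypothesis on $({\bf m}^1,\dots,{\bf m}^q)$ is not needed for $(i)$ itself, but it is what lets $(ii)$ invoke Theorem~\ref{commute-set-1001}$(i)$ to upgrade the ``same chamber as $S$'' conclusion into the explicit block-diagonal normal form. An alternative to the counting argument is an induction on $q$ mirroring the proof of Theorem~\ref{commute-set-1001}$(i)$, successively refining the common coordinate blocks of $M_1,\dots,M_{q-1}$ by the action of $M_q$; this also works but is not obviously shorter.
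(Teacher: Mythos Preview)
Your proof is correct. Part $(ii)$ is handled exactly as in the paper: apply $(i)$ to strip off a permutation, then invoke Theorem~\ref{commute-set-1001}$(i)$ for the block-diagonal form, and use Lemma~\ref{U-P-234}$(i)$ for the reverse inclusion and the displayed formula.

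Part $(i)$, however, is done differently. The paper proves it by induction on the length $r$ of the initial segment $(M_1,\dots,M_r)$: at each step it already has $\sigma_r$ with ${\bf n}^j={\bf m}^j_{\sigma_r}$ for $j\le r$, uses Theorem~\ref{commute-set-1001}$(i)$ to write $VP_{\sigma_r^{-1}}=S(Y_{\boldsymbol{\alpha}^r_1}\oplus\cdots)$, and then Theorem~\ref{commute-set-1001}$(iii)$ to see that on each block the components of ${\bf n}^{r+1}$ and ${\bf m}^{r+1}$ agree up to a block-permutation $\tau$, setting $\sigma_{r+1}=\tau\circ\sigma_r$. Your argument bypasses this entirely: the intertwiner relation $\text{diag}({\bf m}^j)T=T\,\text{diag}({\bf n}^j)$ plus the column-support/dimension count shows directly that the two families have the same multiset of joint-eigenvalue tuples, hence a single $\sigma$ works for all $j$ at once. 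Your route is shorter, does not use the star-sequence hypothesis (as you observe), and does not depend on Theorem~\ref{commute-set-1001} at all for $(i)$; the paper's route has the virtue of staying inside the block-refinement machinery it has already built, and makes explicit how $\sigma$ is assembled from permutations compatible with the nested block structure.
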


\begin{proof}
$(i)$ There is $V\in \mathfrak{S}(\mathcal{M})$ such that, $\forall j\in \{1, \dots, q\}$, we have ${\bf n}^j=\Delta_{M_j}(V)$. 
$\mathcal{M}_r=(M_1,\dots, M_r)$, $r\in \{1, \dots, q\}$, is a sequence of commuting diagonalizable matrices, $({\bf m}^1, \dots, {\bf m}^r)$ is a star sequence of induced vectors by $\mathcal{M}_r$, and $({\bf n}^1, \dots,$ ${\bf n}^r)$ is a sequence of induced vectors by $\mathcal{M}_r$. 
We prove by induction the existence of $\sigma_r\in \mathcal{S}_n$ such that ${\bf n}^j={\bf m}^j_{{\sigma}_r}$, $\forall j\in \{1, \dots, r\}$.

For $r=1$ the announced result is proved in Lemma \ref{U-P-234}. We assume that there exists $\sigma_r\in \mathcal{S}_n$ such that ${\bf n}^j={\bf m}^j_{{\sigma}_r}$, $\forall j\in \{1, \dots, r\}$. From Lemma \ref{U-P-234} we deduce that ${\bf m}^1=\Delta_{M_1}(VP_{\sigma_r^{-1}}), \dots, {\bf m}^r=\Delta_{M_r}(VP_{\sigma_r^{-1}})$.  From Theorem \ref{commute-set-1001} we have that $VP_{\sigma_r^{-1}}= S(Y_{\boldsymbol{\alpha}^{r}_1}\oplus \dots \oplus Y_{\boldsymbol{\alpha}^{r}_{\mathfrak{c}(r)}})$ with $Y_{\boldsymbol{\alpha}^r_p} \in \mathcal{M}^{\text{inv}}_{k_{{\boldsymbol{\alpha}^r_p}}}(\C)$.

We remember that ${\bf m}^{r+1}=({\bf m}^{r+1,\boldsymbol{\alpha}^{r}_1}, \dots, {\bf m}^{r+1,\boldsymbol{\alpha}^{r}_{\mathfrak{c}(r)}})$, where ${\bf m}^{r+1, \boldsymbol{\beta}}$, $\boldsymbol{\beta}\in I_r$, is a star vector in $\C^{k_{\boldsymbol{\beta}}}$.
We note by ${\bf p}:={\bf n}^{r+1}_{\sigma_r^{-1}}=\Delta_{M_{r+1}}(VP_{\sigma_r^{-1}})$ and we write ${\bf p}=({\bf p}^{\boldsymbol{\alpha}^{r}_1}, \dots, {\bf p}^{\boldsymbol{\alpha}^{r}_{\mathfrak{c}(r)}})$, where ${\bf p}^{\boldsymbol{\beta}}\in \C^{k_{\boldsymbol{\beta}}}$, $\boldsymbol{\beta}\in I_r$. 
From Theorem \ref{commute-set-1001} we deduce that ${\bf p}^{\boldsymbol{\beta}}$ and ${\bf m}^{r+1,\boldsymbol{\beta}}$ have the same components (possibly in a different order). There exists $\tau_{\boldsymbol{\beta}}\in \mathcal{S}_{k_{\boldsymbol{\beta}}}$ such that ${\bf p}^{\boldsymbol{\beta}}={\bf m}^{r+1,\boldsymbol{\beta}}_{\tau_{\boldsymbol{\beta}}}$. If $\tau=\tau_{\boldsymbol{\alpha}^r_1}\oplus \dots \oplus \tau_{\boldsymbol{\alpha}^r_{\mathfrak{c}(r)}}\in \mathcal{S}_n$, then ${\bf n}^{r+1}={\bf p}_{\sigma_r}={\bf m}^{r+1}_{\tau\circ \sigma_r}$.

For $j\leq r$ we can write $\tau=\tau_{\boldsymbol{\alpha}^j_1}\oplus \dots \oplus \tau_{\boldsymbol{\alpha}^j_{\mathfrak{c}(j)}}$ with $\tau_{\boldsymbol{\alpha}^j_i}\in \mathcal{S}_{k_{\boldsymbol{\alpha}^j_i}}$. 
Using Theorem \ref{commute-set-1001} we have that ${\bf m}^j_{\tau}={\bf m}^j$. We deduce that ${\bf m}^j_{\tau\circ \sigma_r}={\bf m}_{\sigma_r}^j={\bf n}^j$.

$(ii)$ Let be $W\in \mathfrak{S}(\mathcal{M})$. From $(i)$ there is $\sigma\in \mathcal{S}_n$ such that ${\bf m}^j=\Delta_{M_j}(WP_{\sigma^{-1}})$, $\forall j\in \{1, \dots, q\}$. From Theorem \ref{commute-set-1001} we have that $WP_{\sigma^{-1}}=S(Y_{\boldsymbol{\alpha}^q_1}\oplus \dots \oplus Y_{\boldsymbol{\alpha}^q_{\mathfrak{c}(q)}})$, where $Y_{\boldsymbol{\alpha}^q_p} \in \mathcal{M}^{\text{inv}}_{k_{{\boldsymbol{\alpha}^q_p}}}(\C)$. We obtain that $W$ has the announced form. 
From hypotheses, Theorem \ref{commute-set-1001}, and Lemma \ref{U-P-234} we have that the matrix $S(Y_{\boldsymbol{\alpha}^q_1}\oplus \dots \oplus Y_{\boldsymbol{\alpha}^q_{\mathfrak{c}(q)}})P_{\sigma}$, with $Y_{\boldsymbol{\alpha}^q_p} \in \mathcal{M}^{\text{inv}}_{k_{{\boldsymbol{\alpha}^q_p}}}(\C)$, is an element of  $\mathfrak{S}(\mathcal{M})$.
\end{proof}

\begin{thm}\label{existence-principal-induced-444}
If $\mathcal{M}=(M_1,\dots,M_q)$ is a sequence of commuting diagonalizable matrices, then there exists $S\in \mathfrak{S}(\mathcal{M})$ such that $(\Delta_{M_1}(S), \dots, \Delta_{M_q}(S))$ is a star sequence of induced vectors.
\end{thm}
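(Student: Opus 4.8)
The plan is to start from an arbitrary simultaneous diagonalizer and then permute its columns so that the resulting induced vectors become nested in the way demanded by Definition \ref{definition-principal-vector88}; the entire content of the statement is combinatorial, the algebraic input being already available.

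First I would invoke the result cited in Section \ref{main-section} (\cite{horn}, Theorem 1.3.21): since $\mathcal{M}$ is a commuting set of diagonalizable matrices, $\mathfrak{S}(\mathcal{M})=\bigcap_{j=1}^q\mathfrak{S}(M_j)\neq\emptyset$, so pick $V\in\mathfrak{S}(\mathcal{M})$ and set ${\bf n}^j=\Delta_{M_j}(V)$ for $j\in\{1,\dots,q\}$. By Lemma \ref{U-P-234}$(i)$, for every $\sigma\in\mathcal{S}_n$ we have $VP_{\sigma}\in\mathfrak{S}(M_j)$ for each $j$, hence $VP_{\sigma}\in\mathfrak{S}(\mathcal{M})$, and $\Delta_{M_j}(VP_{\sigma})={\bf n}^j_{\sigma}$. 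Thus it suffices to prove the purely combinatorial claim: for any ${\bf n}^1,\dots,{\bf n}^q\in\C^n$ there is $\sigma\in\mathcal{S}_n$ such that $({\bf n}^1_{\sigma},\dots,{\bf n}^q_{\sigma})$ is a star sequence of vectors; then $S:=VP_{\sigma}$ has the desired property.

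For the combinatorial claim I would argue by sorting. Fix once and for all a total order $\preceq$ on $\C$ (for instance, compare real parts and break ties by imaginary parts) and extend it lexicographically to $\C^q$. To each $i\in\{1,\dots,n\}$ attach the tuple $t_i=(n^1_i,\dots,n^q_i)\in\C^q$ and choose $\sigma\in\mathcal{S}_n$ with $t_{\sigma(1)}\preceq t_{\sigma(2)}\preceq\dots\preceq t_{\sigma(n)}$ (ties broken arbitrarily). For $j\in\{1,\dots,q\}$ declare two positions $p,p'\in\{1,\dots,n\}$ equivalent mod $\equiv_j$ when the first $j$ coordinates of $t_{\sigma(p)}$ and $t_{\sigma(p')}$ agree. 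Since the list $t_{\sigma(1)},\dots,t_{\sigma(n)}$ is $\preceq$-increasing and $\preceq$ is total, each $\equiv_j$-class is a block of consecutive positions and the $\equiv_{j+1}$-classes refine the $\equiv_j$-classes; this nested partition is exactly the skeleton of a star sequence. From it I read off the star sequence of indices $(I_1,\dots,I_q)$ (level $j$ = the list of $\equiv_j$-classes, with $\delta_{\boldsymbol{\alpha}}$ = the number of $\equiv_{j+1}$-subclasses of the class $\boldsymbol{\alpha}$), the multiplicities $k_{\boldsymbol{\beta}}$ = cardinalities of the classes, and the scalars $\lambda_{\boldsymbol{\alpha}^j_p}$ = the common value of the $j$-th coordinate on the $p$-th $\equiv_j$-class; one then checks directly that ${\bf n}^j_{\sigma}=\text{vec}(\lambda_{\boldsymbol{\alpha}^j_1}{\bf 1}_{k_{\boldsymbol{\alpha}^j_1}},\dots,\lambda_{\boldsymbol{\alpha}^j_{\mathfrak{c}(j)}}{\bf 1}_{k_{\boldsymbol{\alpha}^j_{\mathfrak{c}(j)}}})$, that within each $\equiv_{j-1}$-class the attached $j$-th coordinates are pairwise distinct (otherwise the subclasses would merge) while across different $\equiv_{j-1}$-classes they may coincide, and that $\sum_i k_{(\boldsymbol{\beta},i)}=k_{\boldsymbol{\beta}}$. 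Hence $({\bf n}^1_{\sigma},\dots,{\bf n}^q_{\sigma})$ satisfies Definition \ref{definition-principal-vector88}. (Alternatively one can induct on $q$: sort ${\bf n}^1$ into a star vector via Lemma \ref{U-P-234}$(iii)$, cutting $\{1,\dots,n\}$ into blocks on which ${\bf n}^1$ is constant, apply the inductive hypothesis to ${\bf n}^2,\dots,{\bf n}^q$ restricted to each block, and glue the block permutations by a direct sum $\rho_1\oplus\dots\oplus\rho_d$, which leaves ${\bf n}^1$ untouched.)

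I do not expect a real obstacle: the only genuine ingredient, namely that $M_1,\dots,M_q$ are simultaneously diagonalizable, is already supplied by \cite{horn}, and Lemma \ref{U-P-234} guarantees that right multiplication by a permutation matrix keeps us inside $\mathfrak{S}(\mathcal{M})$ while acting on the induced vectors by the same permutation. What requires care is bookkeeping: verifying that the nested interval partition produced by the lexicographic sort literally matches the recursively defined data $(I_1,\dots,I_q)$, $\{k_{\boldsymbol{\beta}}\}$, $\{\lambda\}$ of a star sequence. That verification is routine but notation-heavy, and is the part I would write out most carefully.
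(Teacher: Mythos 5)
Your argument is correct. The reduction via Lemma \ref{U-P-234}$(i)$ to a purely combinatorial claim about arbitrary vectors ${\bf n}^1,\dots,{\bf n}^q$ is exactly what is needed (the same $\sigma$ works for every $M_j$, so $VP_{\sigma}\in\mathfrak{S}(\mathcal{M})$), and the lexicographic sort does produce the nested structure of Definition \ref{definition-principal-vector88}: within an $\equiv_{j-1}$-class the first $j-1$ coordinates are constant, so the restriction of the lexicographic order is governed by the $j$-th coordinate, which makes the $\equiv_j$-subclasses consecutive intervals carrying pairwise distinct scalars, while across different $\equiv_{j-1}$-classes coincidences are allowed, as the definition permits. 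The paper takes a different route in form though not in substance: it inducts on $j$, building a chain $S_1,\dots,S_q$ in $\mathfrak{S}(\mathcal{M})$ with $S_j=S_{j-1}P_{\sigma}$, where $\sigma=\sigma_1\oplus\dots\oplus\sigma_{\mathfrak{c}(j-1)}$ is a direct sum of permutations of the blocks of the star sequence already obtained; each $\sigma_i$ sorts the restriction of $\Delta_{M_j}(S_{j-1})$ to its block into a star vector, and the direct-sum structure guarantees $\Delta_{M_r}(S_j)=\Delta_{M_r}(S_{j-1})$ for $r<j$. This is precisely the parenthetical alternative you sketch. The one-shot sort buys a cleaner separation of the algebraic input from the combinatorics and constructs $\sigma$ in a single pass, at the price of the notation-heavy verification you rightly flag; the paper's induction keeps each step elementary and reuses the $\oplus$-machinery of Appendix \ref{permutation-matrix-22}, which it needs elsewhere (Theorems \ref{commute-set-1001} and \ref{description-s(M)-321}) anyway.
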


\begin{proof}
We prove the existence of $S_j\in \mathfrak{S}(\mathcal{M})$ such that $(\Delta_{M_1}(S_{j})$, ..., $\Delta_{M_j}(S_j))$ is a star sequence of induced vectors by $\{M_1, \dots, M_j\}$, $j\in \{1, \dots, q\}$. 

For $j=1$ we apply Lemma \ref{U-P-234}. We assume that the announced property is valid for $j-1$. If ${\bf m}^j=\Delta_{M_j}(S_{j-1})$, then we write ${\bf m}^j=({\bf m}^{j,\boldsymbol{\alpha}^{j-1}_1}, \dots, {\bf m}^{j,\boldsymbol{\alpha}^{j-1}_{\mathfrak{c}(j-1)}})$  with ${\bf m}^{j,\boldsymbol{\alpha}^{j-1}_i}\in \C^{k_{\boldsymbol{\alpha}^{j-1}_i}}$. For $i\in \{1, \dots, \mathfrak{c}({j-1})\}$ there is $\sigma_i\in \mathcal{S}_{k_{\boldsymbol{\alpha}^{j-1}_i}}$ such that ${\bf m}^{j,\boldsymbol{\alpha}^{j-1}_i}_{\sigma_i}$ is a star vector. We construct $\sigma=\sigma_1\oplus\dots\oplus \sigma_{\mathfrak{c}(j-1)}\in \mathcal{S}_n$ and $S_j=S_{j-1}P_{\sigma}$. By construction we have that $\Delta_{M_j}(S_{j})$ is a star vector and $\Delta_{M_r}(S_{j})=\Delta_{M_r}(S_{j-1})$, for $r<j$.   
\end{proof}

\begin{thm}\label{permutation-vectors-221}
Let $\mathcal{M}=(M_1,\dots,M_q)$ be a sequence of commuting diagonalizable matrices and $S,V\in \mathfrak{S}(\mathcal{M})$. If ${\bf m}^j=\Delta_{M_j}(S)$, ${\bf n}^j=\Delta_{M_j}(V)$, $\forall j\in \{1,\dots, q\}$, then there exists $\sigma\in \mathcal{S}_n$ such that ${\bf n}^j={\bf m}^j_{\sigma}$,  $\forall j\in \{1,\dots, q\}$.
\end{thm}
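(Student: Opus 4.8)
The plan is to reduce to a distinguished diagonalizing matrix whose induced sequence is a \emph{star} sequence, use it as a common ``reference frame'', and then apply Theorem~\ref{description-s(M)-321}(i) twice and compose the resulting permutations.

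First I would invoke Theorem~\ref{existence-principal-induced-444} to fix a matrix $T\in\mathfrak{S}(\mathcal{M})$ for which $({\bf p}^1=\Delta_{M_1}(T),\dots,{\bf p}^q=\Delta_{M_q}(T))$ is a star sequence of induced vectors. Now $({\bf m}^1,\dots,{\bf m}^q)$ and $({\bf n}^1,\dots,{\bf n}^q)$ are both sequences of induced vectors by $\mathcal{M}$, realized by $S$ and $V$ respectively. Applying Theorem~\ref{description-s(M)-321}(i) with $T$ in the role of the distinguished matrix (its induced sequence being a star sequence, the hypothesis is met), we obtain permutations $\tau,\rho\in\mathcal{S}_n$ such that ${\bf m}^j={\bf p}^j_{\tau}$ and ${\bf n}^j={\bf p}^j_{\rho}$ for every $j\in\{1,\dots,q\}$.

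Next I would combine these relations using the composition rule $({\bf m}_{\alpha})_{\beta}={\bf m}_{\alpha\circ\beta}$ for ${\bf m}\in\C^n$ and $\alpha,\beta\in\mathcal{S}_n$, which is immediate from the definition of ${\bf m}_{\sigma}$. From ${\bf m}^j={\bf p}^j_{\tau}$ we get ${\bf p}^j=({\bf m}^j)_{\tau^{-1}}$ (apply $(\cdot)_{\tau^{-1}}$ and use $\tau\circ\tau^{-1}=\mathrm{id}$), hence ${\bf n}^j={\bf p}^j_{\rho}=\big(({\bf m}^j)_{\tau^{-1}}\big)_{\rho}={\bf m}^j_{\tau^{-1}\circ\rho}$. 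Setting $\sigma:=\tau^{-1}\circ\rho\in\mathcal{S}_n$ gives ${\bf n}^j={\bf m}^j_{\sigma}$ for all $j$ simultaneously, which is the claim.

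The only genuine subtlety — and the point I would be most careful about — is that Theorem~\ref{description-s(M)-321} is stated only under the hypothesis that the reference diagonalizing matrix induces a \emph{star} sequence; it cannot be applied directly to $S$ or $V$. This is exactly why the preliminary appeal to Theorem~\ref{existence-principal-induced-444} (to manufacture the star-inducing $T$) is essential. Beyond that, the argument is pure bookkeeping with permutation composition, with no serious calculation involved.
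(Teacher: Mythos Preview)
Your proposal is correct and follows essentially the same route as the paper: invoke Theorem~\ref{existence-principal-induced-444} to obtain a star sequence $({\bf p}^1,\dots,{\bf p}^q)$, apply Theorem~\ref{description-s(M)-321}(i) twice to get permutations $\tau$ and $\rho$ (the paper calls the second one $\mu$), and compose to obtain $\sigma=\tau^{-1}\circ\rho$. Your added verification of the composition rule $({\bf m}_\alpha)_\beta={\bf m}_{\alpha\circ\beta}$ and your remark on why the star hypothesis forces the detour through $T$ are helpful clarifications but not departures from the paper's argument.
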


\begin{proof}
From Theorem \ref{existence-principal-induced-444} there exists $({\bf p}^1, \dots, {\bf p}^q)$ a star sequence of induced vectors. From Theorem \ref{description-s(M)-321} there are $\tau, \mu \in \mathcal{S}_n$ such that, $\forall j\in \{1, \dots, q\}$ we have ${\bf m}^j={\bf p}^j_{\tau}$ and ${\bf n}^j={\bf p}^j_{\mu}$. Consequently, ${\bf n}^j={\bf m}^j_{\tau^{-1}\circ \mu}$.
\end{proof}

\subsection{A method for finding  a star sequence of two induced vectors without a diagonalizing matrix}\label{Gamma-without-S}

We consider $\mathcal{M}:=(A,B)$ a sequence of diagonalizable matrices in $\mathcal{M}_n(\C)$ such that $AB=BA$.  We present a method to find a star sequence of induced vectors by $\mathcal{M}$. This method assumes the possibility to calculate the eigenvalues of a combinations of the matrices $A$ and $B$. 

First, we set the star vector ${\bf a}=\text{vec}(\lambda_1 {\bf 1}_{k_1}, \dots, \lambda_d {\bf 1}_{k_d})$, where $\lambda_1, \dots, \lambda_d$ are the different eigenvalues of $A$ and $k_1, \dots, k_d$ is their algebraic multiplicity. This is an induced vector by $A$. If $({\bf a}, {\bf b})$ and $({\bf a}, \widehat{\bf b})$ are sequences of induced vectors by $(A,B)$, where ${\bf b}=\text{vec}(\boldsymbol{\mathfrak{b}}_1,\dots,\boldsymbol{\mathfrak{b}}_d), \widehat{\bf b}=\text{vec}(\widehat{\boldsymbol{\mathfrak{b}}}_1,\dots,\widehat{\boldsymbol{\mathfrak{b}}}_d)\in \C^{k_1}\times \dots \times \C^{k_d}$, then $\boldsymbol{\mathfrak{b}}_j$ and $\widehat{\boldsymbol{\mathfrak{b}}}_j$ have the same components (possibly in a different order, see Theorem \ref{commute-set-1001}). It is sufficient to find the components of each vector $\boldsymbol{\mathfrak{b}}_j$.

 We compute $b_1, \dots, b_n$ the eigenvalues of $B$ and construct the set 
\begin{equation}\label{set-B-44}
\mathcal{B}:=\left\{\frac{\lambda_s b_i-\lambda_r b_j}{\lambda_r-\lambda_s}\in \C\,|\,i,j\in \{1,\dots, n\}, i\neq j, \,\text{and}\, r,s\in \{1, \dots, d\}, r\neq s\right\}.
\end{equation}

 If $\lambda_q\neq 0$ and $\beta\in \C\backslash \mathcal{B}$, then the eigenvalues of the diagonalizable matrix $\frac{1}{\lambda_q}(AB+\beta A)-\beta I_n$ are the components of the vector
$$\Big(\frac{\lambda_1}{\lambda_q}\boldsymbol{\mathfrak{b}}_1+\frac{\beta (\lambda_1-\lambda_q)}{\lambda_q}{\bf 1}_{k_1}, \dots, \boldsymbol{\mathfrak{b}}_q, \dots, \frac{\lambda_d}{\lambda_q}\boldsymbol{\mathfrak{b}}_d+\frac{\beta (\lambda_d-\lambda_q)}{\lambda_q}{\bf 1}_{k_d}\Big).$$
The components of $\boldsymbol{\mathfrak{b}}_q$ are terms of the sequence $b_1, \dots, b_n$. Because $\beta\in \C\backslash \mathcal{B}$ we deduce that a component of the vector $\frac{\lambda_r}{\lambda_q}\boldsymbol{\mathfrak{b}}_r+\frac{\beta (\lambda_r-\lambda_q)}{\lambda_q}{\bf 1}_{k_r}$, with $q\neq r$, is not a term of the sequence $b_1, \dots, b_n$. We obtain that {\it the components of $\boldsymbol{\mathfrak{b}}_q$ are the common terms of the sequence $(b_1,\dots,b_n)$ and a sequence formed with the eigenvalues of the matrix $\frac{1}{\lambda_q}(AB+\beta A)-\beta I_n$}. 
 
 The above result allows the computation of the components of the vectors $\boldsymbol{\mathfrak{b}}_q$ that correspond to non-zero eigenvalues.  If $\lambda_{q^*}=0$, then the components of the vector $\boldsymbol{\mathfrak{b}}_{q^*}$ are the components that remain by eliminating the components of the vectors $\boldsymbol{\mathfrak{b}}_q$, $q\neq q^*$, among the eigenvalues of $B$.

\section{Examples}

\begin{exmp}
We consider the homogeneous linear matrix equation
$$AXB+2X=O_3,\,\,\,A=\begin{pmatrix} 0 & 1 & 0 \\ 1 & 0 & 0 \\ 0 & 0 & 1 \end{pmatrix},\,\,B=\begin{pmatrix} 1 & -1 & 0 \\ -1 & 1 & 0 \\ 0 & 0 & 2 \end{pmatrix}.$$
This equation has the form \eqref{linear-matrix-equation} with $n=3$, $k=2$, $A_1=A$, $B_1=B$, $A_2=2I_3$, $B_2=I_3$, and $C=O_3$. Since all these matrices are real symmetric matrices we get that they are normal matrices. It is easy to verify that $\mathcal{M}=\{A_1,A_2,B_1,B_2,C\}$ is a commuting set of matrices.
The above equation is consistent and the set of solutions is a vectorial subspace of $\mathcal{M}_n(\C)$. In what follows we use the method presented in Section \ref{Gamma-without-S} to compute a sequence of induced vectors by $(A,B)$. The eigenvalues of $A$ are $\lambda_1=1$ with the algebraic multiplicity $k_1=2$ and $\lambda_2=-1$ with the algebraic multiplicity $k_2=1$. We construct the star vector ${\bf a}=\text{vec}(1,1,-1)$. The eigenvalues of $B$ are $2, 2, 0$ and the set defined in \eqref{set-B-44} is $\mathcal{B}=\{-2,-1\}$. To determine the order of the components of ${\bf b}$ we choose $\beta=0\notin \mathcal{B}$. The first two components of ${\bf b}$ are the common elements of the sequence of eigenvalues of $\frac{1}{\lambda_1}AB$ and of the sequence of eigenvalues of $B$. The matrix $\frac{1}{\lambda_1}AB$ has $0, 2, -2$ as the eigenvalues. If we choose ${\bf b}=\text{vec}(0, 2, 2)$, then $({\bf a}, {\bf b})$ is a sequence of induced vectors by $(A,B)$. It is easy to observe that $({\bf a}^1, {\bf b}^1, {\bf a}^2, {\bf b}^2, {\bf c})$ is a sequence of induced vectors by $(A_1,B_1,A_2,B_2,C)$, where ${\bf a}^1={\bf a}$, ${\bf b}^1={\bf b}$, ${\bf a}^2=\text{vec}(2,2,2)$, ${\bf b}^2=\text{vec}(1,1,1)$, and ${\bf c}=\text{vec}(0,0,0)$.
The relevant matrix defined by this vectors is 
$$\Gamma={\bf a}^1 ({\bf b}^1)^T+{\bf a}^2 ({\bf b}^2)^T=\begin{pmatrix} 2 & 4 & 4 \\ 2 & 4 & 4 \\ 2 & 0 & 0 \end{pmatrix}.$$
We deduce that the dimension of the space of the solutions is equal with 2. 

By computation we obtain that $S=\begin{pmatrix} 1 & 0 & -1 \\ 1 & 0 & 1 \\ 0 & 1 & 0\end{pmatrix}\in \mathfrak{S}(\mathcal{M})$ and a solution of the
 equation has the form 
$X= \begin{pmatrix} \alpha & -\alpha & -\beta \\ -\alpha & \alpha & \beta \\ 0 & 0 & 0\end{pmatrix},\,\,\alpha,\beta\in \C.$

\end{exmp}\medskip

\begin{exmp}\label{continuous-Lyapinov-not-normal-23}
We consider the homogeneous continuous Lyapunov equation 
$$A^{\star}X+XA=O_2,\,\,\,A=\begin{pmatrix} 0 & 1 \\ 0 & 0 \end{pmatrix}.$$
$A$ is not a normal matrix and $0$ is an eigenvalue with the algebraic multiplicity equal to 2. The solutions are $X=\begin{pmatrix} 0 & a \\ -a & b \end{pmatrix}$ with $a,b\in \C$.
The vectorial subspace of the solutions has the dimension equal to 2. The cardinality of the set $\{(r,s)\in \{1,\dots, n\}^2\,|\,\overline{a}_r+a_s= 0\}$ is equal to 4.  This example shows that the result given in Theorem \ref{continuous-Lyapunov}-$(iii)$ can be false when $A$ is not a normal matrix. 
\end{exmp} \medskip

\begin{exmp}\label{discrete-Lyapunov-not-normal-45}
We consider the homogeneous discrete Lyapunov equation 
$$A^{\star}XA-X=O_2,\,\,\,A=\begin{pmatrix} 1 & 1 \\ 0 & 1 \end{pmatrix}.$$
$A$ is not a normal matrix and $1$ is an eigenvalue with the algebraic multiplicity equal to 2. The solutions are $X=\begin{pmatrix} 0 & a \\ -a & b \end{pmatrix}$ with $a,b\in \C$.
The vectorial subspace of the solutions has the dimension equal to 2. The cardinality of the set $\{(r,s)\in \{1,\dots, n\}^2\,|\,\overline{a}_r a_s= 1\}$ is equal to 4.  This example shows that the result given in Theorem \ref{discrete-Lyapunov}-$(iii)$ can be false when $A$ is not a normal matrix. 
\end{exmp} \medskip

\begin{exmp}\label{exmp-stein-1}
We consider the Stein equation
$$AXA-X=I_2, \,\,\,A=\begin{pmatrix} 1 & 1 \\ 0 & 1 \end{pmatrix}.$$
$A$ and $I_2$ commute, but $A$ is not a diagonalizable matrix. Because $A^2-I_2$ is a nilpotent matrix we obtain that  
$\widehat{X}=(A^2-I_2)^{\bf D}I_2=O_2$
and it is not a solution of the above Stein equation. The above Stein equation is consistent having the unique solution $X=\begin{pmatrix} 2 & -3 \\ 0 & 2 \end{pmatrix}$. The attached standard linear matrix equation  is $(A^2-I_2)X=I_2$ and it is inconsistent. 
\end{exmp}\medskip

\begin{exmp}\label{exmp-stein-2}
We consider the Stein equation
$$AXA-2X=C, \,\,\,A=\begin{pmatrix} 1 & 1 \\ 1 & -1 \end{pmatrix},\,\,\,C=\begin{pmatrix} 0 & 1 \\ -1 & 0 \end{pmatrix}.$$
All parameters are normal matrices, but $AC\neq CA$. We have the equality 
$\widehat{X}=(A^2-2I_2)^{\boldsymbol{\dagger}}C=O_2$
and it is not a solution of the above Stein equation. This is a consistent equation and its solutions are $X=\begin{pmatrix} 2a+b-\frac{1}{2} & a-\frac{1}{2} \\ a & b \end{pmatrix}$ with $a,b\in \C$. The attached standard linear matrix equation  is inconsistent. 
\end{exmp}

\appendix 

\section{Permutation matrices}\label{permutation-matrix-22}

A permutation of the set $\{1,2, \dots,n\}$ is a bijection from this set to itself. The set of all permutations is noted by $\mathcal{S}_n$.

If $n=k_1+\dots +k_d$ with $k_1, \dots, k_d\in \mathbb{N}^*$ and $\sigma_1\in \mathcal{S}_{k_1}$, ..., $\sigma_d\in \mathcal{S}_{k_d}$, then $\sigma:=\sigma_{k_1}\oplus\dots\oplus\sigma_{k_d}\in \mathcal{S}_n$ is given by
\begin{equation}\label{sigma-oplus-432}
\sigma(i)=\begin{cases} 
\sigma_1(i) & \text{if} \,\,1\leq i\leq k_1 \\ 
\sigma_2(i)+k_1 & \text{if} \,\,k_1+1\leq i\leq k_1+k_2 \\
\dots & \dots \\
\sigma_d(i)+k_1+\dots+k_{d-1} & \text{if} \,\,k_1+\dots+k_{d-1}+1\leq i\leq n.
\end{cases}
\end{equation}

$P\in \mathcal{M}_n(\C)$ is a permutation matrix if it has exactly one entry of $1$ in each row and each column and all other entries $0$. We note by $\mathcal{P}_n(\C)$ the set of $n\times n$ permutation matrices. The function\footnote{${\bf e}_1$, ..., ${\bf e}_n$ are the vectors of the canonical basis in $\C^n$.} $\sigma\in \mathcal{S}_n\to P_{\sigma}:=({\bf e}_{\sigma(1)},\dots, {\bf e}_{\sigma(n)})\in \mathcal{P}_n(\C)$ is a bijection. The components of $P_{\sigma}$ are $(P_{\sigma})_{ij}=\delta_{i\sigma(j)}$.

If $\sigma, \tau\in \mathcal{S}_n$, then we have the following equality $P_{\sigma \circ \tau}=P_{\sigma}P_{\tau}$. The inverse of $P_{\sigma}$ is $P^{-1}_{\sigma}=P^{\star}_{\sigma}=P_{\sigma^{-1}}$. A permutation matrix is an unitary matrix.

If $\sigma\in \mathcal{S}_n$ and $M=(m_{ij})\in \mathcal{M}_n(\C)$, then we have $(P_{\sigma}M)_{ij}=m_{\sigma^{-1}(i)j}$, $(MP_{\sigma})_{ij}=m_{i\sigma(j)}$, and $(P_{\sigma}^{-1}MP_{\sigma})_{ij}=(P_{\sigma}^{\star}MP_{\sigma})_{ij}=m_{\sigma(i)\sigma(j)}$.

If $\sigma_1\in \mathcal{S}_{k_1}$, ..., $\sigma_d\in \mathcal{S}_{k_d}$, and $n=k_1+\dots +k_d$, then $P_{\sigma}=P_{\sigma_1}\oplus P_{\sigma_2} \oplus \dots \oplus P_{\sigma_d}$, where $\sigma\in \mathcal{S}_n$ is defined by \eqref{sigma-oplus-432}. 

\section{The Moore-Penrose inverse. Drazin inverse. The group inverse}\label{Moore-Penrose-prop}

If $A\in \mathcal{M}_{m\times n}(\C)$, then {\bf the Moore-Penrose inverse} of $A$, see \cite{stoer}, is the unique matrix $A^{\boldsymbol{\dagger}}\in \mathcal{M}_{n\times m}(\C)$ which satisfies the following properties: $A^{\boldsymbol{\dagger}}A=(A^{\boldsymbol{\dagger}}A)^{\star}$, $AA^{\boldsymbol{\dagger}}=(AA^{\boldsymbol{\dagger}})^{\star}$,  $AA^{\boldsymbol{\dagger}}A=A$, and $A^{\boldsymbol{\dagger}}AA^{\boldsymbol{\dagger}}=A^{\boldsymbol{\dagger}}$.
The Moore-Penrose inverse has the following properties: $O_{m\times n}^{\boldsymbol{\dagger}}=O_{n\times m}$, $(\text{diag}(\lambda_1,\dots,\lambda_n))^{\boldsymbol{\dagger}}=\text{diag}(\lambda_1^{\boldsymbol{\dagger}},\dots,\lambda_n^{\boldsymbol{\dagger}})$, $(A^{\star})^{\boldsymbol{\dagger}}=(A^{\boldsymbol{\dagger}})^{\star}$, $A^{\boldsymbol{\dagger}}=A^{-1}$ when $A\in \mathcal{M}^{\text{inv}}_{n}(\C)$. 

If $A\in\mathcal{M}_n(\C)$, then $\text{ind}(A)$ - the index of $A$ -
 is the least nonnegative integer such that $\text{rank}(A^{\text{ind}(A)+1}) =\text{rank}(A^{\text{ind}(A)})$. If $A$ is a diagonalizable matrix, then $\text{ind}(A)\in \{0,1\}$.

{\bf The Drazin inverse} of $A\in\mathcal{M}_n(\C)$ is the unique matrix $A^{\bf D}\in\mathcal{M}_n(\C)$ that satisfies: $A^{\text{ind}(A)+1}A^{\bf D}=A^{\text{ind}(A)}$, $A^{\bf D}AA^{\bf D}=A^{\bf D}$, and $A^{\bf D}A=AA^{\bf D}$.

If $A\in\mathcal{M}_n(\C)$ is invertible, then $A^{\bf D}=A^{-1}$. If ${\bf a}=(a_1, \dots, a_n)\in \C^n$, then $(\text{diag}({\bf a}))^{\bf D}=(\text{diag}({\bf a}))^{\boldsymbol{\dagger}}=\text{diag}(a_1^{\boldsymbol{\dagger}}, \dots, a_n^{\boldsymbol{\dagger}})$.
Every matrix $A\in\mathcal{M}_n(\C)$ can be written in the form $A=S (B\oplus N) S^{-1}$, where $S\in \mathcal{M}^{\text{inv}}_n(\C)$, $B\in \mathcal{M}^{\text{inv}}_m(\C)$ ($m\in \{0,1,\dots,n\}$), and $N\in \mathcal{M}_{n-m}(\C)$ is a nilpotent matrix. If $A$ has the above representation, then $A^{\bf D}=S (B^{-1}\oplus O_{n-m}) S^{-1}$. The Drazin inverse $A^{\bf D}$ is a polynomial in $A$. If $A,B\in\mathcal{M}_n(\C)$ such that $AB=BA$, then $A^{\bf D}B=BA^{\bf D}$ and $A^{\bf D}B^{\bf D}=B^{\bf D}A^{\bf D}$. 

If $A\in\mathcal{M}_n(\C)$ and $W\in\mathcal{M}_n(\C)$ is invertible, then $(WAW^{-1})^{\bf D}=WA^{\bf D}W^{-1}$.

If $A\in\mathcal{M}_n(\C)$ and $\text{ind}(A)\in \{0,1\}$, then $A^{\bf D}$, the Drazin inverse of $A$, is also called {\bf the group inverse} of $A$ and is denoted by $A^{\#}$ (see \cite{mayer}).

\section{Normal matrices}

The matrix $M\in \mathcal{M}_n(\C)$ is a normal matrix if $MM^{\star}=M^{\star}M$, where $M^{\star}\in \mathcal{M}_{n}(\C)$ is the conjugate transpose of $M$. An impressive number of properties of normal matrices are presented in the paper \cite{grone}. A normal matrix is a diagonalizable matrix.

The matrix $M\in \mathcal{M}_n(\C)$ is a normal matrix if and only if there is $U\in \mathfrak{S}(M)\cap\mathcal{U}(n)$, where
$\mathcal{U}(n)=\{U\in \mathcal{M}_n(\C)\,|\,UU^{\star}=U^{\star}U=I_n\}$
is the set of $n$-by-$n$ unitary matrices.
If $\mathcal{S}$ is a set of normal matrices, then
$\mathfrak{S}(\mathcal{S})\cap \mathcal{U}(n)\neq \emptyset$ if and only if $\mathcal{S}$ is a commuting set. 

\begin{lem}\label{normal-Drazin-Moore-Penrose}
If $M$ is a normal matrix, $U\in \mathfrak{S}(M)\cap\mathcal{U}(n)$, and $\Delta_M(U):=\emph{vec}(m_1,\dots, m_n)\in \C^n$, then 
$M^D=M^{\boldsymbol{\dagger}}=U\emph{diag}(m^{\boldsymbol{\dagger}}_1, \dots, m^{\boldsymbol{\dagger}}_n)U^{\star}.$
\end{lem}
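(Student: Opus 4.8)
The plan is to reduce everything to the diagonal case via the unitary conjugation $M = U\,\text{diag}(\mathbf{m})\,U^{\star}$, where $\mathbf{m}=\text{vec}(m_1,\dots,m_n)$, and then invoke the already-recorded behaviour of the Drazin and Moore--Penrose inverses under (unitary) similarity together with their explicit form on diagonal matrices. Concretely: first, since $U\in\mathfrak{S}(M)\cap\mathcal{U}(n)$ we have $U^{-1}=U^{\star}$ and $M = U\,\text{diag}(\mathbf{m})\,U^{-1}$. Apply the identity $(WAW^{-1})^{\bf D}=WA^{\bf D}W^{-1}$ from Appendix \ref{Moore-Penrose-prop} with $W=U$ and $A=\text{diag}(\mathbf{m})$ to get $M^{\bf D}=U\,(\text{diag}(\mathbf{m}))^{\bf D}\,U^{\star}$. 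Then use the Appendix fact $(\text{diag}(\mathbf{m}))^{\bf D}=\text{diag}(m_1^{\boldsymbol{\dagger}},\dots,m_n^{\boldsymbol{\dagger}})$ to obtain the stated formula for $M^{\bf D}$.

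For the Moore--Penrose part, note that $M^{\boldsymbol{\dagger}}=U\,(\text{diag}(\mathbf{m}))^{\boldsymbol{\dagger}}\,U^{\star}$ follows the same way: one checks directly that the matrix $N:=U\,\text{diag}(m_1^{\boldsymbol{\dagger}},\dots,m_n^{\boldsymbol{\dagger}})\,U^{\star}$ satisfies the four defining Penrose equations for $M$, using that $U$ is unitary (so conjugation by $U$ commutes with taking conjugate transpose) and that $\text{diag}(m_1^{\boldsymbol{\dagger}},\dots,m_n^{\boldsymbol{\dagger}})$ is the Moore--Penrose inverse of $\text{diag}(\mathbf{m})$; by uniqueness of the Moore--Penrose inverse, $N=M^{\boldsymbol{\dagger}}$. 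Since $(\text{diag}(\mathbf{m}))^{\bf D}=(\text{diag}(\mathbf{m}))^{\boldsymbol{\dagger}}$ (Appendix \ref{Moore-Penrose-prop}), the two expressions coincide, giving $M^{\bf D}=M^{\boldsymbol{\dagger}}=U\,\text{diag}(m_1^{\boldsymbol{\dagger}},\dots,m_n^{\boldsymbol{\dagger}})\,U^{\star}$.

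There is essentially no obstacle here: every ingredient — the transformation rule for the Drazin inverse under similarity, the diagonal formulas, the equality of Drazin and Moore--Penrose inverses for normal (indeed diagonalizable-with-unitary-diagonalizer) matrices, and the unitarity $U^{-1}=U^{\star}$ — is already stated in the appendices or follows from the definitions. The only mild care needed is in the Penrose-equation verification, where one must use $U^{\star}=U^{-1}$ to move conjugate transposes past $U$; this is routine. If one prefers to avoid even that check, one can instead cite Lemma \ref{normal-Drazin-Moore-Penrose}'s hypothesis that $M$ is normal together with the appendix statement $(\text{diag}(\mathbf{m}))^{\bf D}=(\text{diag}(\mathbf{m}))^{\boldsymbol{\dagger}}$ and the similarity rule applied once, since for a normal matrix $\text{ind}(M)\le 1$ and the Drazin inverse of a matrix that is unitarily similar to a diagonal matrix is again unitarily similar to the (entrywise generalized-inverse) diagonal matrix, which is self-conjugate-transpose-compatible and hence equals the Moore--Penrose inverse.
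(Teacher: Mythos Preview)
Your proposal is correct and follows essentially the same strategy as the paper: reduce to the diagonal case via the unitary diagonalization $M=U\,\mathrm{diag}(\mathbf{m})\,U^{\star}$ and then appeal to uniqueness of the two generalized inverses. The only cosmetic difference is that for the Drazin half you invoke the Appendix similarity rule $(WAW^{-1})^{\mathbf D}=WA^{\mathbf D}W^{-1}$, whereas the paper instead verifies the Drazin axioms for the candidate $X=U\,\mathrm{diag}(m_1^{\boldsymbol{\dagger}},\dots,m_n^{\boldsymbol{\dagger}})\,U^{\star}$ directly (alongside the four Penrose equations, just as you do); both routes are equally valid and equally short.
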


\begin{proof}
Because $M$ is a diagonalizable matrix we have that $\text{ind}(M)\in \{0,1\}$.
If $X=U\text{diag}(m^{\boldsymbol{\dagger}}_1, \dots, m^{\boldsymbol{\dagger}}_n)U^{\star}$, then $MX  =U\text{diag}(m_1m^{\boldsymbol{\dagger}}_1, \dots, m_nm^{\boldsymbol{\dagger}}_n)U^{\star}=XM$, $(MX)^{\star} =U(\text{diag}(m_1m^{\boldsymbol{\dagger}}_1, \dots, m_nm^{\boldsymbol{\dagger}}_n))^{\star}U^{\star}=MX$, $MXM =U\text{diag}(m^2_1m^{\boldsymbol{\dagger}}_1,$ $\dots, m^2_nm^{\boldsymbol{\dagger}}_n)U^{\star}=M$, $XMX =U\text{diag}(m_1(m^{\boldsymbol{\dagger}}_1)^2, \dots, m_n(m^{\boldsymbol{\dagger}}_n)^2)U^{\star}=X$.
We observe that $X$ verifies the definition of the Moore-Penrose inverse and the definition of the Drazin inverse.  From uniqueness of the Moore-Penrose inverse and the Drazin inverse one obtains the announced result. 
\end{proof}

It is easy to observe the following results. 
\begin{lem}\label{prop-normal-Moore}
Let $\mathcal{S}\subset \mathcal{M}_n(\C)$ be a commuting family of normal matrices. If $\alpha\in \C$ and $M,N\in \mathcal{S}$, then $\alpha M$, $M^{\boldsymbol{\dagger}}$, $M+N$, and $MN$ are normal matrices that commute with all matrices in $\mathcal{S}$. Also, we have $(MN)^{\boldsymbol{\dagger}}=M^{\boldsymbol{\dagger}}N^{\boldsymbol{\dagger}}=N^{\boldsymbol{\dagger}}M^{\boldsymbol{\dagger}}$. 
\end{lem}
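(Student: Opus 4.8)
The plan is to exploit simultaneous unitary diagonalization. Since $\mathcal{S}$ is a commuting family of normal matrices, the result quoted just above gives a unitary $U\in\mathfrak{S}(\mathcal{S})\cap\mathcal{U}(n)$ diagonalizing every member of $\mathcal{S}$ at once; write $M=U\text{diag}({\bf m})U^{\star}$ and $N=U\text{diag}({\bf n})U^{\star}$ with ${\bf m}=\Delta_M(U)$, ${\bf n}=\Delta_N(U)$, and $L=U\text{diag}(\boldsymbol{\ell})U^{\star}$ for an arbitrary $L\in\mathcal{S}$. The strategy is then to push every operation appearing in the statement through conjugation by this single unitary $U$, reducing each assertion to an elementary fact about diagonal matrices, for which normality is automatic and commutativity is just the commutativity of diagonal matrices.

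First I would treat $\alpha M$, $M+N$, and $MN$: these equal $U\text{diag}(\alpha{\bf m})U^{\star}$, $U\text{diag}({\bf m}+{\bf n})U^{\star}$, and $U\text{diag}({\bf m}\circ{\bf n})U^{\star}$ respectively, so each is unitarily similar to a diagonal matrix, hence normal; and each commutes with $L$ because the corresponding diagonal factors commute. For $M^{\boldsymbol{\dagger}}$ I would invoke Lemma \ref{normal-Drazin-Moore-Penrose}, which gives $M^{\boldsymbol{\dagger}}=U\text{diag}(m_1^{\boldsymbol{\dagger}},\dots,m_n^{\boldsymbol{\dagger}})U^{\star}$; the very same argument shows $M^{\boldsymbol{\dagger}}$ is normal and commutes with every matrix in $\mathcal{S}$.

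For the identity $(MN)^{\boldsymbol{\dagger}}=M^{\boldsymbol{\dagger}}N^{\boldsymbol{\dagger}}=N^{\boldsymbol{\dagger}}M^{\boldsymbol{\dagger}}$ I would apply Lemma \ref{normal-Drazin-Moore-Penrose} to the normal matrix $MN=U\text{diag}({\bf m}\circ{\bf n})U^{\star}$, obtaining $(MN)^{\boldsymbol{\dagger}}=U\text{diag}((m_1n_1)^{\boldsymbol{\dagger}},\dots,(m_nn_n)^{\boldsymbol{\dagger}})U^{\star}$, and then reduce to the scalar identity $(ab)^{\boldsymbol{\dagger}}=a^{\boldsymbol{\dagger}}b^{\boldsymbol{\dagger}}$ for $a,b\in\C$, which is immediate by cases (if $a,b\neq 0$ both sides are $1/(ab)$; if $ab=0$ both sides are $0$). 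Since diagonal matrices commute, $M^{\boldsymbol{\dagger}}N^{\boldsymbol{\dagger}}$ and $N^{\boldsymbol{\dagger}}M^{\boldsymbol{\dagger}}$ both equal $U\text{diag}(m_1^{\boldsymbol{\dagger}}n_1^{\boldsymbol{\dagger}},\dots,m_n^{\boldsymbol{\dagger}}n_n^{\boldsymbol{\dagger}})U^{\star}$, which matches the expression just found, finishing the proof.

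There is essentially no hard step here; the only points requiring a little care are that $\mathcal{S}$ may be infinite, so one must use the simultaneous-diagonalization statement for arbitrary commuting families of normal matrices rather than for a finite set, and that the \emph{same} unitary $U$ must be used uniformly for $M$, $N$, $M^{\boldsymbol{\dagger}}$, $MN$ and every $L\in\mathcal{S}$, so that all the relevant diagonal factors live in one common eigenbasis.
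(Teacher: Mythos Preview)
Your proposal is correct and is exactly the argument the paper has in mind: the paper omits any proof, prefacing the lemma with ``It is easy to observe the following results,'' and the surrounding context (simultaneous unitary diagonalization of commuting normal families and Lemma~\ref{normal-Drazin-Moore-Penrose}) makes clear that the intended justification is precisely the reduction to diagonal matrices via a single common unitary $U$ that you carry out.
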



\begin{thebibliography}{99}
\bibitem{sari} G. Ariyanti, A.E.R.M. Sari, The Discrete Lyapunov Equation of The Orthogonal Matrix in Semiring, Eur. J. Pure Appl. Math., 16 (2) (2023), pp. 784-790, doi:10.29020/nybg.ejpam.v16i2.4712.
\bibitem{gajic} Z. Gaji\'c, M.T.J. Quareshi, Lyapunov Matrix Equation in System Stability and Control, Vol. 195 in Mathematics in Science and Engineering, Academic Press, 1995.
\bibitem{gil} M. Gil’, Solution Estimates for the Discrete Lyapunov Equation in a Hilbert Space and Applications to Difference Equations, Axioms, 8(1), 20 (2019), doi:10.3390/axioms8010020.
\bibitem{golub} G. Golub, S. Nash, C. Van Loan, A Hessenberg-Schur method for the problem $AX + XB= C$, IEEE Transactions on Automatic Control, 24 (6) (1979), pp. 909-913, doi: 10.1109/TAC.1979.1102170.
\bibitem{grone} R. Grone, C.R. Johnson, E.M. Sa, H. Wolkowicz, Normal Matrices, Linear Algebra Appl., 87 (1987), pp. 213-225, doi:10.1016/0024-3795(87)90168-6.
\bibitem{horn} R.A. Horn, C.R. Johnson, Matrix Analysis, Second Edition, Cambridge University Press, 2013.
\bibitem{jbilou} K. Jbilou, A. Messaoudi, A Computational Method for Symmetric Stein Matrix Equations, In: Van Dooren, P., Bhattacharyya, S., Chan, R., Olshevsky, V., Routray, A. (eds) Numerical Linear Algebra in Signals, Systems and Control. Lecture Notes in Electrical Engineering, vol 80. Springer, Dordrecht, (2011), pp. 295-311, doi:10.1007/978-94-007-0602-6$\_$14
\bibitem{lancaster} P. Lancaster, Explicit solutions of linear matrix equations, SIAM Review, 12 (4) (1970), pp. 544-566, doi:10.1137/1012104
\bibitem{mayer} C.D.Jr. Mayer, The role of the group generalized inverse in the theory of finite Markov chains, SIAM Review, 17 (3) (1975), pp. 443-464, doi:10.1137/1017044
\bibitem{parks}  P.C. Parks, A. M. Lyapunov's stability theory-100 years on, IMA Journal of Mathematical Control and Information, 9 (4) (1992), pp: 275–303, doi:10.1093/imamci/9.4.275
\bibitem{simoncini} V. Simoncini, Computational Methods for Linear Matrix Equations, SIAM Review, 58 (3) (2016), pp. 377-441, doi:10.1137/130912839
\bibitem{stoer} J. Stoer, R. Bulirsch,  Introduction to Numerical Analysis (3rd ed.), Berlin, New York: Springer-Verlag, 2002.
\bibitem{sylvester} J.J. Sylvester, Sur l'equations en matrices $px = xq$, Comptes Rendus Acad. Sci.
Paris, 99:67-71 and  115-116 (1884).
\bibitem{zhang} J. Zhang, H. Kang, S. Li, Matrix iteration algorithms for solving the generalized Lyapunov matrix equation, Adv Differ Equ 2021, 221 (2021), doi:10.1186/s13662-021-03381-1
\bibitem{xu} Xiangjian Xu, Qing-Wen Wang, On the solutions of a class of tensor equations, Linear Multilinear A., 70:21 (2022), pp. 6141-6154, doi:10.1080/03081087.2021.1948492
\end{thebibliography}
\end{document}